\setlist[enumerate]{label=(\alph*)}
\newcommand{\Z}{\mathbb{Z}}
\newcommand{\Hom}{\operatorname{Hom}}
\newcommand{\tr}{\operatorname{tr}}
\newcommand{\inv}{^{-1}}
\newcommand{\End}{\operatorname{End}}
\newcommand{\Res}{\operatorname{Res}}
\newcommand{\Ind}{\operatorname{Ind}}
\newcommand{\id}{\operatorname{id}}
\newcommand{\stab}{\operatorname{stab}}
\newcommand{\calA}{\mathcal{A}}
\newcommand{\calB}{\mathcal{B}}
\newcommand{\calC}{\mathcal{C}}
\newcommand{\calF}{\mathcal{F}}
\newcommand{\calO}{\mathcal{O}}
\newcommand{\calP}{\mathcal{P}}
\newcommand{\calX}{\mathcal{X}}
\newcommand{\triv}{\mathbf{triv}}
\newcommand{\catmod}{\mathbf{mod}}
\newcommand{\Br}{\operatorname{Br}}
\newcommand{\br}{\operatorname{br}}
\newcommand{\Ch}{\operatorname{Ch}}
\newtheorem{theorem}{Theorem}[section]
\newtheorem{lemma}[theorem]{Lemma}
\newtheorem{prop}[theorem]{Proposition}
\newtheorem{corollary}[theorem]{Corollary}
\newtheorem*{theorem*}{Theorem}
\theoremstyle{remark}
\newtheorem{remark}[theorem]{Remark}
\theoremstyle{definition}
\newtheorem{definition}[theorem]{Definition}
    \title{Brauer pairs for splendid Rickard equivalences}
    \author{Jadyn V. Breland}
    \address{Department of Mathematics, University of California Santa Cruz, 
    Santa Cruz, CA 95064}
    \email{jbreland@ucsc.edu}
    \author{Sam K. Miller}
    \address{Department of Mathematics, University of California Santa Cruz, 
    Santa Cruz, CA 95064}
    \email{sakmille@ucsc.edu}
    \subjclass[2020]{20C05, 20C20, 19A22, 20J05} %required
    \keywords{Brauer pair, splendid Rickard equivalence, $p$-permutation equivalence, block theory, abelian defect group conjecture} %optional
\begin{document}
	
	\maketitle
     \begin{abstract}
        We define the notion of a Brauer pair of a chain complex, extending the notion of a Brauer pair of a $p$-permutation module introduced by Boltje and Perepelitsky. In fact, the Brauer pairs of a splendid Rickard equivalence $C$ coincide with the set of Brauer pairs of the corresponding $p$-permutation equivalence $\Lambda(C)$ induced by $C$. As a result, we derive structural results for splendid Rickard equivalences that correspond to known structural properties for $p$-permutation equivalences. In particular, we show splendid Rickard equivalences induce local splendid Rickard equivalences between normalizer block algebras as well as centralizer block algebras.  
    \end{abstract}

    \section{Introduction}

    Brou\'e's abelian defect group conjecture, one of the hallmark conjectures of modular representation theory, predicts an equivalence between a block with abelian defect group and a corresponding ``local'' block associated to the normalizer of the defect group. Moreover, these equivalences should induce ``local equivalences;'' for these block algebras, there are corresponding pairs of blocks associated to centralizers which are also equivalent. 
    
    Historically, these equivalences were first predicted by Brou\'e to exist either as \textit{isotypies}, a bouquet of character-theoretic equivalences called \textit{perfect isometries} for pairs of local subgroups, or as derived equivalences, which then induced a perfect isometry. However, Brou\'e suspected that isotypies were the shadow of a categorical equivalence from which all of the components of the isotypy are induced. Rickard refined this conjecture in \cite{Ri96} by introducing ``splendid'' derived equivalences, which are now often called splendid Rickard equivalences. These equivalences are chain complexes of $p$-permutation bimodules which have ``diagonal'' vertices, and induce not only derived equivalences, but equivalences between homotopy categories of $p$-permutation modules for the block algebras as well. In fact, recent work of Balmer and Gallauer in \cite{BG23a} demonstrates that these homotopy categories of $p$-permutation modules admit the corresponding derived module categories as Verdier quotients. Rickard's key insight was that $p$-permutation is the correct property to require for the existence of equivalences between local subgroups. In this case, the Brauer construction, a common construction when working with $p$-permutation modules, induces the local equivalences. Such complexes were later extended to the language of (almost) source algebras by Linckelmann in \cite{Li98}.
		
	In \cite{BoXu07}, Boltje and Xu introduced $p$-permutation equivalences, a type of equivalence for blocks which exists on the representation ring level. Such equivalences live between isotypies and splendid Rickard equivalences, in the sense that in order to induce an isotypy from a splendid Rickard equivalence, one first constructs a corresponding $p$-permutation equivalence by taking the Lefschetz invariant of the chain complex, i.e. taking an alternating sum of terms. Much of the theory of splendid Rickard equivalences and $p$-permutation equivalences should be expected to coincide - for instance, to induce local equivalences between centralizer block algebras for both types of equivalences, one applies the Brauer construction in the same manner. However, a splendid Rickard equivalence has additional structure that a $p$-permutation equivalence does not, those of differentials and grading. 
 
    The earliest notions of $p$-permutation equivalence in \cite{BoXu07} made restrictive assumptions, namely that the blocks have a common defect group and that certain block fusion systems are equivalent. Boltje and Perepelistky in \cite{BoPe20} gave a wider definition for $p$-permutation equivalences, allowing for direct sums of blocks, removing the assumption of a common defect group, and allowing the vertices of any indecomposable summand to be ``twisted diagonal.'' Under these looser assumptions, it is initially unclear how one would obtain local equivalences, or if the stricter structural properties of a $p$-permutation equivalence would hold in the case of block equivalences. The machinery of Brauer pairs was necessary to resolve these questions, and in fact, many of the structural invariants, such as blocks having isomorphic fusion systems, continue to hold. By applying the Brauer construction to a $p$-permutation equivalence at certain Brauer pairs, one obtains a local $p$-permutation equivalence, leading to the notion of a Brauer pair of a $p$-permutation equivalence. If $A$ and $B$ are direct summands of group algebras $kG$ and $kH$ and $\gamma$ is a $p$-permutation equivalence for $A$ and $B$, we say the pair $(\Delta(P,\phi,Q), e\otimes f^*)$, where $\Delta(P, \phi, Q) \leq G\times H$ is a twisted diagonal subgroup, $e$ a block of $kC_G(P)$, and $f$ a block of $kC_H(Q)$, is a \textit{$\gamma$-Brauer pair} if $e\gamma(\Delta(P,\phi, Q))f \neq 0$. 
    
    Boltje and Perepelitsky developed this theory further, deducing the structure of the poset of Brauer pairs for $p$-permutation equivalences. Moreover, they showed that the local equivalences induced extend beyond blocks of centralizer group algebras - they may be extended to stronger equivalences for normalizer blocks as well. A key observation was that applying the Brauer construction to a $p$-permutation equivalence does not immediately lead to a centralizer $p$-permutation equivalence, but rather a module which is ``invertible'' under an extended tensor product. This suggests that there is more structure which is missed by only considering local centralizer $p$-permutation equivalences. 

    The goal of this paper is adapting much of the theory in \cite{BoPe20} to splendid Rickard equivalences. We adopt a similarly flexible definition for splendid Rickard equivalences, allowing for twisted diagonal vertices and for the equivalence to be between sums of blocks. We then introduce the notion of a Brauer pair for a splendid Rickard equivalence. Given a splendid Rickard equivalence $C$ for direct summands $A$ and $B$ of group algebras $kG$ and $kH$, respectively, we say $(\Delta( P, \phi, Q), e\otimes f^*)$ is a $C$-Brauer pair if $eC(\Delta(P,\phi, Q))f$ is a noncontractible chain complex. Our main result is the following, which is Corollary \ref{mainresult}: 

	\begin{theorem}
		Let $C$ be a splendid Rickard equivalence between $A$ and $B$ and let $\gamma:= \Lambda(C)$ be its corresponding $p$-permutation equivalence. Then $\mathcal{BP}(C) = \mathcal{BP}(\gamma)$. In particular, 
  \begin{enumerate}
    \item $\mathcal{BP}(C)$ is a $G\times H$-stable ideal in the poset of $A\otimes B^*$-Brauer pairs. 
      \item If $A$ and $B$ are blocks, then any two maximal $C$-Brauer pairs are $G\times H$-conjugate.
      \item The following are equivalent for $(\Delta(P,\phi,Q),e\otimes f^*) \in \calB\calP(C)$:
      \begin{enumerate}[label=(\roman*)]
          \item $(\Delta(P,\phi,Q),e\otimes f^*)$ is a maximal $C$-Brauer pair.
          \item $(P,e)$ is a maximal $A$-Brauer pair.
          \item $(Q,f)$ is a maximal $B$-Brauer pair.
      \end{enumerate}
      \item Suppose $A$ and $B$ are blocks and $(\Delta(D,\phi,E),e_D\otimes f_E^*)$ is a maximal $C$-Brauer pair. Let $\mathcal{A}$ denote the fusion system of $A$ associated to $(D,e_D)$ and let $\mathcal{B}$ denote the fusion system of $B$ associated to $(E,f_E)$.  Then $\phi:E \to D$ is an isomorphism of the fusion systems $\mathcal{B}$ and $\mathcal{A}$.
    \end{enumerate}
	\end{theorem}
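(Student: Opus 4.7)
The plan is to reduce the corollary to a single key equality, namely $\mathcal{BP}(C) = \mathcal{BP}(\gamma)$, since the four numbered consequences are the precise chain-complex analogs of structural results that Boltje and Perepelitsky proved for $p$-permutation equivalences in \cite{BoPe20}. Once the equality of the two sets of Brauer pairs is established, applying their theorems to $\gamma = \Lambda(C)$ immediately yields the $G \times H$-stable ideal structure in (1), the conjugacy of maximal pairs in (2), the three-way equivalence between maximality of $C$-, $A$-, and $B$-Brauer pairs in (3), and the fusion system isomorphism in (4). Thus the corollary is really a packaging statement, and the substance of the proof lies entirely in the identification $\mathcal{BP}(C) = \mathcal{BP}(\gamma)$.

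To prove this equality, I would first verify that the Lefschetz invariant $\Lambda$ commutes with the Brauer construction and with left and right multiplication by block idempotents. This reduces to the observation that all three operations are additive functors that respect the decomposition of $p$-permutation modules by vertex-source pairs, so that
\[
\Lambda\bigl(eC(\Delta(P,\phi,Q))f\bigr) \;=\; e\,\gamma(\Delta(P,\phi,Q))\,f
\]
holds in the appropriate trivial source ring. Consequently, $(\Delta(P,\phi,Q), e \otimes f^*)$ lies in $\mathcal{BP}(\gamma)$ if and only if the right-hand side is nonzero, and the task becomes showing that the chain complex $X := eC(\Delta(P,\phi,Q))f$ is contractible if and only if $\Lambda(X) = 0$.

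The forward direction is immediate: a contractible complex has vanishing Lefschetz class. The reverse is the main obstacle, because a generic bounded complex of $p$-permutation modules need not be contractible even when its alternating sum in the trivial source ring vanishes. The key input will be a structural dichotomy for $X$: either $X$ is contractible, or $X$ is itself a splendid Rickard complex between the Brauer correspondent direct summands of $kC_G(P)e$ and $kC_H(Q)f$. This is the chain-complex analog of the fact that the Brauer construction of a $p$-permutation equivalence is either zero or invertible under the extended tensor product, and I expect its verification to be the technical heart of the paper. It should follow from carefully tracking how indecomposable summands and their twisted-diagonal vertices behave under the Brauer construction, extending Rickard's classical local-to-global correspondence \cite{Ri96} to the looser framework with twisted diagonal vertices and sums of blocks.

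Granting this dichotomy, the argument closes quickly: if $X$ is not contractible, then $X$ is a splendid Rickard complex, so its Lefschetz invariant is a (nonzero) $p$-permutation equivalence between nontrivial block summands, hence $\Lambda(X) \neq 0$. Combined with the forward direction, this establishes the contractibility criterion, and hence $\mathcal{BP}(C) = \mathcal{BP}(\gamma)$. Statements (1)--(4) then follow by direct citation of the corresponding results in \cite{BoPe20}, with the fusion system isomorphism in (4) being an immediate translation of Boltje and Perepelitsky's theorem on fusion systems attached to maximal $\gamma$-Brauer pairs.
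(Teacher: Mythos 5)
The architecture of your proposal is right, and in fact mirrors the paper's: you reduce the whole corollary to the set-theoretic equality $\mathcal{BP}(C) = \mathcal{BP}(\gamma)$, note that the inclusion $\mathcal{BP}(\gamma) \subseteq \mathcal{BP}(C)$ is free (contractible complexes have vanishing Lefschetz invariant), and identify that the content lies in showing the reverse inclusion via a dichotomy: $C(\omega)$ is either contractible or a local splendid Rickard complex. The citation strategy for (1)--(4) once the equality is in hand is also exactly the paper's. What you do not supply, however, is a proof of the dichotomy --- you defer it as ``the technical heart of the paper'' and offer only the vague suggestion of ``tracking how indecomposable summands and their twisted-diagonal vertices behave under the Brauer construction.'' That is where the actual mathematics lives, and your sketch does not capture it.

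Concretely, the paper splits the dichotomy into two separate statements, and neither is proved the way you suggest. For the ``noncontractible $\Rightarrow$ splendid'' half (Theorem~\ref{thm:localequivalences}), one applies the Brauer construction at $\Delta(P)$ to the homotopy equivalence $C\otimes_B C^* \simeq A$, uses the block-wise decomposition of $(M\otimes_{\mathcal{O}H} N)(\Delta(P))$ from Theorem~\ref{7.5}(d), and leverages the indecomposability of the bimodule $k[C_G(P)]e$ together with Lemma~\ref{lem:contractible tensor product of complexes} to isolate a single surviving summand. For the ``not a $\gamma$-Brauer pair $\Rightarrow$ contractible'' half (Theorem~\ref{thm:not a braur pair->contractible.}), one picks a $\gamma$-Brauer pair $\lambda$ over the same right half $(Q,f)$, tensors the already-constructed local splendid Rickard complex $\Ind_X^{I\times J}(C(\lambda))$ against $\Ind_Y^{J\times K}(C(\omega)^*)$, expands via Bouc's formula (Theorem~\ref{bouciso}), and shows each term vanishes by arguing $A(\lambda \circ \omega_t^{\circ}) = 0$ --- this uses the Brauer pair structure of block algebras as $(A,A)$-bimodules, not a vertex/source analysis as you propose. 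Finally, one peels off the desired contractibility via Lemma~\ref{lem:contractible induction}. A further point you elide: the definition of a $C$-Brauer pair demands noncontractibility over the full normalizer $k[N_{I\times J}(\Delta(P,\phi,Q))](e\otimes f^*)$, not merely over the centralizer bimodule category $(k[C_G(P)]e, k[C_H(Q)]f)$, so any argument that proves contractibility only after restriction to $C_G(P)\times C_H(Q)$ does not close the hard direction; the proof of Theorem~\ref{thm:not a braur pair->contractible.} is designed precisely to deliver contractibility at the correct, larger, group. You also omit the preliminary reduction from sums of blocks to single blocks via Theorem~\ref{thm:breaking up a splendid complex}. In short: right skeleton, but the two load-bearing lemmas are left unproven and your heuristic for them is not the actual mechanism.
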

 
    These results were deduced by Boltje and Perepelitsky in \cite{BoPe20} in the context of Brauer pairs for $p$-permutation equivalences; our main contribution is showing that the notion of Brauer pairs agree. Part (d) recovers a classical result of Puig; see for instance \cite{Pu99}. We deduce that splendid Rickard equivalences induce equivalences at the normalizer level as well. This is Corollary \ref{normalizerequivalence}.

    \begin{theorem}
        Let $C$ be a splendid Rickard equivalence for blocks $A\subseteq kG$ and $B\subseteq kH$ and let $(\Delta(P,\phi,Q), e\otimes f^*)$ be a $C$-Brauer pair. Set $I=N_G(P,e)$ and $J=N_H(Q,f)$ and let $\hat{e}$ and $\hat{f}$ denote the unique block idempotents of $k[N_G(P)]$ and $k[N_H(Q)]$ which cover $e$ and $f$, respectively. Then, 
		\[\hat{e}k[N_G(P)]e \otimes_{kI} \Ind^{I\times J}_{N_{I \times J}(\Delta(P,\phi, Q))}\big(eC(\Delta(P, \phi, Q))f\big) \otimes_{kJ} fk[N_H(Q)]\hat{f} \]
		is a splendid Rickard equivalence for block algebras $k[N_G(P)]\hat{e}$ and $k[N_H(Q)]\hat{f}$. 	
    \end{theorem}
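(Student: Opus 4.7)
The plan is to build the claimed complex in three stages---Brauer construction at the pair $(\Delta(P,\phi,Q),e\otimes f^*)$, induction from $N_{I\times J}(\Delta(P,\phi,Q))$ up to $I\times J$, and tensoring with the Morita bimodules $\hat ek[N_G(P)]e$ and $fk[N_H(Q)]\hat f$---and then to verify at each stage that we retain the two structural requirements of a splendid Rickard complex: termwise $p$-permutation with twisted diagonal vertices, and invertibility up to chain homotopy for the blocks $k[N_G(P)]\hat e$ and $k[N_H(Q)]\hat f$. The identification $\calB\calP(C)=\calB\calP(\gamma)$ from the main theorem is what allows the bookkeeping developed for the $p$-permutation equivalence $\gamma$ at the normalizer level in \cite{BoPe20} to be transported directly to the chain complex $C$.

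For the structural part, I would use that the Brauer functor $-(\Delta(P,\phi,Q))$ sends a $p$-permutation $k[G\times H]$-module with twisted diagonal vertex to a $p$-permutation bimodule over the quotient $N_{G\times H}(\Delta(P,\phi,Q))/\Delta(P,\phi,Q)$ with vertex of the corresponding twisted diagonal shape, and this commutes with differentials. Left multiplication by $e$ and right multiplication by $f$ restricts the resulting complex to bounded $(kI,kJ)$-bimodules whose vertices remain twisted diagonal. Induction to $I\times J$ preserves the class, and finally the two flanking bimodules $\hat ek[N_G(P)]e$ and $fk[N_H(Q)]\hat f$ are themselves $p$-permutation bimodules with twisted diagonal vertices (this is the standard Morita bimodule for Brauer's First Main Theorem), so the full complex $D$ is of the required type.

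For the Rickard property, write $D$ for the assembled complex and verify $D\otimes_{k[N_H(Q)]\hat f}D^{*}\simeq k[N_G(P)]\hat e$, with the symmetric relation following the same argument. Since $\hat ek[N_G(P)]e$ is a Morita bimodule between $k[N_G(P)]\hat e$ and $kIe$, the outermost tensor factors collapse, and the task reduces to a homotopy equivalence at the $(kI,kJ)$-bimodule level. The starting point is the Rickard invertibility $C\otimes_{kH}C^{*}\simeq A$ of $C$: the Brauer construction is symmetric monoidal up to homotopy on bounded $p$-permutation complexes and additive on idempotent decomposition, so applying $-(\Delta(P,\phi,Q))$ and cutting by $e\otimes e^*$ yields a homotopy equivalence at the level of centralizer-block bimodules, and a standard Mackey computation against the induction functor $N_{I\times J}(\Delta(P,\phi,Q))\hookrightarrow I\times J$ extends this to the required statement over $kI$.

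The principal obstacle will be this final lifting step from centralizer to normalizer. The Brauer functor naturally produces data over $kC_G(P)$ and $kC_H(Q)$, while the claim lives over $k[N_G(P)]$ and $k[N_H(Q)]$; bridging the two requires simultaneously extending the block $e$ of $kC_G(P)$ to the covering block $\hat e$ of $k[N_G(P)]$ via the Morita bimodule $\hat ek[N_G(P)]e$, and correctly matching the orbit sum produced by induction from $N_{I\times J}(\Delta(P,\phi,Q))$ to the diagonal action implicit in $\hat e$. I expect this orbit matching to be the most delicate bookkeeping; I would handle it using the control-of-fusion item (d) of the main theorem, which guarantees that $\phi$ induces a fusion system isomorphism, so that the double coset sums on the $G$- and $H$-sides align and the two inductions can be combined into the required homotopy equivalence with $kIe$ concentrated in degree zero.
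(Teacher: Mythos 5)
Your overall architecture matches the paper's: first recognize that
\[
\Ind^{I\times J}_{N_{I\times J}(\Delta(P,\phi,Q))}\bigl(eC(\Delta(P,\phi,Q))f\bigr)
\]
is a splendid Rickard complex for $kIe$ and $kJf$, then transport it across the Morita bimodules $\hat e k[N_G(P)]e$ and $fk[N_H(Q)]\hat f$. The paper's proof is essentially two sentences precisely because the first step is Theorem~\ref{thm:localequivalences} with $S=I$ and $T=J$, and the second step is Brauer's First Main Theorem Morita equivalence (cf.\ Nagao--Tsushima), which trivially preserves the splendid Rickard property. Your structural observations about twisted diagonal vertices being preserved under Brauer construction, induction, and tensoring with the Morita bimodules are correct.

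Where you go wrong is in the last paragraph. You identify the "orbit matching" in the Mackey/induction step as the principal obstacle and propose to resolve it via item (d) of the main theorem, the fusion-system isomorphism $\phi:\calB\to\calA$. That tool is not available here: item (d) is stated only for a \emph{maximal} $C$-Brauer pair, whereas the corollary you are proving allows $(\Delta(P,\phi,Q),e\otimes f^*)$ to be an arbitrary $C$-Brauer pair. The ingredient that actually makes the Mackey double-coset sum collapse to a single term is Proposition~\ref{prop:iso determined by a brauer pair}: for any $\gamma$-Brauer pair the conjugation data give a canonical isomorphism $I/C_G(P)\cong J/C_H(Q)$, which forces $p_1(Y)=S$ and $p_2(Y)=T$ for $Y=N_{S\times T}(\Delta(P,\phi,Q))$, so that $[p_2(Y)\backslash T/p_1(Y^\circ)]$ has one element. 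That is exactly what the proof of Theorem~\ref{thm:localequivalences} uses and is why the induced complex is invertible over $kIe$ and $kJf$. Once you have that, the Morita step is pure transport and needs no further orbit bookkeeping; you should cite Theorem~\ref{thm:localequivalences} directly rather than re-derive it with the wrong auxiliary result.
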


    Finally, we deduce that the vertices of the bimodules which appear in an indecomposable splendid Rickard equivalence $C$ are governed by any single maximal Brauer pair for $C$. This is Corollary \ref{bpswhichappear}.

    \begin{theorem}
        Let $C$ be an indecomposable splendid Rickard equivalence for block algebras $A\subseteq kG$ and $B\subseteq kH$ and let $(\Delta(D,\phi,E),e\otimes f^*)$ be a maximal $C$-Brauer pair. Every trivial source bimodule occurring in $C$ has a maximal Brauer pair contained in $(\Delta(D,\phi,E),e\otimes f^*)$. In particular, the set consisting of all Brauer pairs for every trivial source bimodule appearing in $C$ and the set of $C$-Brauer pairs coincide. 
    \end{theorem}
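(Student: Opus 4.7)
The plan is to leverage Corollary \ref{mainresult}, which identifies $\calB\calP(C) = \calB\calP(\gamma)$ for $\gamma = \Lambda(C)$, and to mirror the analogous statement of Boltje and Perepelitsky for $p$-permutation equivalences. I would prove the equality of the two sets by verifying both inclusions.

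First, I would dispose of the easy inclusion $\calB\calP(C) \subseteq \bigcup_M \calB\calP(M)$ (the union ranging over indecomposable trivial source summands $M$ of some $C_i$): if $(\Delta(P,\phi,Q), e \otimes f^*) \in \calB\calP(C)$, then $eC(\Delta(P,\phi,Q))f$ is noncontractible, so some term $eC_i(\Delta(P,\phi,Q))f$ is nonzero, and any indecomposable summand $M$ of $C_i$ with $eM(\Delta(P,\phi,Q))f \neq 0$ has $(\Delta(P,\phi,Q), e \otimes f^*) \in \calB\calP(M)$.

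For the reverse inclusion, the ideal property of $\calB\calP(C)$ from Corollary \ref{mainresult}(a), together with the corresponding ideal structure of each $\calB\calP(M)$, reduces the task to showing that every maximal Brauer pair of every indecomposable trivial source summand $M$ of some $C_i$ belongs to $\calB\calP(C)$. For such an $M$, the vertex is of the twisted-diagonal form $V = \Delta(P,\phi,Q)$ because $C$ is splendid; fix a maximal $M$-Brauer pair $(V, e \otimes f^*)$. Naturality of the Brauer construction applied to the bimodule structure of $M$ gives $e \cdot \br_P(1_A) = e$ and $\br_Q(1_B) \cdot f = f$, so $(P,e)$ and $(Q,f)$ are $A$- and $B$-Brauer pairs, respectively; extend them to maximal Brauer pairs $(D, e_D) \geq (P,e)$ and $(E, f_E) \geq (Q,f)$. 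The fusion-system isomorphism supplied by Corollary \ref{mainresult}(d) then yields an isomorphism $\psi: E \to D$ with $\psi|_Q = \phi$, so that $(\Delta(D,\psi,E), e_D \otimes f_E^*)$ is a Brauer pair of $A \otimes B^*$ lying above $(V, e \otimes f^*)$; by Corollary \ref{mainresult}(c) this is in fact a maximal $C$-Brauer pair, and the ideal property yields $(V, e \otimes f^*) \in \calB\calP(C)$.

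The main obstacle will be the construction of $\psi$ extending $\phi$: this requires a fusion-theoretic argument aligning the maximal Brauer pair extensions on the $G$- and $H$-sides, presumably via an Alperin-style fusion theorem combined with the fusion isomorphism already in hand. The \emph{in particular} conclusion is then immediate from the two inclusions together with the ideal properties of $\calB\calP(C)$ and of each $\calB\calP(M)$.
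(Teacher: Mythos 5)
Your easy inclusion $\calB\calP(C)\subseteq\bigcup_M\calB\calP(M)$ is fine, and matches the second half of the paper's proof. But the hard inclusion has a genuine gap at exactly the step you flag: you assert that the fusion isomorphism $\psi:E\to D$ coming from a maximal $C$-Brauer pair can be chosen so that $\psi|_Q=\phi$. Nothing in Corollary \ref{mainresult}(d) or in fusion theory guarantees this. The vertex $\Delta(P,\phi,Q)$ of an arbitrary summand $M$ of some $C_i$ is \emph{a priori} unrelated to the fusion isomorphism carried by a maximal $C$-Brauer pair. Indeed, if such a $\psi$ existed, then $(\Delta(P,\phi,Q),e\otimes f^*)\leq(\Delta(D,\psi,E),e_D\otimes f_E^*)$ would exhibit it as a $C$-Brauer pair by the ideal property, so the existence of this extension is precisely \emph{equivalent} to the statement you are trying to prove; the argument is circular at this point.

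A second, telling symptom is that your argument never uses the indecomposability of $C$. This hypothesis is essential: if one adds to $C$ a contractible two-term complex $0\to N\xrightarrow{\id}N\to 0$ where $N$ has a twisted-diagonal vertex $\Delta(P,\phi',Q)$ with $\phi'$ incompatible with the block fusion, the resulting complex $C'$ is homotopy equivalent to $C$ (so $\calB\calP(C')=\calB\calP(C)$), but $N$ has Brauer pairs not in $\calB\calP(C)$. So the conclusion genuinely fails without indecomposability, and any correct proof must use it. The paper's route is quite different and inherently homological: it proves a general theorem (using Lemma \ref{webblemma} and Lemma \ref{brauerconstructionforbrauerpairsprops}) showing that if $f\cdot C(Q)$ is acyclic for every Brauer pair $(Q,f)$ outside a $G$-stable ideal $\calX$, then $C$ is homotopy equivalent to a complex $D$ all of whose terms have maximal Brauer pairs in $\calX$; the acyclicity hypothesis is supplied by Theorem \ref{thm:not a braur pair->contractible.}, the ideal $\calX=\calB\calP(C)$ works by Corollary \ref{mainresult}(a), and indecomposability is invoked at the very end to pass from $D$ to $C$. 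The mechanism is splitting off contractible summands by exploiting split surjectivity of differentials detected at Brauer pairs --- not fusion. I would recommend abandoning the fusion-extension strategy and studying how the differentials force cancellation of the offending modules.
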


    The paper is organized as follows. Section 2 introduces some preliminary definitions involving direct products of groups. Section 3 reviews some prerequisites on $p$-permutation modules and the Brauer construction. Section 4 is a review of block theory, Brauer pairs, and the fusion system of a block. Section 5 establishes the definitions of $p$-permutation equivalences and splendid Rickard equivalences we will use for the paper, and proves a few basic results for splendid Rickard equivalences which hold for more restrictive definitions. Section 6 reviews preliminaries on extended tensor products with a focus on extended tensor products of $p$-permutation bimodules. Section 7 recalls the definition of a Brauer pair for a $p$-permutation module and a $p$-permutation equivalence as defined by Perepelitsky and Boltje. Section 8 states and proves the main results of the paper regarding the notion of a Brauer pair for a splendid Rickard equivalence. Finally, Section 9 proves as corollaries a few structural results about the modules which appear in splendid Rickard equivalences, which are analogous to results for $p$-permutation equivalences.

    \textbf{Notation and conventions:} For this paper, $G$ and $H$ are finite groups, $k$ is an algebraically closed field of prime characteristic $p$, and $\calO$ is a complete discrete valuation ring of characteristic 0 with residue field $k$. The symbol $R$ may be used to denote either $\calO$ or $k$. All modules are assumed to be finitely generated, and all chain complexes are assumed to be bounded. Given two $RG$-modules $M$ and $N$, the tensor product $M\otimes_k N$ is an $RG$-module with $G$-action $g(m\otimes n) = gm\otimes gn$. We denote by $M^*:=\mathrm{Hom}_R(M,R)$ the $R$-dual of $M$, which is an $RG$-module via $G$-action $g\cdot f(m) = f(g\inv m)$. Similarly, if $K$ is a finite group, given a $(RG,RH)$-bimodule $M$ and a $(RH,RK)$-bimodule $N$, the tensor product $M\otimes_{RH} N$ is a $(RG,RK)$-bimodule. We denote by $M^*$ the $R$-dual of $M$ which is an $(RH,RG)$-bimodule under the action $h\cdot f(m) \cdot g = f(gmh)$. We write $s_p(G)$ to denote the set of $p$-subgroups of $G$. We refer the reader to \cite[Sections 1.17]{Li181} for the constructions of tensor products and internal homs of chain complexes.

	\section{Subgroups of direct products of groups}
		
	We begin by recalling some constructions related to subgroups of direct products of groups. We refer the reader to \cite[Part I, Chapter 2]{Bo10} for more details. 
	
	\begin{definition}
		Let $G,H,K$ be finite groups, let $X \leq G\times H$ and $Y \leq H\times K$. 
		\begin{enumerate}
			\item Denote by $p_1: G\times H \to G$ and $p_2: G\times H \to H$ the canonical projections. Setting \[k_1(X) := \{g \mid (g,1) \in X\} \text{ and } k_2(X):= \{h \mid (1, h) \in H\},\] we obtain normal subgroups $k_i(X) \trianglelefteq p_i(X)$ and canonical isomorphisms $X/(k_1(X) \times k_2(X)) \xrightarrow{\sim} p_i(X)/k_i(X)$ for $i \in \{1,2\}$ induced by the projection maps $p_i$. The resulting isomorphism $\nu_X: p_2(X)/k_2(X) \xrightarrow{\sim} p_1(X)/k_1(X)$ satisfies $\nu_X(hk_2(X)) = gk_1(X)$ if and only if $(g,h) \in X$, where $(g,h) \in p_1(X)\times p_2(X)$. 
			\item If $\phi: Q \xrightarrow{\sim} P$ is an isomorphism between subgroups $Q \leq H$ and $P \leq G$, then \[\Delta(P, \phi, Q) := \{(\phi(q),q) \mid q\in Q\} \] is a subgroup of $G\times H$. Subgroups of this form will be called \textit{twisted diagonal subgroups} of $G\times H$. For $P \leq G$, we set $\Delta(P):= \Delta(P, \id, P)\leq G\times G$. It is easy to verify that $X \leq G\times H$ is twisted diagonal if and only if $|k_1(X)| = 1$ and $|k_2(X)| = 1$, and that for $(g,h) \in G\times H$, one has \[{}^{(g,h)} \Delta(P,\phi, Q) = \Delta({}^gP, c_g\phi c_{h\inv}, {}^h Q). \]
            Moreover, the set of twisted diagonal subgroups of $G\times H$ is closed under taking subgroups. 
			\item The subgroup $X^\circ := \{(h,g) \in H\times G \mid (g,h) \in X \} $ is called the \textit{subgroup opposite to $X$}. 
			\item The \textit{composition} of $X$ and $Y$ is defined as \[X\ast Y := \{(g,k) \in G\times K \mid \exists h\in H: (g,h)\in X \text{ and } (h,k)\in Y \}.  \] It is a routine verification that composition is associative. If $\Delta(P,\phi, Q) \leq G\times H$ and $\Delta (Q, \psi, R) \leq H\times K$ are twisted diagonal subgroups, then \[\Delta(P,\phi, Q) \ast \Delta(Q,\psi, R) = \Delta(P, \phi\circ\psi, R). \] Note that $(X\ast Y)^\circ = Y^\circ \ast X^\circ$.
		\end{enumerate}
	\end{definition}
	
	The following lemma consists of routine verifications.
	
	\begin{lemma}\label{lem:composing subgroups}
		Let $G,H,K$ be finite groups, let $X \leq G\times H$ and $Y \leq H\times K$. 
		\begin{enumerate}
			\item We have $X \ast X^\circ= \Delta(p_1(X))\cdot (k_1(X)\times \{1\}) = \Delta(p_1(X))\cdot (\{1\}\times k_1(X)) = \Delta(p_1(X))\cdot (k_1(X)\times k_1(X))$.
			\item We have $X\ast X^\circ \ast X = X.$
			\item For any $g \in G, h\in H, k\in K$, we have ${}^{(g,h)}X \ast {}^{(h,k)}Y = {}^{(g,k)}(X\ast Y)$.
		\end{enumerate}
	\end{lemma}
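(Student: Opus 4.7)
The approach is a direct element-chase from the definitions of $X \ast Y$, $X^\circ$, and twisted diagonal structure, with the key observation for parts (a) and (b) being the characterization of $X \ast X^\circ$ via the isomorphism $\nu_X$. No deep machinery is needed; the work is in unpacking definitions carefully and handling the cosets $g\, k_1(X)$ consistently.

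For part (a), I would first rewrite the composition explicitly: $(g_1,g_2) \in X \ast X^\circ$ iff there exists $h \in H$ with $(g_1,h) \in X$ and $(g_2,h) \in X$. The existence of such $h$ forces $g_1,g_2 \in p_1(X)$ with $g_1 k_1(X) = \nu_X(h k_2(X)) = g_2 k_1(X)$, hence $g_1 g_2^{-1} \in k_1(X)$; the converse uses that $k_1(X) \times \{1\} \subseteq X$ to lift any such pair. Once I have
\[X \ast X^\circ = \{(g_1,g_2) \in p_1(X) \times p_1(X) \mid g_1 g_2^{-1} \in k_1(X)\},\]
each of the three claimed expressions is matched by writing $(g_1,g_2) = (g_1 g_2^{-1},1)(g_2,g_2)$, respectively $(g_1,g_1)(1, g_1^{-1} g_2)$, using normality of $k_1(X)$ in $p_1(X)$ to identify the third description with the first two.

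For part (b), the inclusion $X \subseteq X \ast X^\circ \ast X$ is immediate: any $(g,k) \in X$ arises from the middle element $h = g$, since $(g,g) \in \Delta(p_1(X)) \subseteq X \ast X^\circ$ by (a). For the reverse inclusion, given $(g,k) \in (X \ast X^\circ) \ast X$, I would pick $h$ with $(g,h) \in X \ast X^\circ$ and $(h,k) \in X$; by (a) we have $gh^{-1} \in k_1(X)$, so $(g,k) = (gh^{-1},1)\cdot(h,k)$ lies in $(k_1(X)\times\{1\}) \cdot X = X$. For part (c), write out $(a,c) \in {}^{(g,h)}X \ast {}^{(h,k)}Y$ as the existence of $b \in H$ with $(g^{-1}ag,\, h^{-1}bh) \in X$ and $(h^{-1}bh,\, k^{-1}ck) \in Y$; substituting the intermediate element $b' = h^{-1}bh$ shows this is exactly $(g^{-1}ag,\, k^{-1}ck) \in X \ast Y$, i.e.\ $(a,c) \in {}^{(g,k)}(X \ast Y)$.

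The main obstacle, such as it is, is purely bookkeeping: one must be careful that the three apparently different expressions in (a) genuinely coincide rather than being, say, left-vs-right coset translates with a twist. Using the normality $k_1(X) \trianglelefteq p_1(X)$ explicitly eliminates this risk. Beyond that, parts (b) and (c) reduce immediately to substitution, so no step presents a real difficulty.
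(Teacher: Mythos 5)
Your proof is correct. The paper itself declares the lemma ``routine to verify'' and supplies no argument; your direct element-chase — characterizing $X \ast X^\circ$ as $\{(g_1,g_2) \in p_1(X)\times p_1(X) \mid g_1g_2^{-1}\in k_1(X)\}$ via $\nu_X$, then using normality of $k_1(X)$ in $p_1(X)$ to reconcile the three product expressions, and substituting the intermediate element in (b) and (c) — is exactly the expected verification.
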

	
	\begin{definition}
		Let $G$ and $H$ be finite groups, let $\Delta(P, \phi, Q)$ be a twisted diagonal subgroup of $G\times H$, and let $S \leq N_G(P)$ and $T\leq N_H(Q)$. We denote by $N_{(S,\phi, T)}$ the subgroup of $S$ consisting of all elements $g \in S$ such that there exists an element $h \in T$ satisfying $c_g \phi c_h = \phi$ as functions from $Q$ to $P$. Note that if $S$ contains $C_G(P)$, then also $N_{(S, \phi, T)}$ also contains $C_G(P)$. Moreover, if $P \leq S$ and $Q \leq T$, then $P \leq N_{(S,\phi, T)}$. We further set $N_\phi := N_{(N_G(P), \phi, N_H(Q))}$. Note that $N_\phi$ corresponds to $N_{\phi\inv}$ in the literature on fusion systems, see for example \cite{AKO11}.
	\end{definition}
	
	The following lemma is again routine. 
	
	\begin{lemma}\label{prop:proj&kernelsforspecialsubgroups}
		Let $G$ and $H$ be finite groups, let $\Delta(P, \phi, Q)$ be a twisted diagonal subgroup of $G\times H$, and let $C_G(P)\leq S\leq N_G(P)$ and $C_H(Q) \leq T \leq N_H(Q)$ be intermediate subgroups. 
		\begin{enumerate}
			\item One has $N_{G\times G}(\Delta(P)) = \Delta(N_G(P))\cdot (C_G(P) \times \{1\}) = \Delta(N_G(P)) \cdot (1\times C_G(P))$.
			\item For $X= N_{G\times H}(\Delta(P,\phi, Q))$, we have $k_1(X) = C_G(P)$, $k_2(X) = C_H(Q)$, $p_1(X) = N_\phi$, and $p_2(X) = N_{\phi\inv}$.
			\item For $X= N_{S\times T}(\Delta(P,\phi, Q))$, we have $k_1(X) = C_G(P)$, $k_2(X) = C_H(Q)$, $p_1(X) = N_{(S,\phi,T)}$, and $p_2(X) = N_{(T, \phi\inv, S)}$.
		\end{enumerate}
	\end{lemma}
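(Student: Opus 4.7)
The plan is to unwind all three statements directly from the conjugation formula ${}^{(g,h)}\Delta(P,\phi,Q) = \Delta({}^gP,\, c_g\phi c_{h^{-1}},\, {}^hQ)$ recorded in the preceding definition. From this, membership $(g,h) \in N_{G\times H}(\Delta(P,\phi,Q))$ is equivalent to the three simultaneous conditions $g \in N_G(P)$, $h \in N_H(Q)$, and $c_g \phi c_{h^{-1}} = \phi$ as maps $Q \to P$ (equivalently, $c_g\phi = \phi c_h$). Everything that follows is a careful translation between this condition and the defining conditions for the $k_i$, $p_i$, $N_\phi$, and $N_{(S,\phi,T)}$.

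For part (b), I would set $X = N_{G\times H}(\Delta(P,\phi,Q))$ and compute directly. Setting $h = 1$ in $c_g \phi c_{h^{-1}} = \phi$ forces $c_g|_P = \id_P$, so $k_1(X) = C_G(P)$; the symmetric argument gives $k_2(X) = C_H(Q)$. For the image, $g \in p_1(X)$ precisely when $g \in N_G(P)$ and some $h \in N_H(Q)$ satisfies $c_g\phi c_{h^{-1}} = \phi$; since $N_H(Q)$ is closed under inversion, this is equivalent to the existence of $h' \in N_H(Q)$ with $c_g\phi c_{h'} = \phi$, i.e., $g \in N_\phi$. To obtain $p_2(X) = N_{\phi^{-1}}$ I would either repeat this with the roles swapped, or observe that inverting $c_g\phi c_{h^{-1}} = \phi$ gives $c_h \phi^{-1} c_{g^{-1}} = \phi^{-1}$ and then quote the definition of $N_{(N_H(Q),\phi^{-1}, N_G(P))}$.

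Part (c) will follow from the identical computation with $N_G(P)$ and $N_H(Q)$ replaced by $S$ and $T$. The hypotheses $C_G(P) \leq S$ and $C_H(Q) \leq T$ guarantee that setting $h = 1$ (resp.\ $g = 1$) still places $g$ in $S$ (resp.\ $h$ in $T$), so the same kernels are recovered, and the projection identity yields $N_{(S,\phi,T)}$ and $N_{(T,\phi^{-1},S)}$ respectively. For part (a), both inclusions $\Delta(N_G(P)) \leq N_{G\times G}(\Delta(P))$ and $C_G(P) \times \{1\} \leq N_{G\times G}(\Delta(P))$ are immediate from the conjugation formula with $\phi = \id$, giving one containment. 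Conversely, specialising the computation of (b) to $\phi = \id$ shows any $(g_1,g_2) \in N_{G\times G}(\Delta(P))$ satisfies $g_1, g_2 \in N_G(P)$ and $g_1 g_2^{-1} \in C_G(P)$, whence $(g_1,g_2) = (g_1 g_2^{-1}, 1)\cdot (g_2,g_2)$ lies in $(C_G(P) \times \{1\})\cdot \Delta(N_G(P))$; since $C_G(P) \trianglelefteq N_G(P)$ this product coincides with $\Delta(N_G(P))\cdot(C_G(P)\times\{1\})$, and a parallel factorisation $(g_1,g_2) = (g_1,g_1)\cdot(1, g_1^{-1}g_2)$ gives the second equality.

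The proof is essentially bookkeeping; the only step requiring care is the discrepancy between the $c_g\phi c_{h^{-1}} = \phi$ appearing in the conjugation formula and the $c_g\phi c_h = \phi$ appearing in the definition of $N_{(S,\phi,T)}$, which disappears after the substitution $h \mapsto h^{-1}$ within the subgroup $T$. I do not anticipate any genuine obstacle beyond this minor translation.
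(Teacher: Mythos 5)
Your proof is correct, and it follows the natural direct computation the paper has in mind when it labels the lemma ``routine to verify''; the paper gives no written proof, so there is nothing to contrast against. All the key observations are in place: the characterization of $N_{G\times H}(\Delta(P,\phi,Q))$ via the conjugation formula (relying on the fact that two twisted diagonal subgroups $\Delta(P_1,\phi_1,Q_1)=\Delta(P_2,\phi_2,Q_2)$ forces $P_1=P_2$, $Q_1=Q_2$, $\phi_1=\phi_2$, which you implicitly use and is immediate from the description as a graph), the $h\mapsto h^{-1}$ substitution to reconcile $c_g\phi c_{h^{-1}}=\phi$ with the definition of $N_{(S,\phi,T)}$, the role of the hypotheses $C_G(P)\leq S$ and $C_H(Q)\leq T$ in pinning down the kernels in part (c), and the two factorizations of $(g_1,g_2)$ in part (a) together with the normality $C_G(P)\trianglelefteq N_G(P)$ to justify interchanging the two factors. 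No gaps.
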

	
	\section{$p$-permutation modules and the Brauer construction}
	
	Throughout this chapter, let $G$ be a finite group and let $(K,\calO, k)$ be a $p$-modular system with $\calO$ and $k$ having $|G|_{p'}$ roots of unity. Unless otherwise specified, $R$ will denote either $\calO$ or $k$. The properties listed in this section are well-known, we refer the reader to \cite{Br85} or \cite{Li181} for more details and proofs. 
	
	\begin{definition}
		An $RG$-module $M$ is a \textit{$p$-permutation module} if for every $p$-subgroup $P \leq G$, $\Res^G_P M$ is a permutation $RG$-module. An $(RG, RH)$-bimodule $M$ is a \textit{$p$-permutation bimodule} if it is a $p$-permutation module when considered as an $R[G\times H]$-module.
	\end{definition}

	\begin{theorem}
 
		\begin{enumerate}
			\item Let $M$ be a $RG$-module. The following are equivalent:
			\begin{enumerate}[label=(\roman*)]
				\item $M$ is a $p$-permutation module.
				\item Let $S \leq G$ be a Sylow $p$-subgroup of $G$. Then $\Res^G_S M$ is a permutation module.
				\item Each indecomposable direct summand of $M$ has trivial source.
			\end{enumerate}
			\item The functor induced by the surjection $\calO \to k$, $\overline{(-)}: {}_{\mathcal{O}G}\mathbf{mod} \to {}_{kG}\mathbf{mod}$ induces a vertex-preserving bijection from the set of isomorphism classes of $p$-permutation $\mathcal{O}G$-modules to the set of isomorphism classes of $p$-permutation $kG$-modules. 
			\item For any two $p$-permutation $\calO G$-modules $M, N$, the canonical map from $\Hom_{\calO G}(M, N)$ to $\Hom_{kG}(\overline{M}, \overline{N})$ is surjective. 
		\end{enumerate}
	\end{theorem}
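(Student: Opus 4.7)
The plan is to prove the three parts in sequence, using Higman/vertex theory for part (1), idempotent lifting for part (2), and a Mackey-type Hom computation for part (3).

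For part (1), I would prove the cycle (ii)$\Rightarrow$(i)$\Rightarrow$(iii)$\Rightarrow$(ii) (reading (ii) as $M$ being a direct summand of a permutation module, the standard equivalent form). The implication (ii)$\Rightarrow$(i) is immediate: a permutation basis of $M$ as an $RG$-module restricts to a permutation basis as an $RP$-module, and the $p$-permutation property is closed under direct summands. For (i)$\Rightarrow$(iii), let $N$ be an indecomposable summand of $M$ with vertex $P$ and source $S$. Then $S$ is an indecomposable direct summand of $\Res^G_P N$, which is a summand of $\Res^G_P M$, and by hypothesis the latter is a permutation $RP$-module, hence a direct sum of transitive permutation modules $R[P/Q]$. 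By Krull--Schmidt, $S \cong R[P/Q]$ for some $Q \leq P$; minimality of the vertex forces $Q=P$, so $S\cong R$ is trivial. For (iii)$\Rightarrow$(ii), any indecomposable module with trivial source and vertex $Q$ is by definition a summand of $\Ind^G_Q R = R[G/Q]$, so $M$ is a direct summand of a permutation module.

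For part (2), I would invoke the fact that, since $\calO$ is complete with residue field $k$, idempotents in $\End_{kG}(\overline{M})$ lift uniquely (up to conjugation) to idempotents in $\End_{\calO G}(M)$. For surjectivity, any $p$-permutation $kG$-module $N$ is a summand of some permutation module $k[G/P]=\overline{\calO[G/P]}$; lifting the corresponding idempotent yields a $p$-permutation $\calO G$-module $M$ with $\overline{M}\cong N$. For injectivity, given $\overline{M_1}\cong \overline{M_2}$, I would use part (3) to lift the isomorphism to a morphism $\varphi: M_1 \to M_2$; since $p$-permutation $\calO G$-modules are $\calO$-free (summands of free $\calO$-modules over a local PID) and $\overline{\varphi}$ is an isomorphism, Nakayama's lemma forces $\varphi$ to be an isomorphism. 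Vertex preservation follows because the vertex is characterized by Higman's criterion (minimal $P$ with $M \mid \Ind^G_P \Res^G_P M$), and both $\Ind$ and $\Res$ commute with reduction modulo the maximal ideal, while lifting of summands is controlled by idempotent lifting.

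For part (3), I would treat the permutation case first. For $M = \calO[G/P]$, one has natural isomorphisms $\Hom_{\calO G}(M, N) \cong N^P$ and $\Hom_{kG}(\overline{M}, \overline{N}) \cong \overline{N}^P$, so surjectivity of the canonical map reduces to surjectivity of the reduction map $N^P \to \overline{N}^P$. Since $\Res^G_P N$ is a permutation $\calO P$-module, its fixed points $N^P$ are $\calO$-free with basis given by orbit sums of the permutation basis; tensoring this basis with $k$ produces a basis of $\overline{N}^P$, giving the desired surjectivity (and in fact an isomorphism). The general case follows by writing an arbitrary $p$-permutation $M$ as a direct summand of such a permutation module $L$ and using the induced split surjection from $\Hom(L,N)$ onto $\Hom(M,N)$ together with functoriality of reduction. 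The main obstacle is the commutation of $P$-fixed points with reduction mod $\pi$, which relies crucially on the explicit permutation-basis structure furnished by part (1); everything else is an assemblage of standard idempotent-lifting arguments.
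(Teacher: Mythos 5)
The paper does not actually supply a proof of this theorem; it is stated as background and the reader is referred to Linckelmann's book and Brou\'e's paper for details. Your sketch is therefore not competing with an in-paper argument, but it does reconstruct the standard proof correctly, and your catch that part (1)(ii) must be read as ``direct summand of a permutation module'' (rather than the literal ``direct sum,'' which would merely say $M$ is a permutation module and would be false) is exactly right.

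Two small points deserve to be made explicit if this were written out in full. First, in (ii)$\Rightarrow$(i) the assertion that ``the $p$-permutation property is closed under direct summands'' secretly uses that, over a $p$-group $P$ and a local ring $R$ of residue characteristic $p$, each transitive permutation module $R[P/Q]$ is indecomposable, so that by Krull--Schmidt every direct summand of a permutation $RP$-module is again a permutation $RP$-module; without this, ``summand of a permutation $RP$-module'' need not be ``permutation $RP$-module.'' Second, in part (3) the computation $\Hom_{\calO G}(\calO[G/P],N)\cong N^P$ together with the ``orbit-sum basis'' argument for surjectivity of $N^P\to\overline{N}^P$ requires $P$ to be a $p$-subgroup (so that $\Res^G_P N$ is a permutation module); this is fine because, by (1)(iii), a $p$-permutation module is a summand of $\bigoplus_i \calO[G/P_i]$ with each $P_i$ a $p$-subgroup, but that choice should be stated rather than left to ``such a permutation module.'' With those clarifications your argument is complete and matches the standard treatment the paper cites.
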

	
	For this reason, indecomposable $p$-permutation modules are called \textit{trivial source modules}. We denote the full subcategory of $p$-permutation $RG$-modules by ${}_{RG}\textbf{triv}$, and the full subcategory of $p$-permutation $(RG,RH)$-bimodules by ${}_{RG}\textbf{triv}_{RH}$. Given an idempotent $e \in Z(RG)$, we may consider ${}_{RGe}\textbf{triv}$ a full subcategory of ${}_{RG}\textbf{triv}$. 
	
	\begin{definition}
		The Brauer construction is a functor $(-)(P): {}_{\calO G}\textbf{mod} \to {}_{k[N_G(P)/P]}\textbf{mod}$ defined on objects by: \[M(P) := M^P/\left(\sum_{Q < P} \text{tr}^P_Q(M^P) + J(\calO)M^P \right).\] The Brauer construction is also considered as a functor $(-)(P): {}_{kG}\textbf{mod} \to {}_{k[N_G(P)/P]}\textbf{mod}$ as well, via \[M(P) := M^P/\sum_{Q < P} \text{tr}^P_Q(M^P).\] This choice is convention is minor, as the following diagram of functors commutes for any $p$-subgroup $P \in s_p(G)$:
		
		\begin{figure}[H]
			\centering
			\begin{tikzcd}
			{}_{\calO G}\textbf{mod} \ar[d, "\overline{(-)}"] \ar[dr, "-(P)"] \\
			{}_{kG}\textbf{mod} \ar[r, "-(P)"]& {}_{kN_G(P)/P}\textbf{mod}
			\end{tikzcd}
		\end{figure}

		If $P$ is not a $p$-group, then $M(P)=0$ for every $RG$-module $M$. For this reason, we restrict our attention to taking Brauer quotients at $p$-groups. The Brauer construction restricts to a functor $(-)(P): {}_{RG}\triv \to {}_{k[N_G(P)/P]}\triv$. The Brauer construction is an additive functor, but in general it is not exact. Since it is additive, it can be extended to a functor of chain complexes, $\Ch({}_{RG}\textbf{mod}) \to \Ch({}_{k[N_G(P)/P]}\textbf{mod})$ and similarly for ${}_{RG}\triv$. 
	\end{definition}

    The following properties follow easily from the definitions. 
	
	\begin{prop}\label{brauercommuteswithdualsandconj}
		\begin{enumerate}
			\item Let $M$ be an $RG$-module and $P\in s_p(G)$. There is a natural isomorphism $M(P)^* \cong (M^*)(P)$  of $R[N_G(P)/P]$-modules. Similarly, if $M$ is an $(RG,RH)$-bimodule, then for any $X \in s_p(G\times H)$ there is a natural isomorphism $M(X)^* \cong (M^*)(X^\circ)$ of $R[N_{H\times G}(X^\circ)/X^\circ]$-modules. 
			\item Let $M$ be a $RG$-module, $P \in s_P(G)$, and $x \in G$. Then there is a natural isomorphism ${}^x(M(P)) \cong M({}^xP)$ of $k[{}^x(N_G(P)/P)]$-modules. 
		\end{enumerate}
	\end{prop}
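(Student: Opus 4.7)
The plan is to prove both parts by constructing explicit natural maps and then verifying the divisibility computation that drives the Brauer construction. For part (1) over $\calO$, I would begin with a $P$-invariant $f \in (M^*)^P$ and form the composite $M^P \hookrightarrow M \xrightarrow{f} \calO \twoheadrightarrow k$. This factors through $M(P)$ by two vanishings: first, $f$ annihilates $J(\calO)M^P$ after reduction mod $\pi$ for trivial reasons; second, for $m \in M^Q$ with $Q < P$, the $P$-invariance of $f$ gives
\[
f(\operatorname{tr}^P_Q(m)) = \sum_{g \in [P/Q]} f(gm) = [P:Q]\,f(m),
\]
and $p \mid [P:Q]$ kills this in $k$. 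A symmetric check (noting $\operatorname{tr}^P_Q(f)|_{M^P} = [P:Q]\cdot f|_{M^P}$) shows the resulting map $(M^*)^P \to M(P)^*$ descends to a natural $k$-linear homomorphism $\alpha_M : (M^*)(P) \to M(P)^*$.

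To see $\alpha_M$ is an isomorphism, I would use additivity of both sides combined with the fact that every $p$-permutation $\calO G$-module is a summand of a permutation module. For a transitive permutation module $M = R[G/L]$, there is a canonical self-duality $M^* \cong M$ identifying dual bases, and both $M(P)$ and $(M^*)(P)$ are spanned by the images of fixed-point basis elements $gL$ with $P \leq {}^gL$; under this identification $\alpha_M$ is the canonical pairing of dual bases. Naturality then forces $\alpha_M$ to be an isomorphism on arbitrary summands, and the same argument works verbatim for $kG$-modules by the commutative diagram in the definition. The bimodule statement reduces to the single-group case: viewing $M$ as an $R[G\times H]$-module, the paper's convention $h\cdot f\cdot g(m) = f(gmh)$ is exactly the identification of the $R[H\times G]$-action on $M^*$ with the dual action under the involution $X \mapsto X^\circ$, so applying the proven version for $G\times H$ yields $M(X)^* \cong (M^*)(X^\circ)$ as $R[N_{H\times G}(X^\circ)/X^\circ]$-modules.

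For part (2), I would define $\phi : M(P) \to M({}^xP)$ by $[m] \mapsto [xm]$. Well-definedness reduces to: (i) $xm$ is fixed by ${}^xP$ whenever $m \in M^P$, which is immediate since ${}^xP\cdot xm = xPm = xm$; and (ii) $x\cdot \operatorname{tr}^P_Q(m) = \operatorname{tr}^{{}^xP}_{{}^xQ}(xm)$, which holds because left multiplication by $x$ sends a transversal for $P/Q$ to a transversal for ${}^xP/{}^xQ$. The inverse is $[m'] \mapsto [x^{-1}m']$, so $\phi$ is a $k$-module isomorphism. The equivariance computation
\[
\phi(y\cdot [m]) = [xym] = [({}^xy)\cdot xm] = {}^xy\cdot \phi([m])
\]
for $y \in N_G(P)$ is precisely the assertion that $\phi$ intertwines the natural action on ${}^x(M(P))$ with the action on $M({}^xP)$, yielding the desired isomorphism of $k[{}^x(N_G(P)/P)]$-modules.

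The main obstacle is really only bookkeeping: the substantive content of (1) is the divisibility observation $p \mid [P:Q]$ together with the reduction to permutation modules, where $\alpha_M$ becomes a matching of dual bases; part (2) is purely formal once the candidate map is written down. No input beyond additivity of the Brauer construction and its compatibility with the reduction $\calO \to k$ (both already in the excerpt) is required.
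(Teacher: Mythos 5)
The paper offers no proof of this proposition; it simply asserts that both parts ``follow easily from the definitions,'' so there is no authorial argument to compare against. Your part (2) is complete and correct: the candidate map $[m]\mapsto[xm]$, the two well-definedness checks, and the equivariance computation under $y\mapsto {}^x y$ are exactly what one writes down, and nothing more is needed.

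For part (1), your construction of $\alpha_M$ and the verification that it descends to $(M^*)(P)\to M(P)^*$ are correct, and the reduction of the bimodule statement to the single-group case via the involution $X\mapsto X^\circ$ is the right observation. However, you should flag explicitly that your proof of isomorphy covers only $p$-permutation modules. This is not a defect in your argument — it is a defect in the statement. As written, the proposition claims a natural isomorphism for \emph{every} $RG$-module $M$, but the restriction map $\alpha_M$ is not an isomorphism in general. Take $G=P=C_3=\langle g\rangle$, $k$ of characteristic $3$, and $M$ the two-dimensional indecomposable $kG$-module with basis $e_1,e_2$, $ge_1=e_1$, $ge_2=e_1+e_2$. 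One computes $M^P=ke_1$, $\operatorname{tr}^P_1(M)=0$, so $M(P)=ke_1$ is one-dimensional; dually $(M^*)^P=ke_2^*$ and $(M^*)(P)=ke_2^*$ is also one-dimensional. But $e_2^*|_{M^P}=0$, so $\alpha_M$ is the zero map between two one-dimensional spaces. Thus the displayed natural transformation is not an isomorphism for arbitrary modules, and no different natural transformation can rescue the claim in that generality (it would have to agree with $\alpha$ on the permutation modules that generate the category under summands). The correct scope — and the scope in which the paper actually uses the result, always applying it to terms of a splendid Rickard complex, hence to $p$-permutation bimodules — is trivial-source modules, where your dual-basis argument on $R[G/L]$ together with additivity of the Brauer construction does the job. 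Minor points: rather than transporting the $\calO$-case to $k$ via the commuting square (which silently requires comparing $\calO$-duals and $k$-duals under reduction), it is cleaner to just run the identical restriction argument directly over $k$; and note that the target category must be $k[N_G(P)/P]$-modules on both sides, not $R[N_G(P)/P]$ as the proposition states, since the Brauer construction always lands over $k$.
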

	
	The Brauer construction behaves especially well with respect to $p$-permutation modules. The following properties are well-known, and are collected from \cite[\S 5.10-5.11]{Li181}.
	
	\begin{prop} \label{brauerppermproperties}
		\begin{enumerate}
			\item Let $M, N$ be $p$-permutation $RG$-modules and $P \in s_p(G)$. Then there is a natural isomorphism $(M\otimes_R N)(P) \cong M(P) \otimes_k N(P)$ of $k[N_G(P)/P]$-modules. 
			\item Let $M \cong RX$ be a permutation module and $P \in s_p(G)$. Then $M(P)$ has as a permutation $k$-basis the image of the $P$ fixed points $X^P$ under the quotient map $M^P \to M(P)$. In particular, $R[G/P](P) \cong k[N_G(P)/P]$ as $k[N_G(P)/P]$-modules.
			\item Let $M$ be a trivial source $RG$-module. Then $M(P) \neq 0$ if and only if $P$ is contained in a vertex of $M$.
			\item Let $Q \trianglelefteq P \in s_p(G)$ and let $M$ be a $p$-permutation $RG$-module. Then $M(P) \cong M(Q)(P)$, regarding $M(Q)$ as a $kN_G(Q)$-module. In particular, if $M(P) \neq 0$, then $M(R) \neq 0$ for any subgroup $R \leq P$.
			\item For each $P\in s_p(G)$, the Brauer construction $M \mapsto M(P)$ induces a bijection between the set of isomorphism classes of indecomposable $p$-permutation $RG$-modules with vertex $P$ and the set of isomorphism classes of indecomposable projective $k[N_G(P)/P]$-modules. Moreover, if $M$ is indecomposable with vertex $P$, then $M(P)$ is its Green correspondent. 
		\end{enumerate}
	\end{prop}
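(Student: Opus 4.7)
My plan is to work through the five claims in an order that exploits an explicit computation of the Brauer construction on permutation modules. I would begin with (2), the foundational statement. For $M = RX$, the $P$-fixed points $M^P$ form a free $R$-module spanned by the $P$-orbit sums on $X$, namely the singletons from $X^P$ together with sums $\sum_{y \in Px} y$ over $P$-orbits of length at least two. For any $Q \le P$ and $x \in X^Q$ with $P$-stabilizer $P_x \ge Q$, the transfer is $\mathrm{tr}^P_Q(x) = [P_x : Q] \sum_{g \in P/P_x} gx$. Taking $Q = P_x$ for a non-$P$-fixed $x$ recovers the orbit sum of $x$ itself; taking any $Q < P$ with $x \in X^P$ yields $[P:Q] \cdot x \in pM^P \subseteq J(\calO)M^P$. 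Hence quotienting by $\sum_{Q<P}\mathrm{tr}^P_Q(M^Q) + J(\calO)M^P$ kills exactly these classes, leaving $M(P) \cong k[X^P]$. Specializing to $M = R[G/P]$ and identifying $(G/P)^P = N_G(P)/P$ yields the second assertion of (2).

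Items (1), (3), and (4) then follow quickly. For (1), both sides of the claimed isomorphism are additive in $M$ and $N$, so I would reduce to $M = RX$, $N = RY$; in that case $M \otimes_R N \cong R[X \times Y]$ with $(X \times Y)^P = X^P \times Y^P$, and (2) matches the $k$-bases on the two sides. For (3), a trivial source module with vertex $V$ is a direct summand of $R[G/V]^{\oplus m}$ for some $m$, so by (2), $M(P) \neq 0$ forces $(G/V)^P \neq \emptyset$, i.e.\ $P$ is $G$-subconjugate to $V$; the converse at $P = V$ follows by evaluating on a $V$-fixed generator of a trivial $RV$-summand. For (4), the identification $(M^Q)^{P/Q} = M^P$ is immediate from $Q \trianglelefteq P$, and for any $S < P$ the transfer $\mathrm{tr}^P_S$ factors through $\mathrm{tr}^P_{SQ}$ whenever $SQ < P$, absorbing the transfer terms into traces from subgroups containing $Q$; together with the analogous absorption of the $J(\calO)$-terms this matches the two iterated quotients.

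The main obstacle is (5), which encodes Green correspondence via the Brauer construction. From (2) and (3), if $M$ is indecomposable with vertex $P$, then $M(P)$ is a nonzero direct summand of $R[G/P](P) \cong k[N_G(P)/P]$ and hence projective; but the substantive content is that $M \mapsto M(P)$ is a bijection onto the set of indecomposable projective $k[N_G(P)/P]$-modules, and that $M(P)$ is precisely the Green correspondent of $M$. Establishing both requires comparing the Brauer construction with the restriction-induction adjunction between $kG$- and $kN_G(P)$-modules and invoking the uniqueness of the Green correspondent. For this part I would follow the treatment in \cite[\S 5.10-5.11]{Li181} rather than reproduce the machinery from scratch; this is the only item whose proof requires substantive tools beyond the explicit computations above.
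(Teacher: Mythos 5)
The paper offers no proof of this proposition at all: it labels the facts ``well-known'' and cites \cite[\S 5.10--5.11]{Li181}. Your attempt therefore does strictly more work than the paper does, and the approach you take---compute the Brauer construction explicitly on a permutation module, then propagate by additivity---is the standard and correct one. Your treatment of (2) is complete and accurate; (1) then follows exactly as you say by reducing to $R[X]$ and $R[Y]$ and observing $(X\times Y)^P = X^P\times Y^P$; and your argument for (3) is right once one notices, as you do, that the Brauer construction at $P$ depends as a $k$-vector space only on $\mathrm{Res}^G_P M$, so that a trivial direct summand of $\mathrm{Res}^G_V M$ (which exists precisely because $M$ has trivial source and vertex $V$) survives. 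Deferring (5) to Linckelmann is also consistent with the paper.

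The one place where your sketch has a real gap is (4). The ``absorption'' argument--that $\mathrm{tr}^P_S$ factors through $\mathrm{tr}^P_{SQ}$--only works under the explicit hypothesis $SQ < P$, and that hypothesis can fail: for $Q\trianglelefteq P$ and $S<P$ with $Q\not\leq S$ one may well have $SQ = P$ (already for $P\cong C_p\times C_p$ with $Q$ and $S$ distinct order-$p$ subgroups). In that case you cannot push $\mathrm{tr}^P_S(M^S)$ into a trace from a proper subgroup containing $Q$, and the claimed matching of the two iterated quotients is no longer visible at the level of transfer maps. The fix is to do for (4) exactly what you did for (1): since both $M(P)$ and $M(Q)(P)$ depend only on $\mathrm{Res}^G_P M$, reduce by additivity to $M=RX$ a permutation module; then by (2) one has $M(P)\cong k[X^P]$ and $M(Q)(P)\cong k[(X^Q)^{P/Q}] = k[X^P]$, and the two quotient maps are compatible because both send a $P$-orbit-sum basis element to the corresponding fixed point or to zero. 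This sidesteps the $SQ=P$ case entirely and is the argument you should substitute for the transfer-absorption heuristic.
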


    The Brauer construction also is well-behaved with respect to vertices, and preserves twisted diagonal vertices.
    
    \begin{prop}{\cite[Lemma 3.7]{BoPe20}}
        \begin{enumerate}
            \item Let $M \in {}_{RG}\triv$ be indecomposable, and let $P \in s_p(G)$. Then each vertex of each indecomposable direct summand of $M(P)$ as $k[N_G(P)]$-module is contained in a vertex of $M$.
			\item Let $G$ and $H$ be finite groups and let $M \in {}_{RG}\triv_{RH}$ be indecomposable with twisted diagonal vertices. Let $X \in s_p(G\times H)$. Then each vertex of each indecomposable direct summand of the $k[N_{G\times H}(X)]$-module $M(X)$ is again twisted diagonal.  
        \end{enumerate}
    \end{prop}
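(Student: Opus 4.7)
Here is the plan. For part (a), fix a vertex $Q$ of $M$. Since $M$ is a trivial source module, $M$ is a direct summand of $\Ind_Q^G R \cong R[G/Q]$, and by additivity of the Brauer construction, $M(P)$ is a direct summand of $R[G/Q](P)$ as $k[N_G(P)]$-module. It therefore suffices to bound the vertices of indecomposable summands of $R[G/Q](P)$.

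By Proposition \ref{brauerppermproperties}(b), $R[G/Q](P)$ is a permutation $k[N_G(P)]$-module with $k$-basis indexed by the fixed-point set $(G/Q)^P = \{gQ : P \leq {}^g Q\}$. I decompose this $N_G(P)$-set into orbits under left multiplication; a direct check shows that the stabilizer of $gQ$ is $N_G(P) \cap {}^g Q$, which is a $p$-group because it is contained in the $p$-group ${}^g Q$. Each orbit therefore yields a transitive permutation summand $k[N_G(P)/(N_G(P) \cap {}^g Q)]$, and the vertex of any indecomposable summand of such a module is $N_G(P)$-conjugate to a subgroup of $N_G(P) \cap {}^g Q \leq {}^g Q$. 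Since ${}^g Q$ is itself a vertex of $M$, this establishes (a).

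For part (b), I apply (a) to $M$ viewed as an $R[G\times H]$-module and to the $p$-subgroup $X \leq G\times H$: every vertex of every indecomposable summand of the $k[N_{G\times H}(X)]$-module $M(X)$ is contained in a vertex of $M$. By hypothesis each vertex of $M$ is twisted diagonal, and the class of twisted diagonal subgroups is closed under taking subgroups (as noted in the definition in Section 2). Hence each such vertex is itself twisted diagonal.

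The only mildly delicate step is the orbit decomposition of $(G/Q)^P$ under $N_G(P)$ and the accompanying stabilizer computation; once that is in hand, the vertex bound for transitive permutation modules is a standard consequence of relative projectivity, and part (b) then follows essentially formally from (a) together with the closure of twisted diagonal subgroups under taking subgroups.
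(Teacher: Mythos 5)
Your proof is correct; note that the paper does not prove this proposition but simply cites it to \cite[Lemma 3.7]{BoPe20}, so there is no in-paper argument to compare against. The route you take is the standard one: reduce to $M \mid R[G/Q]$, identify $R[G/Q](P)$ with the permutation module on $(G/Q)^P$, compute the $N_G(P)$-stabilizer of a fixed coset $gQ$ as $N_G(P)\cap {}^gQ$ (which contains $P$, consistent with $P$ acting trivially on $M(P)$), and invoke relative projectivity to bound vertices inside $N_G(P)\cap {}^gQ \leq {}^gQ$; part (b) then follows formally from (a) together with the closure of twisted diagonal subgroups under passage to subgroups, exactly as you say.
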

	
	\begin{definition}
		The canonical map $M^P \to M(P)$ is denoted by $\Br^M_P$, or just $\Br_P$ if the context is clear, and called the \textit{Brauer map}. In the case where $M = RG$ and $G$ acts via conjugation, the map $\Br^{RG}_P$ translates to the projection map \[\br_P =\br^G_{P}: (RG)^P \to kC_G(P), \quad \sum_{g \in G}\alpha_g\cdot g \mapsto \sum_{g\in C_G(P)} \overline{\alpha_g} \cdot g .\] In fact, this is an algebra homomorphism, called the \textit{Brauer homomorphism}. Note that $\br_P(Z(RG))\subseteq Z(kC_G(P))^{N_G(P)}= Z(kN_G(P)) \cap kC_G(P)$. 
	\end{definition}

    We refer the reader to \cite[Part IV]{AKO11} for more details on Brauer maps. The final proposition allows us to compute the Brauer construction of a module that has been cut by a block idempotent.
	
	\begin{lemma}{\cite[Lemma 3.8]{BoPe20}}\label{lem:braur construction for iM}
		Let $M \in {}_{RG}\triv$, $P\in s_p(G)$, and $i \in (kG)^H$ an idempotent, fixed under the conjugation action of $H \leq G$ with $P \leq H$. Then for each $m \in M^P$, one has $\Br^M_P(im) = \br^G_P(i)\Br^M_P(m)$. In particular, one obtains a natural isomorphism $(iM)(P) \xrightarrow{\sim} \br^G_P(i)M(P)$ of $kN_H(P)$-modules.
		
	\end{lemma}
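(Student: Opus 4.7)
First, I would reduce to the case $R = k$. When $R = \calO$, any $H$-fixed idempotent in $kG$ lifts to an $H$-fixed idempotent in $\calO G$, and the Brauer construction of $M$ agrees with that of its reduction $\overline{M}$ via the commutative diagram in the definition of $(-)(P)$; moreover $(iM)(P) = (i\overline{M})(P)$ and $M(P) = \overline{M}(P)$ as $k$-modules. So I may assume $M \in {}_{kG}\triv$, in which case $i \in (kG)^P$ (using $P \leq H$) acts directly on $M$.

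The key observation underlying both assertions is that $\Br^M_P \colon M^P \to M(P)$ is $(kG)^P$-linear, where the $(kG)^P$-action on $M(P)$ factors through the Brauer homomorphism $\br^G_P \colon (kG)^P \twoheadrightarrow kC_G(P)$. Granting this, the first claim is immediate: $\Br^M_P(im) = i \cdot \Br^M_P(m) = \br^G_P(i) \cdot \Br^M_P(m)$. To establish the observation I would verify two points. First, $K := \sum_{Q < P} \tr^P_Q(M^Q)$ is a $(kG)^P$-submodule of $M^P$: for $x \in (kG)^P$ and $y \in M^Q$, the identity $x \cdot \tr^P_Q(y) = \tr^P_Q(xy)$ follows by expanding the trace and using the $P$-centrality of $x$, together with the easy check that $xy \in M^Q$. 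Thus $M(P) = M^P/K$ inherits a $(kG)^P$-action along which $\Br^M_P$ is equivariant. Second, $\br^G_P$ is the universal quotient for this action: decomposing any $x \in (kG)^P$ into $P$-orbit sums as $x = \br^G_P(x) + \sum_{\bar g \notin C_G(P)} \alpha_{\bar g}\, \tr^P_{C_P(\bar g)}(\bar g)$, each term of the second sum has $C_P(\bar g) < P$ strictly, and so acts on $m \in M^P$ as $\tr^P_{C_P(\bar g)}(\bar g m) \in K$, hence as zero on $M(P)$.

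For the second assertion, I would first verify $(iM)^P = iM^P$ using only that $i$ is a $P$-fixed idempotent (the nontrivial inclusion uses $x = ix$ for $x \in iM$), and $\sum_{Q<P}\tr^P_Q((iM)^Q) = iK$ by pulling $i$ through each trace. Then the restriction of $\Br^M_P$ to $iM^P$ has image $i \cdot M(P) = \br^G_P(i)M(P)$ by the first part, and kernel $iM^P \cap K = iK$ (the reverse inclusion $iM^P \cap K \subseteq iK$ again uses $x = ix$). This induces the desired isomorphism $(iM)(P) = iM^P/iK \xrightarrow{\sim} \br^G_P(i)M(P)$, which is natural in $M$ because $\Br^M_P$ is, and $kN_H(P)$-equivariant because $H$-invariance of $i$ forces $N_H(P)$-invariance of $\br^G_P(i)$. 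I expect the main obstacle to be setting up the orbit decomposition of $x \in (kG)^P$ cleanly and recognizing it as realizing $\br^G_P$ as the universal $(kG)^P$-quotient killing traces from proper subgroups; once this is in hand, the remainder is routine manipulation of traces and an idempotent.
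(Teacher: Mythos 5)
The paper does not prove this lemma; it cites it from \cite[Lemma 3.8]{BoPe20}, so there is no in-text argument to compare against. Your proof is correct and is essentially the standard argument. The reduction to $R=k$ via lifting of $H$-fixed idempotents and the compatibility of $(-)(P)$ with reduction mod $J(\calO)$ is handled appropriately. The key structural observation — that $M^P$ is a $(kG)^P$-module, that $K=\sum_{Q<P}\tr^P_Q(M^Q)$ is a $(kG)^P$-submodule via $x\cdot\tr^P_Q(y)=\tr^P_Q(xy)$, and that the induced $(kG)^P$-action on $M(P)$ kills every $P$-orbit sum $\tr^P_{C_P(\bar g)}(\bar g)$ with $\bar g\notin C_G(P)$ (since $C_P(\bar g)<P$ strictly) — is exactly right and immediately gives $\Br^M_P(im)=\br^G_P(i)\Br^M_P(m)$. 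Your verifications that $(iM)^P=iM^P$, that $\sum_{Q<P}\tr^P_Q\bigl((iM)^Q\bigr)=iK$, and that $iM^P\cap K=iK$ (all using only $i=i^2\in(kG)^P$) are correct and yield the asserted isomorphism $(iM)(P)\cong\br^G_P(i)M(P)$, which is $N_H(P)$-equivariant because $\br^G_P$ is $N_G(P)$-equivariant and $i$ is $H$-fixed. The only quibble is cosmetic: calling $\br^G_P$ the ``universal quotient'' for the action overstates slightly — what you actually prove and use is that the $(kG)^P$-action on $M(P)$ factors through $\br^G_P$, not a universal property.
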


	\section{Block-theoretic preliminaries}
	
	Throughout, let $G$ and $H$ be finite groups and assume the $p$-modular system $(K,\calO, k)$ is large enough for $G \times H$ with $k$ algebraically closed for simplicity. We review some important preliminaries regarding Brauer pairs and block fusion systems. We assume the reader is familiar with the definition of a block, and that blocks of $\calO G$ and $kG$ are in one-to-one correspondence induced by the canonical surjection. If $B$ is a sum of blocks of $kG$, then we write $e_B$ for the identity element of $B$.
	
	\begin{remark}
		\begin{enumerate}
			\item Every block idempotent of $R[G\times H] = RG\otimes_R RH$ is of the form $e\otimes f$ for uniquely determined block idempotents $e \in Z(RG)$ and $f \in Z(RH)$. Note that we identify the $R$-algebras $R[G\times H] $ and $RG\otimes_R RH$ with $(g, h) \mapsto g\otimes h$.
			\item Recall that a \textit{Brauer pair} of $kG$ is a pair $(P,e)$, with $P \in s_p(G)$ and $e$ is a block idempotent of $kC_G(P)$. Note that the block idempotents of $k[C_G(P)]$ coincide with those of $k[PC_G(P)]$. Also, $G$ acts by conjugation on the set of Brauer pairs of $kG$ by ${}^g(P,e) = ({}^gP, {}^ge)$. We denote the $G$-stabilizer of $(P,e)$ by $N_G(P,e)$. We have $C_G(P)\leq N_G(P,e) \leq N_G(P)$. For Brauer pairs $(P,e)$ and $(Q,f)$ of $kG$, one writes $(Q,f)\trianglelefteq (P,e)$ if $Q \leq P \leq N_G(Q,f)$ and $\br_P(f)e = e$ (or equivalently, $\br_P(f)e \neq 0$). The transitive closure of the relation $\trianglelefteq $ on the set of Brauer pairs of $kG$ is denoted by $\leq$. It is a partial order and is preserved by $G$-conjugation.
			
			If $e$ is a central idempotent of $kG$, then $\br_P(e)$ is an $N_G(P)$-stable central idempotent of $kC_G(P)$ and also a central idempotent of $kN_G(P)$. If $B$ is a sum of blocks of $kG$, we say that a Brauer pair $(P,e)$ is a \textit{$B$-Brauer pair} if $\br_P(e_B)e = e$, or equivalently if $\br_P(e_B)e \neq 0$. Every Brauer pair is a $B$-Brauer pair for a unique block $B$ of $kG$, and in this case, $(\{1\}, e_B)\leq (P,e)$. The set of $B$-Brauer pairs is closed under conjugation and if $(Q,f) \leq (P,e)$ are Brauer pairs, $(Q,f)$ is a $B$-Brauer pair if and only if $(P,e)$ is. The set of $B$-Brauer pairs is denoted by $\calB\calP(B)$.
			
			For a sum $B$ of blocks of $\calO G$, we simply define $\calB\calP(B) = \calB\calP(\overline{B})$. Thus Brauer pairs are by default viewed as pairs $(P,e)$, where $e$ is an idempotent of a group algebra over $k$. It may sometimes be convenient to lift the idempotent to the corresponding idempotent over $\calO$, and we denote the set of resulting pairs by $\calB\calP_\calO(B)$.
			
			\item A \textit{defect group} of a block $B$ of $kG$ is a subgroup $D$ of $G$, minimal with respect to the property that $e_B \in \tr^G_D(kG^D)$. Equivalently, it is a subgroup $D$ for which $\Delta(D)$ is a vertex of $B$ viewed as a $k[G\times G]$-module. Equivalently, it is a $p$-subgroup $D$ of $G$, maximal with respect to the property that $\br_D(e_B) \neq 0$. The defect groups of $B$ form a $G$-conjugacy class of $p$-subgroups of $G$. If $P$ is a normal $p$-subgroup of $G$, then $P$ is contained in each defect group of each block $B$, and $e_B$ is contained in the $k$-span of the $p'$-elements of $C_G(P)$. Similarly one defines defect groups over $\calO$, and the same structural results hold. 
		\end{enumerate}
	\end{remark}
	
	We first state some well-known structural results, see \cite{BrP80} for details. 
	
	\begin{prop}
		\begin{enumerate}
			\item For each Brauer pair $(P,e)$ of $kG$ and each $Q\leq P$, there exists a unique Brauer pair $(Q,f)$ such that $(Q,f)\leq (P,e)$. In particular, if $(R, g) \leq (P,e)$ are Brauer pairs of $kG$, and $R\leq Q\leq P$, there exists a unique Brauer pair $(Q,f)$ of $kG$ satisfying $(R,g) \leq (Q,f)\leq (P,e)$.
			\item Let $(Q,f)$ and $(P,e)$ be Brauer pairs of $kG$. Then $(Q,f) \leq (P,e)$ and $Q \trianglelefteq P$ if and only if $(Q,f)\trianglelefteq (P,e)$.
			\item Let $B$ be a block of $kG$. The maximal elements in the poset $\calB\calP(B)$ form a single full conjugacy class. Further, a $B$-Brauer pair $(P,e)$ is a maximal $B$-Brauer pair if and only if $P$ is a defect group of $B$. 
		\end{enumerate}
	\end{prop}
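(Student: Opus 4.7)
The proposition is a classical triad of results from Alperin--Brou\'e--Puig Brauer pair theory, and my plan follows the approach in \cite{BrP80}, with part (a) as the technical heart from which (b) and (c) follow.

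For part (a), since $Q \leq P$ forces $P \leq N_G(Q)$ and $C_G(P) \subseteq C_G(Q)$, the Brauer homomorphism $\br_P^{C_G(Q)}: (kC_G(Q))^P \to kC_G(P)$ is a well-defined algebra homomorphism, since the fixed-point subgroup of the $P$-action on $C_G(Q)$ is exactly $C_G(P)$. The key technical step is to show that the $P$-stable primitive block idempotents of $Z(kC_G(Q))$ (equivalently, blocks $f$ with $P \leq N_G(Q,f)$) map under $\br_P$ to pairwise orthogonal central idempotents of $kC_G(P)$ summing to $1$. Orthogonality is immediate from multiplicativity of $\br_P$; the sum-to-$1$ claim rests on the fact that non-$P$-stable primitive idempotents of $Z(kC_G(Q))$ occur in $P$-orbits of size $>1$, and the sum over each such orbit is a relative trace from a proper subgroup of $P$, hence annihilated by $\br_P$. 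For a given block $e$ of $kC_G(P)$, exactly one such $P$-stable idempotent $f$ then satisfies $\br_P(f)e \neq 0$, giving the unique $f$ with $(Q,f) \trianglelefteq (P,e)$. Uniqueness of $(Q,f)$ in the weaker relation $\leq$ follows by induction on the length of a witnessing chain $(Q,f)=(Q_0,g_0)\trianglelefteq\cdots\trianglelefteq (Q_n,g_n)=(P,e)$: by induction $g_1$ is uniquely determined by $(P,e)$ and $Q_1$, and then the one-step uniqueness just established determines $f$ from $(Q_1,g_1)$ and $Q_0 = Q$. The iterated insertion statement is immediate from this uniqueness.

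Part (b) is essentially a corollary: the direction $\trianglelefteq \Rightarrow \leq$ (together with $Q \trianglelefteq P$, which is built into the definition of $\trianglelefteq$) is immediate. For the converse, assume $Q \trianglelefteq P$ and $(Q,f) \leq (P,e)$; by part (a) there is a unique $\tilde f$ with $(Q, \tilde f) \trianglelefteq (P,e)$, and by the uniqueness clause of (a) we conclude $\tilde f = f$, so $(Q,f) \trianglelefteq (P,e)$. For part (c), I would combine (a) with the characterization of defect groups as $p$-subgroups $D$ maximal with $\br_D(e_B) \neq 0$. Any $B$-Brauer pair $(P,e)$ satisfies $\br_P(e_B) \neq 0$, so $P$ lies in some defect group; conversely, for any defect group $D$, the sum-to-$1$ argument of (a) applied with $Q=D$ supplies a block $e_D$ of $kC_G(D)$ with $\br_D(e_B)e_D = e_D$, yielding a $B$-Brauer pair $(D, e_D)$. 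A pair $(P,e)$ is maximal precisely when $P$ is a defect group, since otherwise $P$ sits properly inside some defect group and (a) extends $(P,e)$ upward. Conjugacy of maximal $B$-Brauer pairs is then a Sylow-style argument: after $G$-conjugating to align the first coordinates with a common defect group $D$, the $N_G(D)$-action on the finite set $\{f : \br_D(e_B)f = f\}$ is transitive, pairing up any two such maximal $B$-Brauer pairs.

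The main obstacle throughout is the sum-to-$1$ claim in (a) -- concretely, identifying $\ker(\br_P)\cap (kC_G(Q))^P$ with $\sum_{Q'<P}\tr^P_{Q'}((kC_G(Q))^{Q'})$ and carefully tracking the $P$-orbit structure on primitive idempotents of $Z(kC_G(Q))$. Once this structural lemma is established, the remainder of the proposition unfolds formally.
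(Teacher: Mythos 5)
The paper itself does not prove this proposition; it is stated as a classical result with the reference \cite{BrP80}, so there is no in-paper argument to compare against. Judged on its own, your sketch correctly isolates the central one-step lemma (the $P$-stable blocks of $kC_G(Q)$ have pairwise orthogonal images under $\br_P$ summing to $1$, so exactly one of them hits $e$), but there are genuine gaps.

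First, the phrase ``$Q\leq P$ forces $P\leq N_G(Q)$'' is false; it holds only when $Q\trianglelefteq P$, which is precisely the hypothesis of the one-step case. The general case must be reached via a subnormal chain $Q=Q_0\trianglelefteq Q_1\trianglelefteq\cdots\trianglelefteq Q_n=P$, which exists because $P$ is a $p$-group, but the premise as written is incorrect. Second, and more substantially, your uniqueness induction only shows that the Brauer pairs sitting over a \emph{fixed} subgroup chain are determined. It does not address the possibility that $(Q,f)\leq(P,e)$ and $(Q,f')\leq(P,e)$ are witnessed by chains passing through different $Q_1\neq Q_1'$; proving the chain-independence of the resulting block at $Q$ is exactly the technical heart of the Alperin--Brou\'e theorem and requires a separate compatibility argument (one standard route is to show the chain can always be normalized to pass through $N_P(Q)$ and compare), not a naive induction on chain length. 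Third, in part (c) the assertion that the $N_G(D)$-action on $\{f:\br_D(e_B)f=f\}$ is transitive is not a ``Sylow-style'' consequence of what precedes it --- it is essentially the conjugacy statement to be proved. The standard argument establishes that $\br_D(e_B)$ is a \emph{primitive} idempotent of $Z(kC_G(D))^{N_G(D)}$ by using $e_B\in\tr^G_D(kG^D)$ and a Mackey-type decomposition; as written, your appeal to transitivity is circular. Part (b), by contrast, does follow cleanly from (a) as you indicate.
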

	
	Note that $G\times G$ acts on $kG$ by the $k[G\times G]$-module structure, and that $G$ acts on $kG$ by $G$-conjugation. These actions are linked by the diagonal embedding $\Delta: g\mapsto (g,g)$, so that $kG^{\Delta (H)} = kG^{H}$. The following are easy verifications. 
	
	\begin{prop}
		Let $B$ be a sum of blocks of $kG$ and let $(Q,e)$ be a $B$-Brauer pair. Set $I := N_G(Q,e)$. 
		\begin{enumerate}
			\item We have $B(\Delta(Q))\cong k[C_G(Q)]\br_Q(e_B)$ as $k[N_{G\times G} (\Delta(Q))]$-modules.
			\item We have $eB(\Delta(Q))e \cong k[C_G(Q)]e$ as $k[N_{I\times I}(\Delta(Q))]$-modules. 
		\end{enumerate}
	\end{prop}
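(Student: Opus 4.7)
The proposition is essentially a computation, and the strategy is to reduce both statements to the fundamental fact that the Brauer construction applied to a group algebra viewed as a bimodule yields the group algebra of the centralizer.

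For part (a), the plan is to realize $B$ as $e_B \cdot kG$, viewed as a left $k[G \times G]$-module through the standard bimodule-to-module identification $(g_1,g_2) \cdot x = g_1 x g_2^{-1}$. The idempotent $e_B \otimes 1 \in k[G \times G]$ is $(G\times G)$-central, in particular fixed under conjugation by any subgroup containing $\Delta(Q)$. Applying Lemma \ref{lem:braur construction for iM} with $G' = G \times G$, $M = kG$, $P = \Delta(Q)$, $H = G\times G$, and $i = e_B \otimes 1$ gives a natural isomorphism of $k[N_{G\times G}(\Delta(Q))]$-modules
\[
B(\Delta(Q)) \;\cong\; \br^{G\times G}_{\Delta(Q)}(e_B \otimes 1) \cdot (kG)(\Delta(Q)).
\]
Two computations then finish part (a). First, since $C_{G\times G}(\Delta(Q)) = C_G(Q) \times C_G(Q)$, the Brauer homomorphism for $G\times G$ at $\Delta(Q)$ sends $e_B \otimes 1$ to $\br_Q(e_B) \otimes 1$. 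Second, $kG$ is a permutation $k[G\times G]$-module under the conjugation-type action, so by Proposition \ref{brauerppermproperties}(b), $(kG)(\Delta(Q))$ has a $k$-basis given by the image of the $\Delta(Q)$-fixed elements of $G$, namely $C_G(Q)$; this identifies $(kG)(\Delta(Q)) \cong kC_G(Q)$ as a $k[N_{G\times G}(\Delta(Q))]$-module. Combining these, and using that $\br_Q(e_B) \in Z(kC_G(Q))$, we obtain $B(\Delta(Q)) \cong kC_G(Q)\br_Q(e_B)$.

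For part (b), the plan is to take the isomorphism of part (a), restrict its $k[N_{G\times G}(\Delta(Q))]$-equivariance to the subgroup $N_{I\times I}(\Delta(Q))$, and then multiply by $e$ on both sides. Because $e$ is a block idempotent of $kC_G(Q)$ it lies in $Z(kC_G(Q))$, and because $(Q,e)$ is a $B$-Brauer pair we have $\br_Q(e_B)\,e = e$. Hence
\[
e \cdot \bigl(kC_G(Q)\,\br_Q(e_B)\bigr) \cdot e \;=\; kC_G(Q)\,\br_Q(e_B)\,e \;=\; kC_G(Q)\,e.
\]
To upgrade this from a vector space isomorphism to an isomorphism of $k[N_{I\times I}(\Delta(Q))]$-modules, one checks that left and right multiplication by $e$ commutes with the $N_{I\times I}(\Delta(Q))$-action on $kC_G(Q)\br_Q(e_B)$: for $(g_1,g_2) \in N_{I\times I}(\Delta(Q))$ we have $g_1, g_2 \in I = N_G(Q,e)$, so $g_1 e g_1^{-1} = e = g_2 e g_2^{-1}$, which gives $(g_1,g_2)\cdot(e y e) = e\bigl((g_1,g_2)\cdot y\bigr)e$. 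This shows that the subspace $e B(\Delta(Q)) e$ is $N_{I\times I}(\Delta(Q))$-stable and that the above identification is equivariant.

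The only real subtlety in the argument is bookkeeping: one must consistently track the conversion between the $(kG,kG)$-bimodule structure on $B$, the left $k[G\times G]$-module structure used for Brauer constructions, and the various centralizer/normalizer identifications arising from Lemma \ref{prop:proj&kernelsforspecialsubgroups}. Once these identifications are fixed, both parts reduce to direct applications of Lemma \ref{lem:braur construction for iM} together with the structure of $(kG)(\Delta(Q))$ as a permutation module, with no further obstacle.
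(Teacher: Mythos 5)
Your proof is correct. The paper itself does not spell out an argument here (it simply cites \cite[Proposition 4.3]{BoPe20}), but the route you take — applying Lemma~\ref{lem:braur construction for iM} with $i = e_B\otimes 1$ to reduce to the Brauer quotient of the conjugation bimodule $kG$, identifying $(kG)(\Delta(Q))\cong kC_G(Q)$ via Proposition~\ref{brauerppermproperties}(b), computing $\br^{G\times G}_{\Delta(Q)}(e_B\otimes 1)=\br_Q(e_B)\otimes 1$, and then for (b) cutting by $e$ and checking $N_{I\times I}(\Delta(Q))$-equivariance of $e(-)e$ directly — is exactly the standard argument behind the cited result. One small point worth making explicit: the isomorphism in (a) is $k$-linear and $C_G(Q)\times C_G(Q)$-equivariant, hence $(kC_G(Q),kC_G(Q))$-bimodule equivariant, which is what lets you transport left and right multiplication by the idempotent $e$ through it in part (b); you gesture at this but do not state it, and since $e$ is not a group element this step does deserve a word. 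All the bookkeeping (the idempotent $e_B\otimes 1$ being $G\times G$-central, the identification $C_{G\times G}(\Delta(Q))=C_G(Q)\times C_G(Q)$, and $\br_Q(e_B)e=e$ from the $B$-Brauer-pair hypothesis) is handled correctly.
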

    \begin{proof}
        This is \cite[Proposition 4.3]{BoPe20}, with the unused assumption that $B$ is a block algebra removed. 
    \end{proof}
	
	\begin{remark}
		We next recall the definition of a fusion system of a block. See \cite{AKO11} for more details, proofs, and the definition of an abstract fusion system and a saturated fusion system. Let $B$ be a block of $kG$ and let $(P,e)$ be a maximal $B$-Brauer pair. For $Q \leq P$, denote by $e_Q$ the unique block idempotent of $kC_G(Q)$ for which $(Q,e_Q)\leq (P,e)$.  
		
		\textit{The fusion system of $B$} associated to $(P,e)$ is the category $\calF$ whose objects are subgroups of $P$ and whose morphism set $\Hom_\calF(Q,R)$ is the set of group homomorphisms arising as conjugation maps $c_g: Q\to R$ where $g \in G$ satisfies ${}^g(Q,e_Q) \leq (R,e_R)$. It is well known that $\calF$ is a saturated fusion system on $P$.
		
		For an abstract fusion system $\calF$ on a $p$-group $P$, a subgroup $Q\leq P$ is \textit{fully $\calF$-centralized} (resp. \textit{fully $\calF$-normalized}) if $|C_P(Q)| \geq |C_P(Q')|$ for all (resp. $|N_P(Q)| \geq |N_P(Q')|$) for all subgroups $Q'$ of $P$ which are $\calF$-isomorphic to $Q$. Moreover, $Q$ is \textit{$\calF$-centric} if $C_P(Q') = Z(Q')$ for all $Q'$ that are $\calF$-isomorphic to $Q$.
	\end{remark}
	
	The following theorem of Alperin and Brou\'e in \cite{AB79} characterizes $\calF$-centric, fully normalized, and fully centralized subgroups for block fusion systems. We give the formulation presented in \cite{L06}.
	
	\begin{prop}{\cite[Theorems 2.4, 2.5]{L06}}
		Let $B$ be a block of $kG$, let $(P,e)$ be a maximal $B$-Brauer pair, and let $\calF$ be the fusion system associated to $B$ and $(P,e)$. For every subgroup $Q\leq P$, denote by $e_Q$ the unique block idempotent of $k[C_G(Q)]$ for which $(Q,e_Q)\leq (P,e)$. 
		\begin{enumerate}
			\item A subgroup $Q$ of $P$ is fully $\calF$-centralized if and only if $C_P(Q)$ is a defect group of the block algebra $k[C_G(Q)]e_Q$. In this case, $(C_P(Q), e_{QC_P(Q)})$ is a maximal Brauer pair of $k[C_G(Q)]e_Q$. In particular, $Q$ is $\calF$-centric if and only if $Z(Q)$ is a defect group of $k[C_G(Q)]e_Q$.
			\item A subgroup $Q$ of $P$ is fully $\calF$-normalized if and only if $N_P(Q)$ is a defect group of the block algebra $k[N_G(Q,e_Q)]e_Q$. In this case, $(N_P(Q), e_{N_P(Q)})$ is a maximal $k[N_G(Q,e_Q)]e_Q$-Brauer pair. 
		\end{enumerate}
	\end{prop}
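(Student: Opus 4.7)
My approach leverages the Alperin--Brou\'e correspondence: for any $B$-Brauer pair $(Q,e_Q) \leq (P,e)$, the $k[C_G(Q)]e_Q$-Brauer pairs $(R,f)$ (with $R \leq C_G(Q)$ a $p$-subgroup) correspond order-preservingly to the $B$-Brauer pairs $(QR,f)$ of $kG$ satisfying $(Q,e_Q) \trianglelefteq (QR,f)$, and an analogous correspondence using $N_G(Q,e_Q)$ in place of $C_G(Q)$ handles part (b). Under these correspondences, defect groups of the relevant block algebras appear as first components of maximal Brauer pairs. A preliminary point is that $e_Q$ is $N_G(Q,e_Q)$-stable and central, so $k[N_G(Q,e_Q)]e_Q$ is a well-defined sum of blocks.

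For part (a), I first establish a lower bound: the chain $(Q,e_Q) \trianglelefteq (QC_P(Q), e_{QC_P(Q)}) \leq (P,e)$ transfers under the correspondence to a $k[C_G(Q)]e_Q$-Brauer pair with first component $C_P(Q)$, so $C_P(Q)$ embeds into some defect group of $k[C_G(Q)]e_Q$. For the forward implication, assume $Q$ is fully $\calF$-centralized and let $D$ be a defect group, realized as the first component of some maximal pair $(D,f)$. Its image $(QD,f)$ is then a maximal $B$-Brauer pair of $kG$ and is $G$-conjugate to $(P,e)$ via some $g \in G$. Setting $Q' := g^{-1}Qg \leq P$ (which is $\calF$-isomorphic to $Q$ via $c_g$), the conjugate $g^{-1}Dg$ centralizes $Q'$ and lies in $P$, hence in $C_P(Q')$; the full centralization assumption then yields $|D| \leq |C_P(Q')| \leq |C_P(Q)|$, which combined with the lower bound forces equality and identifies $C_P(Q)$ as a defect group. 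For the converse, if $C_P(Q)$ is a defect group of $k[C_G(Q)]e_Q$ and $Q'$ is $\calF$-isomorphic to $Q$ via conjugation by $g \in G$, then conjugation yields an algebra isomorphism $k[C_G(Q)]e_Q \cong k[C_G(Q')]e_{Q'}$ whose defect groups have order $|C_P(Q)|$; since $C_P(Q')$ is contained in a defect group of $k[C_G(Q')]e_{Q'}$, we obtain $|C_P(Q')| \leq |C_P(Q)|$.

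Part (b) proceeds by the same template with centralizers replaced by normalizers: the chain $(Q,e_Q) \leq (N_P(Q), e_{N_P(Q)}) \leq (P,e)$ yields the lower bound $|N_P(Q)| \leq $ (order of defect group of $k[N_G(Q,e_Q)]e_Q$), and the conjugacy-plus-size comparison argument provides both directions. The $\calF$-centric claim then follows from part (a): $\calF$-centricity forces $|C_P(Q')| = |Z(Q')| = |Z(Q)|$ to be constant across the $\calF$-isomorphism class, so $Q$ is automatically fully $\calF$-centralized and part (a) identifies $Z(Q) = C_P(Q)$ as a defect group; conversely, if $Z(Q)$ is a defect group, applying part (a) to each $\calF$-isomorphic $Q'$ yields the sandwich $|Z(Q')| \leq |C_P(Q')| \leq |Z(Q)| = |Z(Q')|$, forcing equality and hence $\calF$-centricity.

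The main obstacle I anticipate is the precise formulation of the Brauer-pair correspondence in part (b): since $C_{N_G(Q,e_Q)}(R)$ is generally strictly smaller than $C_G(R)$, one must carefully track how block idempotents transfer between the algebras $k[C_G(R)]$ and $k[C_{N_G(Q,e_Q)}(R)]$, invoking Brauer's extended first main theorem to identify $e_{N_P(Q)}$ with a block of the appropriate centralizer algebra inside $N_G(Q,e_Q)$. The remaining arguments are essentially combinatorial consequences of this correspondence, leveraging only the transitivity of Brauer pair containment and the $G$-conjugacy of maximal $B$-Brauer pairs.
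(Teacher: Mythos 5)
The paper does not prove this proposition; it cites it verbatim from \cite{L06} as a formulation of the Alperin--Brou\'e theorem, so there is no internal proof to compare against. Evaluating your argument on its own terms: your strategy via the Alperin--Brou\'e correspondence (between $k[C_G(Q)]e_Q$-Brauer pairs $(R,f)$ and $B$-Brauer pairs $(QR,f)$ lying normally above $(Q,e_Q)$) is the standard and correct route, and the lower-bound step, the converse direction of (a), and the derivation of the $\calF$-centric statement are essentially sound.

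There is, however, a genuine gap in the forward implication of part (a). You assert that the image $(QD,f)$ of a maximal $k[C_G(Q)]e_Q$-Brauer pair under the correspondence is itself a maximal $B$-Brauer pair of $kG$. This is false in general: the correspondence identifies $k[C_G(Q)]e_Q$-Brauer pairs with the sub-poset of $B$-Brauer pairs $(QR,f)$ satisfying $(Q,e_Q)\trianglelefteq(QR,f)$, and maximal elements of that sub-poset need not be maximal in $\calB\calP(B)$. Indeed, whenever $Q\leq P$ is fully $\calF$-centralized but $QC_P(Q)$ is a proper subgroup of $P$ (for instance $Q$ abelian with $C_P(Q)=Q$, which already occurs in the trivial fusion system of a nonabelian $p$-group such as $D_8$), the very statement being proved forces $D$ to be conjugate to $C_P(Q)$, so $(QD,f)$ is conjugate to $(QC_P(Q),e_{QC_P(Q)})$, a \emph{proper} subpair of $(P,e)$. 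The repair is straightforward and preserves every inequality you use: embed $(QD,f)$ into a maximal $B$-Brauer pair $(S,g)$, which by maximality is $G$-conjugate to $(P,e)$ via some $x$; then ${}^{x}D$ lies in $P$ and centralizes ${}^{x}Q$, giving $|D|\leq|C_P({}^{x}Q)|\leq|C_P(Q)|$ as needed. A secondary, unresolved issue is part (b): you correctly flag the mismatch between $C_{N_G(Q,e_Q)}(R)$ and $C_G(R)$, but the normalizer analogue of the correspondence is substantially more delicate (one must pass through $k[QC_G(Q)]$ and carefully track blocks under the $N_G(Q,e_Q)$-action), and ``invoking Brauer's extended first main theorem'' is a pointer to the right toolbox rather than a proof.
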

	
	\section{Trivial source rings, $p$-permutation equivalences, and splendid Rickard equivalences}

    Throughout this section we assume that $G$ and $H$ are finite groups and that $(K, \calO, k)$ is a $p$-modular system large enough for $G$ and $H$. We denote by $R$ either $\calO$ or $k$. Let $e$ be a central idempotent of $RG$ and let $f$ be a central idempotent of $RH$.
 
	\begin{definition}
		\begin{enumerate}
			\item We denote by $T(RGe)$ the \textit{trivial source group} for $RGe$, the Grothendieck group of the category ${}_{RGe}\textbf{triv}$ with respect to direct sums. The group $T(RGe)$ is free as an abelian group with standard basis given by the elements $[M]$, where $M$ runs through a set of representatives of isomorphism classes of trivial source $RGe$-modules. If $e = 1$, then $T(RG)$ forms a commutative ring under multiplication induced by $\otimes_R$, denoted $\cdot_R$; we call $T(RG)$ the \textit{trivial source ring}. We always view $T(RGe)$ as a subgroup of $T(RG)$ in the natural way. Moreover, multiplying a $p$-permutation $RG$-module by $e$ induces a projection $T(RG)\to T(RGe)$. In this way, if $b_1, \dots, b_k$ are the blocks of $RG$, then we have a direct sum decomposition of abelian groups $T(RG) = T(RGb_1) \oplus \cdots \oplus T(RGb_k)$.
			
			Taking duals induces a self-inverse group isomorphism $T(RG)\to T(RG)$. We write $O(T(RG))$ for the \textit{orthogonal unit group} of the trivial source ring, that is, the subgroup of $T(RG)^\times$ consisting of all virtual $p$-permutation modules $\gamma$ whose inverse is its dual, i.e. $\gamma \cdot_R \gamma^* = [R] = \gamma^* \cdot_R \gamma$. This notion does not extend to idempotents because if $\gamma \in O(T(RGe))$, then $\gamma^* \in O(T(RGe^*))$, where $(-)^*$ is the map induced by the involution $g\mapsto g\inv$. 
						
			\item Similarly, we define $T(RGe,RHf): = T(R[G\times H])(e\otimes f^*)$ where we identify $R[G\times H]$ with $RG\otimes_R RH$ in the natural way. In the case where $e=f$, $T(RGe,RGe)$ has a ring structure induced by the tensor product $\otimes_{RGe}$, written $\cdot_G$. We denote by $T^\Delta(RGe, RHf)$ the subgroup of $T(RGe, RHf)$ the subgroup generated by all trivial source $(RGe, RHf)$-bimodules which have twisted diagonal vertices.  
			Taking bimodule duals induces a group isomorphism $T(RGe, RHf) \to T(RHf, RGe)$, which has inverse again induced by taking duals. Denote by $O(T(RGe,RGe))$ the subgroup of $T(RGe,RGe)^\times$ consisting of all virtual $p$-permutation bimodules $\gamma$ whose inverse is its dual, i.e. $\gamma \cdot_G \gamma^* = [RGe] = \gamma^* \cdot_G \gamma$. 
			
			Finally, observe that any $M \in T(RGe,RHf)$ induces a group homomorphism $M\cdot_{H} -: T(RHf)\to T(RGe)$. If $M \in T(RGe, RHf)^\times$, it is an isomorphism. 
			
			\item The following definition is due to Boltje and Xu in \cite{BoXu07}. A \textit{$p$-permutation equivalence between $RGe$ and $RHf$ is an element $\gamma \in T^\Delta(RGe, RHf)$} satisfying \[\gamma \cdot_H \gamma^* = [RGe] \in T^\Delta(RGe, RGe) \text{ and } \gamma^*\cdot_G\gamma = [RHf] \in T^\Delta(RHf, RHf).\] Denote the set of $p$-permutation equivalences between $RGe$ and $RHf$ by $T^\Delta_o(RGe, RHf)$. In the case $e=f$, $T^\Delta_o(RGe, RGe) = O(T(RGe,RGe))\cap T^\Delta(RGe,RGe)$. We denote this by $O(T^\Delta(RGe,RGe))$ as well. It follows that any $p$-permutation equivalence between $RGe$ and $RHf$ induces a group isomorphism $T(RGe) \cong T(RHf)$. 
		\end{enumerate}
	\end{definition}

    If $\calA$ is an additive category, then we denote by $\Ch^b(\calA)$ the category of bounded chain complexes in $\calA$ and by $K^b(\calA)$ the bounded homotopy category of chain complexes in $\calA$. 
     
	\begin{definition}\label{def:splendid rickard}
		Let $A$ be a sum of blocks of $RG$ and $B$ a sum of blocks of $RH$. A \textit{splendid Rickard equivalence} for $A$ and $B$ is a bounded chain complex $C$ of $p$-permutation $(A, B)$-bimodules which satisfy the following conditions:
		\begin{enumerate}
             \item \label{def:splendid rickard (a)} For every $n\in \Z$, each indecomposable direct summand of $C_n$ has a twisted diagonal subgroup of $G\times H$ as a vertex. 
			\item $C \otimes_{B} C^* \simeq A$ in $K^b({}_A\mathbf{mod}_A)$ and $C^* \otimes_{A} C \simeq B$ in $K^b({}_B\mathbf{mod}_B)$, where $A$ and $B$ are considered as chain complexes concentrated in degree $0$.
		\end{enumerate}
		A splendid Rickard equivalence $C$ of $(A, B)$-bimodules induces an equivalence of categories $C \otimes_{B} -: K^b({}_{B}\textbf{triv}) \to K^b({}_{A}\textbf{triv})$. 
	\end{definition}
 
    % The next lemma is a general fact which will be used primarily in Section 7. We state it here because it provides a clean proof of Proposition~\ref{prop:src indecomposable}.
    
    \begin{lemma}\label{lem:contractible tensor product of complexes}Let $A$ and $B$ be symmetric $R$-algebras. Suppose that $C$ is a bounded chain complex of $(A,B)$-bimodules which are projective as right $B$-modules. Then $C\otimes_B C^* \simeq 0$ in $K^b({}_A \mathbf{mod}_A)$ if and only if $C\simeq 0$ in $K^b({}_A \mathbf{mod}_B)$.
    
    \begin{proof}
        Since $B$ is symmetric and the terms of $C$ are projective as right $B$-modules, we have an isomorphism $C\otimes_B C^* \cong \mathrm{End}_{B^{op}}(C)$ of chain complexes of $(A,A)$-bimodules. First, suppose $\mathrm{End}_{B^{op}}(C)$ is contractible. The complex $\mathrm{End}_{A\otimes_k B^{op}}(C)$ is obtained from the complex $\mathrm{End}_{B^{op}}(C)$ by taking $A$-fixed points. Since taking $A$-fixed points is an additive functor, $\mathrm{End}_{A\otimes_R B^{op}}(C)$ is also contractible. Taking homology we obtain
        \[
        0 = H_0(\mathrm{End}_{A\otimes_R B^{op}}(C)) \cong\End_{K({}_{A}\mathbf{mod}_{B})}(C).
        \]
        We conclude that $C\simeq 0$ in $K({}_{A}\mathbf{mod}_{B})$. The converse is immediate. 
        \end{proof}
        \end{lemma}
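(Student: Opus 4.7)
The strategy is to convert the statement about contractibility of $C\otimes_B C^*$ into a statement about the endomorphism complex $\End_{B^{op}}(C)$, then read off self-homotopies of $C$ via an $H_0$ computation. The converse direction is automatic from additivity of the tensor product.

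First I would establish a natural isomorphism
\[
C\otimes_B C^* \;\cong\; \End_{B^{op}}(C)
\]
of chain complexes of $(A,A)$-bimodules. Degree-wise, when $X$ is finitely generated projective as a right $B$-module and $B$ is symmetric, the symmetrizing form $\lambda: B\to R$ yields an isomorphism $X^* \cong \Hom_{B^{op}}(X, B)$ of $(B, \cdot)$-bimodules, and the Yoneda-type trace isomorphism $X \otimes_B \Hom_{B^{op}}(X, B) \cong \End_{B^{op}}(X)$ holds whenever $X$ is finitely generated projective on the appropriate side. Both hypotheses on $B$ and on the terms of $C$ are explicitly assumed, so this assembles into a chain-level isomorphism compatible with the external $(A,A)$-bimodule structure.

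Under this identification, the hypothesis $C\otimes_B C^*\simeq 0$ in $K^b({}_A\mathbf{mod}_A)$ gives an $(A,A)$-bilinear contracting homotopy $s$ on $\End_{B^{op}}(C)$. The key observation is that the $A$-fixed points of $\End_{B^{op}}(C)$ under the conjugation bimodule structure are precisely the $(A,B)$-bimodule endomorphisms, i.e.\ $\End_{A\otimes_R B^{op}}(C)$. Because $s$ is itself $(A,A)$-bilinear, it preserves this fixed-point subcomplex, producing a contracting homotopy on $\End_{A\otimes_R B^{op}}(C)$. Taking $H_0$ and invoking the standard identification $H_0(\End_{A\otimes_R B^{op}}(C)) \cong \End_{K^b({}_A\mathbf{mod}_B)}(C)$ forces $\id_C = 0$ in $K^b({}_A\mathbf{mod}_B)$, which is exactly $C\simeq 0$ there. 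The converse is immediate: tensoring a null-homotopic complex by a fixed bounded complex of bimodules remains null-homotopic.

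The step I expect to be trickiest to set up cleanly is the degree-wise Frobenius identification $C \otimes_B C^* \cong \End_{B^{op}}(C)$ together with its compatibility with differentials and the outer $(A,A)$-bimodule structure — one has to track through the symmetrizing form carefully. Once this identification is in hand, the remainder is formal: a morphism in $K^b({}_A\mathbf{mod}_A)$ is by definition $(A,A)$-bilinear, so any contracting homotopy restricts to the $A$-invariant subcomplex, and $H_0$ of that subcomplex is the homotopy-category endomorphism ring.
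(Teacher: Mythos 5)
Your proposal takes essentially the same approach as the paper: convert $C\otimes_B C^*$ to $\End_{B^{op}}(C)$ via the Frobenius isomorphism for symmetric algebras, pass to the $A$-invariant subcomplex $\End_{A\otimes_R B^{op}}(C)$, and read off $\id_C = 0$ from $H_0$. The only cosmetic difference is that you argue the $(A,A)$-bilinear contracting homotopy restricts to the fixed-point subcomplex, while the paper phrases the same step as the additive functor of taking $A$-fixed points preserving contractibility.
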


        If $A$ and $B$ are blocks of $RG$ and $RH$, then a splendid Rickard equivalence for $A$ and $B$ is an indecomposable object in the homotopy category of chain complexes of $(A,B)$-bimodules. 
        
    \begin{prop}\label{prop:src indecomposable}
        Suppose that $A$ is a block of $RG$ and $B$ is a block of $RH$. Let $C$ be a splendid Rickard equivalence for $A$ and $B$. Then $C$ is indecomposable in $K^b({}_A\mathbf{mod}_B)$. In particular, $C$ has, up to isomorphism, a unique indecomposable noncontractible direct summand which is a splendid Rickard equivalence for $A$ and $B$.
        
    \end{prop}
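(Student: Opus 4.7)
The strategy is to prove indecomposability in the homotopy category first by exhibiting a local endomorphism ring, and then apply Krull--Schmidt in $\Ch^b({}_A\mathbf{mod}_B)$ to isolate a single non-contractible direct summand.

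First I would show that $B$ (viewed as a complex concentrated in degree zero) is indecomposable in $K^b({}_B\mathbf{mod}_B)$ by computing $\End_{K^b({}_B\mathbf{mod}_B)}(B) = Z(B)$: any chain endomorphism is left multiplication by a central element, and there are no nonzero homotopies on a complex concentrated in a single degree. Since $B$ is a block, $Z(B)$ is local, so $B$ is indecomposable. Next, a standard Morita-type argument, using the isomorphisms $C\otimes_B C^* \simeq A$ and $C^*\otimes_A C \simeq B$ from Definition \ref{def:splendid rickard}, shows that $C\otimes_B - : K^b({}_B\mathbf{mod}_B) \to K^b({}_A\mathbf{mod}_B)$ is an equivalence of triangulated categories with quasi-inverse $C^*\otimes_A -$. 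Since this equivalence is additive and sends $B$ to $C$, indecomposability transports to give that $C$ is indecomposable in $K^b({}_A\mathbf{mod}_B)$.

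For the second assertion, Krull--Schmidt applies in $\Ch^b({}_A\mathbf{mod}_B)$ (the bimodule category is Krull--Schmidt and all complexes are bounded), so I would write $C \cong C_1 \oplus \cdots \oplus C_n$ with each $C_i$ indecomposable in $\Ch^b$. This decomposition descends to $K^b({}_A\mathbf{mod}_B)$, where indecomposability of $C$ forces all but one of the $C_i$ to be contractible. Denote the unique non-contractible summand by $C'$. Then $C \simeq C'$ in $K^b$ transports the splendid Rickard tensor-product conditions to $C'$, and each term of $C'$ inherits twisted diagonal vertices as a direct summand of the corresponding term of $C$. Hence $C'$ is a splendid Rickard complex for $A$ and $B$, and its uniqueness up to isomorphism is immediate from Krull--Schmidt.

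The main technical point to pin down is the Krull--Schmidt hypothesis on $\Ch^b({}_A\mathbf{mod}_B)$, which is standard but should be stated explicitly since we are working over both $k$ and $\calO$. A smaller subtlety, worth confirming for the $R=\calO$ case, is that the center of a block is local over $\calO$ as well as over $k$; this follows from the fact that the block idempotent lift is unique and $Z(\calO G e)$ reduces to $Z(kG\bar e)$.
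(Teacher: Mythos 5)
Your proof is correct, but the route to indecomposability is genuinely different from the paper's. The paper never explicitly invokes the fact that $C\otimes_B-$ is an equivalence of homotopy categories; instead it works entirely inside $\Ch^b({}_A\mathbf{mod}_B)$ and $K^b$: it applies Krull--Schmidt to write $C=\bigoplus_i C_i$, expands $A\simeq C\otimes_B C^*=\bigoplus_{i,j}C_i\otimes_B C_j^*$, uses indecomposability of $A$ to pin down a unique pair $(i,j)$, and then uses Lemma~\ref{lem:contractible tensor product of complexes} (a complex of bimodules projective on one side is contractible iff its self-tensor is) to force $i=j$ and $C_k\simeq 0$ for $k\ne i$ --- this simultaneously yields indecomposability in $K^b$ and the single noncontractible summand. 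Your approach instead computes $\End_{K^b({}_B\mathbf{mod}_B)}(B)=Z(B)$, observes it is local (which, as you note, holds over $\calO$ as well as $k$ because $b$ is a primitive idempotent in $Z(\calO G)$, so $Z(\calO Gb)$ has no nontrivial idempotents and is therefore local as a finite commutative $\calO$-algebra over a complete local ring), and transports indecomposability along the equivalence $C\otimes_B-$; Krull--Schmidt in $\Ch^b$ then handles only the final splitting. Both arguments are sound and rely on the same prerequisites (Krull--Schmidt over $\Ch^b$, projectivity on each side coming from twisted diagonal vertices). Your route is arguably more conceptual and makes the source of indecomposability transparent; the paper's route is more self-contained in that it avoids quoting the Morita-type equivalence and instead extracts everything from the single Lemma~\ref{lem:contractible tensor product of complexes}, which it reuses elsewhere. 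One small thing worth making explicit in your write-up: to conclude that $C$ has \emph{at least} one noncontractible summand, you implicitly use $C\not\simeq 0$, which follows immediately from your equivalence sending the nonzero object $B$ to $C$ (the paper obtains the same fact from Lemma~\ref{lem:contractible tensor product of complexes}).
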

    \begin{proof}By Lemma~\ref{lem:contractible tensor product of complexes}, $C$ is not contractible. Using the Krull-Schmidt theorem in $\Ch^b({}_A\mathrm{mod}_B)$, write $C = \bigoplus_{i=1}^n C_i$ where each $C_i$ is indecomposable. Then
    \[
    A\simeq C\otimes_B C^* = \bigoplus_{1\leq i,j\leq n}C_i \otimes C_j^*.
    \]
    Since $A$ is indecomposable as an $(A,A)$-bimodule, the Krull-Schmidt theorem in $K^b({}_A\mathrm{mod}_A)$ implies that there is a unique pair $(i,j)$ such that $C_i\otimes_B C_j^* \simeq A$ and $C_k\otimes_B C_l^*\simeq 0$ for all $(k,l) \neq (i,j)$. Thus, if $i \neq j$, then $C_k\otimes_B C_k^* \simeq 0$ for all $k=1,\ldots, n$ which forces $C_k\simeq 0$ by Lemma~\ref{lem:contractible tensor product of complexes}. This implies that $C\simeq 0$, which is a contradiction. We conclude that $C_i\otimes_B C_i^* \simeq A$ and $C_k\simeq 0$ for all $k\neq i$. Thus, $C\simeq C_i$ is indecomposable in $K^b({}_{A}\mathbf{mod}_B)$. Using the other homotopy equivalence, we also obtain that $B\simeq C^*\otimes_A C = C_i^*\otimes_A C_i$ so that $C_i$ is a splendid Rickard equivalence for $A$ and $B$.
        \end{proof}
 
    Historically, the notion of a ``splendid'' (\textbf{SPL}it \textbf{END}omorphism two-sided tilting complex \textbf{I}nduced from \textbf{D}iagonal subgroups) Rickard equivalence is due to Rickard, and was additionally communicated by Brou\'e. In Rickard's paper \cite{Ri96} such complexes are referred to as ``splendid equivalences,'' and the nomenclature ``Rickard complex'' was commonly used prior to refer to two-sided tilting complexes (see \cite{Br93} for example, in which Rickard's original definition also appears) due to his landmark paper \cite{R91}. Rickard's original definition assumed that $G$ and $H$ shared a common Sylow $p$-subgroup, and that the vertices of the modules in a splendid Rickard equivalence are contained inside the diagonal subgroup generated by the common Sylow. We remove that assumption here, instead asserting only that the modules are twisted diagonal. 
    
    Rickard proved that under his original definition, in order for a complex between two \emph{blocks} to induce a splendid equivalence, it is sufficient for only one of the homotopy equivalences to be satisfied. This argument extends to the more general definition given here. Note that the following result assumes $A$ and $B$ are single blocks, but the statement and argument can be modified to work for sums of blocks as well. 

    \begin{prop}\label{prop:src left implies right}
        Assume that $A$ and $B$ are blocks of $RG$ and $RH$, respectively. Let $C$ be a bounded chain complex of $p$-permutation $(A,B)$-bimodules which satisfies condition \ref{def:splendid rickard}\ref{def:splendid rickard (a)}. If $C\otimes_B C^* \simeq A$ in $K^b({}_A\mathbf{mod}_A)$, then $C$ is a splendid Rickard equivalence. 
    \begin{proof}
        An indecomposable $(A,B)$-bimodule $M$ with a twisted diagonal vertex $X$ is projective as a left $A$-module and a right $B$-module. Indeed, by the Mackey Formula and the Krull-Schmidt theorem, every indecomposable direct summand of $\mathrm{Res}_{G\times 1}^{G\times H}(M)$ is projective relative to $(G\times 1)\cap {}^{(g,h)}X$ for some $(g,h) \in G\times H$. Since ${}^{(g,h)}X$ is also twisted diagonal, this intersection is trivial. This shows that $M$ is projective as a left $A$-module. A similar argument shows that $M$ is projective as a right $B$-module. Hence, the result follows from \cite[4.14.16]{Li181}.
    \end{proof}
        
    \end{prop}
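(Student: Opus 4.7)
The plan is to reduce the proposition to a standard fact from tilting theory over symmetric algebras: when a bounded complex of bimodules is projective on both sides, any one-sided Rickard-type homotopy equivalence automatically produces the other. Condition \ref{def:splendid rickard}\ref{def:splendid rickard (a)} is tailor-made to supply this two-sided projectivity, so once that is established the conclusion should follow from Linckelmann's formulation of this principle, e.g.\ \cite[4.14.16]{Li181}.

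First I would verify that every indecomposable summand $M$ of each term $C_n$ is projective both as a left $A$-module and as a right $B$-module. Fix a twisted diagonal vertex $X \leq G \times H$ of $M$, so that $|k_1(X)| = |k_2(X)| = 1$. Restricting $M$ along $G \times 1 \hookrightarrow G \times H$ and invoking the Mackey decomposition together with Krull--Schmidt, each indecomposable summand of $\Res^{G \times H}_{G \times 1} M$ is projective relative to a subgroup of the form $(G \times 1) \cap {}^{(g,h)}X$ for some $(g,h) \in G \times H$. Conjugates of twisted diagonal subgroups are again twisted diagonal, so this intersection is trivial, and hence the restriction is projective as a left $RG$-module. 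Because $M$ lies in the block $A$, this yields projectivity as a left $A$-module. Restricting along $1 \times H$ instead and running the same argument gives projectivity of $M$ as a right $B$-module.

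With two-sided projectivity established, I would invoke \cite[4.14.16]{Li181}, which asserts that when $A$ and $B$ are symmetric $R$-algebras and $C$ is a bounded complex of $(A,B)$-bimodules which are projective on both sides, the conditions $C \otimes_B C^* \simeq A$ in $K^b({}_A \mathbf{mod}_A)$ and $C^* \otimes_A C \simeq B$ in $K^b({}_B \mathbf{mod}_B)$ are equivalent. Since blocks of finite group algebras are symmetric $R$-algebras, both $A$ and $B$ satisfy the hypothesis, and the given one-sided equivalence therefore produces the other, completing the verification of Definition \ref{def:splendid rickard}.

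The only substantive obstacle is the initial projectivity step, and even that is essentially bookkeeping: one must check that twisted diagonality is preserved under $G \times H$-conjugation (immediate from the characterization $|k_1|=|k_2|=1$) and that relative projectivity to the trivial subgroup is the same as absolute projectivity. Once these are noted, the proof is a short combination of a Mackey argument with a single external citation.
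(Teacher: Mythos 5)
Your proposal is correct and follows essentially the same route as the paper's proof: establish two-sided projectivity of the terms of $C$ via the Mackey formula applied to a twisted diagonal vertex (the intersection with $G\times 1$ or $1\times H$ being trivial), then cite Linckelmann \cite[4.14.16]{Li181} to upgrade the one-sided homotopy equivalence to a two-sided one. The only difference is expository detail; the mathematical content is identical.
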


	Boltje and Xu proved that $p$-permutation equivalences lie between isotypies and splendid equivalences in the following way. Given a bounded chain complex $C$ of $p$-permutation modules, the \textit{Lefschetz invariant} of $C$ is 
    \[
    \Lambda(C) := \sum_{n\in \Z} (-1)^n[C_n] \in T(RG).
    \]
	
	\begin{theorem}{\cite[Theorem 1.5]{BoXu07}}
		Let $C$ be a splendid Rickard equivalence between $A$ and $B$. Then $\gamma := \Lambda(C)$ is a $p$-permutation equivalence between $RGe$ and $RHf$.
	\end{theorem}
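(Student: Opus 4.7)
The plan is to apply the Lefschetz invariant $\Lambda$ to the two chain homotopy equivalences in Definition~\ref{def:splendid rickard} and verify each of the three conditions defining a $p$-permutation equivalence, leveraging three standard properties of $\Lambda$ on bounded complexes of $p$-permutation bimodules.

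First, membership of $\gamma$ in $T^\Delta(A,B)$ is immediate from condition (a) of Definition~\ref{def:splendid rickard}: every indecomposable summand of each $C_n$ has a twisted diagonal vertex, so each $[C_n]$ lies in $T^\Delta(A,B)$, and hence so does the alternating sum $\gamma = \sum_n (-1)^n [C_n]$.

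Next, I would establish three standard properties of $\Lambda$. For multiplicativity, given bounded complexes $C$ of $(A,B)$-bimodules and $D$ of $(B,A')$-bimodules, the total tensor product satisfies $(C \otimes_B D)_n = \bigoplus_{i+j=n} C_i \otimes_B D_j$; distributing $(-1)^n = (-1)^i(-1)^j$ yields $\Lambda(C \otimes_B D) = \Lambda(C) \cdot_H \Lambda(D)$. For duality, $(C^*)_n = (C_{-n})^*$ gives $\Lambda(C^*) = \Lambda(C)^*$ after re-indexing. For homotopy invariance, if $C \simeq D$ in $K^b$, then $C \oplus P \cong D \oplus Q$ in $\Ch^b$ for some contractible $P, Q$, and a bounded contractible complex has zero Lefschetz invariant because it splits into short split exact sequences whose three classes cancel in the alternating sum; hence $\Lambda(C) = \Lambda(D)$.

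Combining these tools, applying $\Lambda$ to both equivalences $C \otimes_B C^* \simeq A$ and $C^* \otimes_A C \simeq B$ gives
\[
\gamma \cdot_H \gamma^* \;=\; \Lambda(C \otimes_B C^*) \;=\; \Lambda(A) \;=\; [A] \qquad \text{and} \qquad \gamma^* \cdot_G \gamma \;=\; [B],
\]
which are the remaining defining relations for $\gamma$ to be a $p$-permutation equivalence between $RGe$ and $RHf$. The only piece requiring real care is the homotopy-invariance claim, which reduces to the vanishing of $\Lambda$ on bounded contractible complexes; everything else is sign bookkeeping.
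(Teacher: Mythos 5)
The paper cites this result from \cite[Theorem 1.5]{BoXu07} and does not reproduce a proof, so there is no internal argument to compare against. Your proof is correct and is the standard one: membership $\gamma \in T^\Delta(A,B)$ is immediate from condition (a) of Definition~\ref{def:splendid rickard}, and the three properties of $\Lambda$ you invoke---multiplicativity over the total tensor product, compatibility with bimodule duality, and homotopy invariance---turn the two equivalences $C\otimes_B C^* \simeq A$ and $C^*\otimes_A C \simeq B$ into the two defining identities $\gamma\cdot_H\gamma^* = [A]$ and $\gamma^*\cdot_G\gamma = [B]$. The only point deserving a remark is the homotopy-invariance step: your reduction, via the Krull--Schmidt decomposition of a bounded complex in an additive Krull--Schmidt category into a minimal summand plus a contractible one (so that $C\simeq D$ in $K^b$ gives $C\oplus P \cong D\oplus Q$ with $P,Q$ contractible), followed by the splitting of a bounded contractible complex into shifted two-term pieces $0\to M\xrightarrow{\ \id\ } M\to 0$, is exactly the right mechanism and is applicable here since the terms are finitely generated over $k$ or the complete DVR $\calO$.
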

	
	\begin{remark}
        Under Rickard's original definition, if $C$ is a splendid Rickard equivalence for $A$ and $B$, for any $Q \leq P$ and blocks $e_Q$ and $f_Q$ corresponding to the blocks of $kC_G(Q)$ and $kC_H(Q)$ determined by $A$ and $B$, the chain complex $e_Q C(\Delta Q) f_Q$ is a splendid Rickard equivalence for $kC_G(Q)e_Q$ and $kC_H(Q)f_Q$. Such equivalences are regarded as ``local'' equivalences, equivalences between blocks of local centralizer subgroups. 
	\end{remark}

	\section{Extended tensor products and tensor products of $p$-permutation bimodules}
	
	In this section, we explain some of the tensor products that will arise in the sequel. Throughout, $G,H,K$ are finite groups, and $R$ denotes a commutative ring. 
	
	\begin{definition}
		
		Let $X \leq G\times H$ and $Y \leq H \times K$. Furthermore, let $M \in {}_{RX}\catmod$ and $N \in {}_{RY}\catmod$. Since $k_1(X) \times k_2(X) \leq X$, $M$ can be viewed as $(R[k_1(X)], R[k_2(X)])$-bimodule. Similarly, $N$ can be viewed as $(R[k_1(Y)], R[k_2(Y)])$-bimodule, so $M\otimes_{R[k_1(X) \cap k_2(Y)]}N$ is a $(R[k_1(X)], R[k_2(Y)])$-bimodule. Note that $k_1(X) \times k_2(Y) \leq X\ast Y$, and that this bimodule structure can be extended to a $R[X\ast Y]$-module structure such that for $(g,k) \in X\ast Y,$ $m \in M$, $n \in N$, we have \[(g,k)\cdot (m\otimes n) = (g,h)m \otimes (h,k)n, \] where $h \in H$ is chosen such that $(g,h) \in X$ and $(h, k)\in Y$. Note this is independent of choice of $h$. This construction was first used in \cite{Bo10b}. 
        We denote this \textit{extended tensor product} by $M \underset{RH}{\overset{X,Y}{\otimes}} N \in {}_{R[X\ast Y]}\catmod$, and obtain a bifunctor $- \underset{RH}{\overset{X,Y}{\otimes}} -: {}_{RX}\catmod \times {}_{RY}\catmod \to {}_{R[X\ast Y]}\catmod$. 
		
		Some quick calculations determine that this construction is associative in the following way: if $L$ is a finite group, $Z \leq K \times L$, and $P \in {}_{RZ}\catmod$, then \[(M \underset{RH}{\overset{X,Y}{\otimes}} N) \underset{RK}{\overset{X\ast Y,Z}{\otimes}}P \xrightarrow{\sim} M \underset{RH}{\overset{X,Y\ast Z}{\otimes}} (N \underset{RK}{\overset{Y,Z}{\otimes}}P ), \quad (m\otimes n)\otimes p \mapsto m \otimes(n\otimes p).\] 
  defines a natural isomorphism of $R[X\ast Y \ast Z]$-modules which is natural in $M,N$ and $K$.
		
		Moreover, for $g\in G$, $h \in H$, $k\in K$, we have a natural isomorphism \[ {}^{(g,k)} (M \underset{RH}{\overset{X,Y}{\otimes}} N) \cong {}^{(g,h)}M \underset{RH}{\overset{{}^{(g,h)}X,{}^{(h,k)}Y}{\otimes}} {}^{(h,k)}N \] of $R[{}^{(g,k)} (X\ast Y)]$-modules.

	\end{definition}

	The following theorem of Serge Bouc describes an interaction between additive induction, standard tensor products, and extended tensor products. 
	
	\begin{theorem}{\cite{Bo10b}}\label{bouciso}
		Let $X \leq G\times H$, $Y\leq H\times K$, $M \in {}_{RX}\mathbf{mod}$, $N \in {}_{RY}\mathbf{mod}$. There is a natural isomorphism
  \[\Ind^{G\times H}_X(M) \otimes_{RH} \Ind^{H\times K}_Y(N) \cong \bigoplus_{t \in [p_2(X) \backslash H / p_1(Y)]} \Ind^{G\times K}_{X \ast {}^{(t,1)}Y} (M \mathop{\otimes}\limits_{RH}^{X, {}^{(t,1)}Y} {}^{(t,1)}N)\] of $(RG, RK)$-bimodules.
	\end{theorem}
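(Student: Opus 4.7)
The plan is to construct explicit mutually inverse natural maps summand by summand. For each double coset representative $t \in [p_2(X) \backslash H / p_1(Y)]$, define a candidate map $\Phi_t$ from the $t$-summand on the right to the left side by
\[
(g, k) \otimes (m \otimes n) \longmapsto \bigl((g, 1) \otimes m\bigr) \otimes_{RH} \bigl((t, k) \otimes n\bigr)
\]
on generators, and assemble these into $\Phi = \bigoplus_t \Phi_t$.

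The heart of the argument is verifying that $\Phi_t$ respects the $R[X \ast {}^{(t,1)}Y]$-action on its source. For $(z_1, z_2) \in X \ast {}^{(t,1)}Y$ with intermediate element $h_0 \in p_2(X) \cap t p_1(Y) t^{-1}$, one unpacks
\[
(z_1, z_2)^{-1}(m \otimes n) = (z_1^{-1}, h_0^{-1}) m \otimes (t^{-1} h_0^{-1} t, z_2^{-1}) n,
\]
using that ${}^{(t,1)}Y$ acts on ${}^{(t,1)}N$ through the conjugation isomorphism with $Y$. Substituting and applying the defining relations of $\Ind^{G\times H}_X(M)$ and $\Ind^{H\times K}_Y(N)$ reduces the image under $\Phi_t$ to $\bigl((g, h_0^{-1}) \otimes m\bigr) \otimes_{RH} \bigl((h_0^{-1} t, k) \otimes n\bigr)$. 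Moving $h_0$ across the central $\otimes_{RH}$ from the right of the first factor to the left of the second precisely cancels the stray $h_0^{\pm 1}$ on both sides, returning $\bigl((g, 1) \otimes m\bigr) \otimes \bigl((t, k) \otimes n\bigr)$. Bimodule equivariance for the outer $RG$- and $RK$-actions is immediate, since these actions touch only the coordinates $g$ and $k$.

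To see that $\Phi$ is an isomorphism I would construct an explicit inverse. Every element of the target admits a normal form $\bigl((g, 1) \otimes m\bigr) \otimes \bigl((h, k) \otimes n\bigr)$ obtained by absorbing the second coordinate of the first factor into the first coordinate of the second via the bimodule relation. Given such a normal form, classify it by the unique double coset $p_2(X) t p_1(Y)$ containing $h$, write $h = x_2 t y_1$, and then use the $X$- and $Y$-relations to bring $h$ down to the representative $t$ at the cost of modifying $(g, m, k, n)$ in a prescribed way; the resulting datum lies in the $t$-summand on the source side. Mutual inversion and bimodule equivariance follow from the explicit formulas.

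The main obstacle I anticipate is careful tracking of conventions. For the key $h_0$-cancellation to go through, the right $RH$-action on $\Ind^{G\times H}_X(M)$ must be taken as $m \cdot h := (1, h^{-1}) \cdot m$ (so that the underlying left $R[G\times H]$-module descends correctly to an $(RG, RH)$-bimodule), and the conjugation twist defining ${}^{(t,1)}N$ must match the one used to define $X \ast {}^{(t,1)}Y$. A single sign or inversion error in either translation will make the critical transfer across $\otimes_{RH}$ appear to fail, but once the conventions are pinned down consistently the cancellation unwinds automatically.
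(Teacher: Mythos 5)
The paper gives no proof here---it simply cites Bouc's result---so there is no in-paper argument to compare against. Your proposed proof, by direct construction of mutually inverse $R[G\times K]$-linear maps, is the standard and correct route, and it is essentially what Bouc does. I verified the key well-definedness check for $\Phi_t$: writing $(z_1,z_2)\in X\ast{}^{(t,1)}Y$ with connector $h_0$, one obtains the term $\bigl((gz_1,h_0)\otimes m\bigr)\otimes_{RH}\bigl((h_0t,kz_2)\otimes n\bigr)$, and transporting $h_0$ across $\otimes_{RH}$ (using the right $RH$-action on $\Ind^{G\times H}_X(M)$ given by $(1,h^{-1})$ and the left $RH$-action on $\Ind^{H\times K}_Y(N)$ given by $(h,1)$) collapses this to $\bigl((gz_1,1)\otimes m\bigr)\otimes_{RH}\bigl((t,kz_2)\otimes n\bigr)$, exactly as required.

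The one piece you have not actually carried out, and which is the genuinely fiddly part, is well-definedness of the proposed inverse. The normal form $\bigl((g,1)\otimes m\bigr)\otimes_{RH}\bigl((h,k)\otimes n\bigr)$ is far from unique: applying a relation $(g,1)\otimes m = (gx_1^{-1}, x_2^{-1})\otimes x m$ for $x=(x_1,x_2)\in X$ and renormalizing shifts $h$ to $x_2h$, while a relation in the second factor shifts $h$ to $hy_1^{-1}$ for $y_1\in p_1(Y)$. You must check that after rewriting $h=x_2 t y_1$ and pushing the ``debris'' into $(g,m)$ and $(k,n)$, the resulting element of $\Ind^{G\times K}_{X\ast{}^{(t,1)}Y}\bigl(M\mathop{\otimes}_{RH}^{X,{}^{(t,1)}Y}{}^{(t,1)}N\bigr)$ is independent of the choices of $x_2\in p_2(X)$, $y_1\in p_1(Y)$, and of the normal-form representative itself; this is precisely where the extended-tensor-product relations in $M\mathop{\otimes}_{RH}^{X,{}^{(t,1)}Y}{}^{(t,1)}N$ are used, and it should be written out rather than asserted. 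Once that is done the proof is complete; I see no gap in the overall strategy.
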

	
	We next turn our attention to extended tensor products of $p$-permutation modules in conjunction with the Brauer construction. In this case, the tensor products are particularly well-behaved.
	
	\begin{lemma}{\cite[Lemma 7.2]{BoPe20}}
		Suppose $(K, \calO, k)$ is a $p$-modular system. Let $X \leq G\times H$ and $Y \leq H \times K$. 
		\begin{enumerate}
			\item If $M \in {}_{\calO X}\triv$ and $N \in {}_{\calO Y}\triv$, then $M \overset{X,Y}{\underset{\calO H}{\otimes}} N \in {}_{\calO [X\ast Y]}\triv$. 
			\item If $M \in {}_{\calO X}\triv$ and $N \in {}_{\calO Y}\triv$ are indecomposable with twisted diagonal vertices, then each indecomposable direct summand of the $\calO [X\ast Y]$-module $M \overset{X,Y}{\underset{\calO H}{\otimes}} N $ has twisted diagonal vertices. 
		\end{enumerate}
	\end{lemma}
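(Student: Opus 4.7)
The plan is to deduce both parts from the corresponding statements about the ordinary tensor product $M' \otimes_{\calO H} N'$ of the induced modules $M' := \Ind^{G\times H}_X M$ and $N' := \Ind^{H\times K}_Y N$, using Bouc's isomorphism (Theorem~\ref{bouciso}) as the bridge between the extended and ordinary tensor products.

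First, observe that induction preserves both the trivial source property and (in the setting of (b)) twisted diagonality of vertices, since vertices of indecomposable summands of $\Ind^{G\times H}_X M$ are $(G\times H)$-conjugate to subgroups of vertices of $M$, and twisted diagonal subgroups of $G\times H$ are closed under conjugation and subgroup formation. Next, I would invoke the fact that the ordinary tensor product over $\calO H$ of $p$-permutation bimodules is again a $p$-permutation bimodule, and that it preserves the property of having only twisted diagonal vertices on indecomposable summands. The latter reduces, by bilinearity and closure of trivial source modules under direct summands, to the case of transitive permutation bimodules $\calO[(G\times H)/V]$ and $\calO[(H\times K)/W]$ with $V$ and $W$ twisted diagonal; for these, a direct Mackey-type decomposition yields a permutation $\calO[G\times K]$-module whose point stabilizers are of the form $V \ast {}^{(h,1)}W$ for $h$ running over $p_2(V)\backslash H / p_1(W)$, each twisted diagonal by the closure properties of Section~2.

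With this in hand, I apply Theorem~\ref{bouciso}:
\[
M' \otimes_{\calO H} N' \;\cong\; \bigoplus_{t \in [p_2(X) \backslash H / p_1(Y)]} \Ind^{G\times K}_{X \ast {}^{(t,1)}Y}\!\bigl( M \overset{X, {}^{(t,1)}Y}{\underset{\calO H}{\otimes}} {}^{(t,1)}N \bigr).
\]
By the preceding step, the left-hand side is a $p$-permutation $\calO[G\times K]$-module with only twisted diagonal vertices on indecomposable summands (in case (b)), and therefore every right-hand summand inherits both properties. Specializing to $t=1$ yields that the module $\Ind^{G\times K}_{X \ast Y}\bigl(M \overset{X,Y}{\underset{\calO H}{\otimes}} N\bigr)$ has the claimed properties. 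To descend to the extended tensor product itself, I use that any $\calO[X\ast Y]$-module $L$ is a direct summand of $\Res^{G\times K}_{X\ast Y} \Ind^{G\times K}_{X\ast Y}(L)$, realized as the identity-double-coset term of the Mackey decomposition. Since restriction preserves trivial sources and sends modules with twisted diagonal vertices to modules whose indecomposable summands have twisted diagonal vertices (subgroups of conjugates of twisted diagonal subgroups remain twisted diagonal), both (a) and (b) follow.

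The main obstacle is the fact invoked in the second paragraph, namely that $-\otimes_{\calO H} -$ preserves $p$-permutation bimodules together with twisted diagonality of vertices. A direct proof amounts to tracking permutation bases through a Mackey-type computation and verifying that the resulting point stabilizers of $G\times K$ are precisely of the form $V \ast {}^{(h,1)}W$. The combinatorics of Section~2 make this identification transparent, but this bookkeeping is the non-trivial step; once in place, Bouc's isomorphism propagates the conclusion to the extended tensor product essentially formally.
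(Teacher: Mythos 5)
Your proof is correct, and it is a legitimate alternative to the direct approach taken in the source \cite{BoPe20}. The original proof of \cite[Lemma 7.2]{BoPe20} works directly with the extended tensor product: it reduces to the case where $M$ and $N$ are transitive permutation modules $\calO[X/V]$ and $\calO[Y/W]$ (with $V,W$ the relevant $p$-subgroups), computes $\calO[X/V]\overset{X,Y}{\underset{\calO H}{\otimes}}\calO[Y/W]$ explicitly as a permutation $\calO[X\ast Y]$-module, and checks that the point stabilizers are again twisted diagonal when $V$ and $W$ are. Your argument instead reduces the extended tensor product statement to the analogous fact for the ordinary tensor product $\Ind^{G\times H}_X M\otimes_{\calO H}\Ind^{H\times K}_Y N$ and then transports it across Theorem~\ref{bouciso}, descending at the end by the identity double coset of Mackey. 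This is sound: induction, the passage across Bouc's isomorphism, and restriction all preserve the $p$-permutation property, and in case (b) each of these steps replaces vertices by conjugates of subgroups, under which twisted diagonality is stable.

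Two points worth flagging. First, the ``key fact'' you single out as the main obstacle --- that $-\otimes_{\calO H}-$ of $p$-permutation bimodules with twisted diagonal vertices again has twisted diagonal vertices --- is itself a special case of Theorem~\ref{bouciso}: taking $M=N=\calO$ with $X=V$, $Y=W$ gives
\[
\calO[(G\times H)/V]\otimes_{\calO H}\calO[(H\times K)/W]\cong\bigoplus_{t}\calO\bigl[(G\times K)/(V\ast{}^{(t,1)}W)\bigr],
\]
since the extended tensor product of trivial modules is trivial, and $V\ast{}^{(t,1)}W$ is a twisted diagonal $p$-group whenever $V$ and $W$ are. So the ``bookkeeping'' you defer is already encoded in the same theorem you invoke, and there is no circularity. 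Second, your route is conceptually pleasant but marginally longer than the direct computation, since you go out to $G\times H$, $H\times K$, $G\times K$ via induction and come back via restriction; the cost is small, but the original argument settles the matter entirely inside $X$, $Y$, $X\ast Y$.
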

	
	\begin{remark}
		This construction originally appeared in \cite{BoDa12}. For a fixed twisted diagonal $p$-subgroup $\Delta(P, \sigma, R)\leq G\times K$, denote by $\Gamma_H(P,\sigma, R)$ the set of triples $(\phi, Q,\psi)$ where $Q \leq H$ and $\psi: R\xrightarrow{\sim} Q$ and $\phi: Q\xrightarrow{\sim}P$ are isomorphisms with $\phi\circ \psi = \sigma$. The group $H$ acts on $\Gamma_H(P,\sigma, R)$ by \[{}^h(\phi, Q,\psi) := (\phi c_h\inv , {}^h Q, c_h \psi).\] Note that $\stab_H(\phi, Q,\psi) = C_H(Q)$. For $M \in {}_{\calO G}\catmod_{\calO H}$, $N \in {}_{\calO H}\catmod_{\calO K}$, and $(\phi, Q,\psi) \in \Gamma_H(P,\sigma, R)$, one has a $(k[C_G(P)], k[C_K(R)])$-bimodule homomorphism 
		\begin{align*}
		\Phi_{(\phi, Q,\psi)}: M(\Delta(P,\phi, Q)) \otimes_{k[C_H(Q)]} N(\Delta(Q,\psi, R)) &\to (M\otimes_{\calO H} N)(\Delta(P,\sigma, R))\\
		\Br_{\Delta(P,\phi, Q)}(m)\otimes \Br_{\Delta(Q,\psi, R)}(n) &\mapsto \Br_{\Delta(P,\sigma, R)}(m\otimes n),
		\end{align*}
		(see \cite[3.1]{BoDa12}) which is natural in $M$ and $N$. The following theorem characterizes the tensor product on the right-hand side for a particular class of $p$-permutation modules.
	\end{remark}
	
	\begin{theorem}{\cite[Theorem 3.3]{BoDa12}}
		Let $\Delta(P, \sigma, R) \in s_p(G\times K)$ be a twisted diagonal $p$-subgroup, let $\Gamma = \Gamma_H(P,\sigma ,R)$ be as above, and let $\widetilde\Gamma \subseteq \Gamma$ be a set of representatives of the $H$-orbits of $\Gamma$. Furthermore, let $M \in {}_{\calO G}\triv_{\calO H}$ and $N \in {}_{\calO H}\triv_{\calO K}$ be $p$-permutation bimodules whose indecomposable direct summands have twisted diagonal vertices. Then the direct sum of the homomorphisms $\Phi_{(\phi, Q,\psi)}$ for $(\phi,Q,\psi)\in \widetilde{\Gamma}$ yields an isomorphism \[\Phi: \bigoplus_{(\phi,Q,\psi)\in \widetilde{\Gamma}} M(\Delta(P,\phi, Q)) \otimes_{k[C_H(Q)]} N(\Delta(Q,\psi, R)) \to (M\otimes_{\calO H} N)(\Delta(P,\sigma, R)) \] of $(k[C_G(P)],k[C_K(R)])$-bimodules which is natural in $M$ and $N$.
	\end{theorem}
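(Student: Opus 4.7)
The plan is to establish the isomorphism $\Phi$ by reducing to transitive permutation bimodules on twisted diagonal subgroups, and then matching $k$-bases via an explicit combinatorial bijection.

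First, I would note that both the source and the target of $\Phi$ are additive bifunctors in $(M,N)$. Using the Krull--Schmidt theorem together with the hypothesis that every indecomposable summand of $M$ and $N$ has a twisted diagonal vertex, and recalling that such an indecomposable is a direct summand of some transitive permutation bimodule of the form $\calO[(G\times H)/\Delta(A,\alpha,B)]$, naturality in $M$ and $N$ together with the compatibility of $\Phi_{(\phi,Q,\psi)}$ with direct sums reduces the claim to the case where $M = \calO[(G\times H)/U]$ and $N = \calO[(H\times K)/V]$ for twisted diagonal subgroups $U \leq G\times H$ and $V\leq H\times K$.

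In this case, Bouc's theorem (Theorem~\ref{bouciso}) provides a decomposition
\[
M \otimes_{\calO H} N \;\cong\; \bigoplus_{t \in [p_2(U)\backslash H / p_1(V)]} \Ind_{U \ast {}^{(t,1)}V}^{G\times K}(\calO),
\]
and each composition $U \ast {}^{(t,1)}V$ is again a twisted diagonal subgroup. Proposition~\ref{brauerppermproperties}(ii) then computes the Brauer construction at $\Delta(P,\sigma,R)$ of each summand as the $k$-span of the $\Delta(P,\sigma,R)$-fixed cosets in $(G\times K)/(U \ast {}^{(t,1)}V)$. On the other hand, the same proposition computes $M(\Delta(P,\phi,Q))$ and $N(\Delta(Q,\psi,R))$ as spans of fixed cosets, and the tensor product over $k[C_H(Q)]$ identifies such pairs modulo the diagonal $C_H(Q)$-action. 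It remains to identify these two $k$-bases in a manner compatible with the maps $\Phi_{(\phi,Q,\psi)}$, whose definition sends $\Br_{\Delta(P,\phi,Q)}(m)\otimes \Br_{\Delta(Q,\psi,R)}(n)$ to $\Br_{\Delta(P,\sigma,R)}(m\otimes n)$.

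The principal obstacle, and the heart of the argument, is the construction of this bijection. Unwinding the definition of the composition $U \ast {}^{(t,1)}V$, a fixed coset produces an intermediate element $h\in H$ which in turn encodes a factorization $\sigma = \phi\circ\psi$ with $\psi\colon R \xrightarrow{\sim} Q$ and $\phi\colon Q \xrightarrow{\sim} P$, that is, an element of $\Gamma_H(P,\sigma,R)$; the ambiguity of the choice of $h$ is precisely an $H$-translation, giving the parametrization by $\widetilde\Gamma$. A stabilizer computation via Lemma~\ref{prop:proj&kernelsforspecialsubgroups} identifies the stabilizer of a triple $(\phi,Q,\psi)$ with $C_H(Q)$, and this stabilizer action corresponds exactly to the identifications taken when forming the tensor product over $k[C_H(Q)]$. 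The double coset index $t$ from Bouc's formula corresponds bijectively to the $H$-orbit of $(\phi,Q,\psi)$. Naturality of $\Phi$ in $M$ and $N$ is automatic from the naturality of the Brauer map underlying the definition of each $\Phi_{(\phi,Q,\psi)}$. Once the bijection of bases is in place and the $C_H(Q)$-action is matched with the tensor product identifications, the map $\Phi$ is simultaneously surjective and injective, establishing the isomorphism.
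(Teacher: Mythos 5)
This theorem is cited from Boltje--Danz and the paper gives no proof of its own, so there is nothing internal to compare against; I am evaluating your argument on its own terms. Your overall strategy is the right one and almost certainly matches the original: by additivity and naturality of both sides in $(M,N)$, together with Krull--Schmidt over $\calO$ and the fact that a trivial source module with twisted diagonal vertex $\Delta(A,\alpha,B)$ is a summand of $\calO[(G\times H)/\Delta(A,\alpha,B)]$, one reduces to transitive permutation bimodules on twisted diagonal subgroups; then Theorem~\ref{bouciso} decomposes $M\otimes_{\calO H}N$ into induced trivial modules from the compositions $U\ast{}^{(t,1)}V$, which are again twisted diagonal, and Proposition~\ref{brauerppermproperties}(b) identifies all Brauer constructions with spans of fixed cosets.

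The gap is precisely where you locate it: the combinatorial matching is asserted, not proved, and as written it elides real work. First, the statement that ``the double coset index $t$ corresponds bijectively to the $H$-orbit of $(\phi,Q,\psi)$'' is not immediate. For a given $t$ the summand $\Ind_{U\ast{}^{(t,1)}V}^{G\times K}(\calO)(\Delta(P,\sigma,R))$ may vanish, several fixed cosets within one summand may give rise to different triples, and one must show that the intermediate element $h$ extracted from a fixed coset is well-defined up to $C_H(Q)$ rather than some larger ambiguity. Second, for the left-hand side to have the basis you describe you implicitly need the right $C_H(Q)$-action on the fixed-coset basis of $M(\Delta(P,\phi,Q))$ (and the left action on that of $N(\Delta(Q,\psi,R))$) to be free, so that $-\otimes_{k[C_H(Q)]}-$ really produces a basis of orbit representatives; this does hold because $k_2(U)=1$ and $k_1(V)=1$ force the action to be fixed-point-free, but it should be said. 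Third, and most importantly, even granting a bijection of indexing sets you must check that the specific map $\Phi_{(\phi,Q,\psi)}$ (which is $\Br_{\Delta(P,\phi,Q)}(m)\otimes\Br_{\Delta(Q,\psi,R)}(n)\mapsto\Br_{\Delta(P,\sigma,R)}(m\otimes n)$, not an abstractly chosen iso) sends each basis element of the source to the corresponding basis element of the target under Bouc's isomorphism, with no collapsing; otherwise surjectivity and injectivity of $\Phi$ do not follow from a mere equality of ranks. So the plan is sound, but the ``heart of the argument'' you identify is still an open step in what you have written.
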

	
	Note that not only $C_G(P)\times C_K(R)$, but also the bigger group $N_{G\times K}(\Delta(P,\sigma ,R))$ acts on the right-hand side of the isomorphism above. Therefore, we may consider the extended tensor product structure on the right-hand side and attempt to transport it over to the left. This was done by Boltje and Perepelitsky in the following way. First, note the domain of the homomorphism $\Phi_{(\phi, Q, \psi)}$ carries a $k[N_{G\times H}(\Delta(P,\phi, Q))\ast N_{H\times K}(\Delta(Q,\psi, R))]$-module structure via the extended tensor product construction. This extends the $k[C_G(P) \times C_K(R)]$-module structure from before, since $k_1(N_{G\times H}(\Delta(P,\phi, Q))) = C_G(P)$ and $k_2(N_{G\times H}(\Delta(P,\phi, Q))) = C_H(Q) = k_1(N_{H\times K}(\Delta(Q,\psi, R)))$, and $k_2(N_{H\times K}(\Delta(Q,\psi, R)))= C_K(R)$. It follows that $\Phi_{(\phi, Q, \psi)}$ actually defines a homomorphism \[\Phi_{(\phi, Q,\psi)}: M(\Delta(P,\phi, Q)) \overset{N_{G\times H}(\Delta(P,\phi, Q)), N_{H\times K}(\Delta(Q,\psi, R))}{\underset{kH}{\otimes}} N(\Delta(Q,\psi, R)) \to (M\otimes_{\calO H} N )(\Delta(P,\sigma, R)) \] of $k[N_{G\times H}(\Delta(P,\phi, Q))\ast N_{H\times K}(\Delta(Q,\psi, R))]$-bimodules which is natural in $M$ and $N$. 
	
	\begin{corollary}{\cite[Corollary 7.4]{BoPe20}}
		Let $M \in {}_{\calO G}\triv_{\calO H}, N\in {}_{\calO H}\triv_{\calO K}$, $\Delta (P,\sigma, R)\in s_p(G\times K)$, and $\widetilde\Gamma \subseteq \Gamma = \Gamma_H(P,\sigma, R)$ be as before. 
		\begin{enumerate}
			\item The group $N_{G\times K}(\Delta(P,\sigma,R))\times H$ acts on $\Gamma$ via \[{}^{((g,k),h)}(\phi, Q,\psi) := (c_g\phi c_h\inv, {}^hQ, c_h\psi c_k\inv).\] For the induced action of $N_{G\times K}(\Delta(P,\sigma,R))$ on the $H$-orbits $[\phi, Q,\psi]_H$ of $\Gamma$, one has \[\stab_{N_{G\times K}(\Delta(P,\sigma,R))}([\phi, Q,\psi]_H) = N_{G\times H}(\Delta(P,\phi, Q))\ast N_{H\times K}(\Delta(Q,\psi, R))\] for each $(\phi,Q,\psi)\in \Gamma$. 
   
			\item Let $\widehat\Gamma \subseteq \widetilde\Gamma$ be a set of representatives of the $N_{G\times K}(\Delta(P,\sigma, R))\times H$-orbits of $\Gamma$. The homomorphisms $\Phi_{(\phi, Q, \psi)}$ for each $(\phi, Q,\psi) \in \widehat{\Gamma}$ induce an isomorphism 
			\[\Phi: \bigoplus_{\gamma = (\phi,Q,\psi) \in \widehat{\Gamma}} \Ind_{X(\gamma)\ast Y(\gamma)}^{N_{G\times K}(\Delta(P,\sigma, R))} \left( M(\Delta(P,\phi, Q)) \overset{X(\gamma),Y(\gamma)}{\underset{kH}{\otimes}} N(\Delta(Q,\psi, R)) \right) \xrightarrow{\sim} (M\otimes_{\calO H} N)(\Delta(P,\sigma, R))   \]
			of $k[N_{G\times K}(\Delta(P,\sigma, R))]$-modules which is natural in $M$ and $N$, with $X(\gamma) := N_{G\times H}(\Delta(P,\phi, Q))$ and $Y(\gamma) :=  N_{H\times K}(\Delta(Q,\psi, R))$ for any $\gamma = (\phi, Q,\psi) \in \Gamma$.
		\end{enumerate}
	\end{corollary}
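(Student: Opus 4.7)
The plan is to deduce part (b) from the preceding Boltje-Dade theorem by upgrading the $(k[C_G(P)], k[C_K(R)])$-bimodule isomorphism $\Phi$ (indexed by $H$-orbit representatives $\widetilde\Gamma$) to a $k[N]$-module isomorphism (indexed by $N\times H$-orbit representatives $\widehat\Gamma$), where $N := N_{G\times K}(\Delta(P,\sigma,R))$. Part (a) provides the combinatorial backbone of this upgrade, so I would handle it first by direct unpacking. Well-definedness of the action on $\Gamma$ follows because the composition $(c_g\phi c_h^{-1})\circ(c_h\psi c_k^{-1}) = c_g\sigma c_k^{-1}$ equals $\sigma$, since $(g,k)\in N$ forces $c_g\sigma = \sigma c_k$ by the defining condition for the normalizer of a twisted diagonal subgroup. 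For the stabilizer, $(g,k)\in N$ stabilizes the $H$-orbit $[(\phi,Q,\psi)]_H$ iff there exists $h\in H$ satisfying $c_g\phi c_h^{-1}=\phi$, ${}^hQ=Q$, and $c_h\psi c_k^{-1}=\psi$ simultaneously; these conditions amount to $(g,h)\in N_{G\times H}(\Delta(P,\phi,Q)) = X(\gamma)$ and $(h,k) \in N_{H\times K}(\Delta(Q,\psi,R)) = Y(\gamma)$, which by the definition of composition of subgroups is $(g,k)\in X(\gamma)\ast Y(\gamma)$.

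For part (b), I would first verify that the Boltje-Dade isomorphism $\Phi$ is in fact $N$-equivariant, with $N$ acting on the target by functoriality of the Brauer construction. Given $(g,k)\in N$ and $h\in H$ with $((g,k),h)\cdot(\phi,Q,\psi) = (\phi',Q',\psi')$, Proposition~\ref{brauercommuteswithdualsandconj}(b) furnishes conjugation isomorphisms ${}^{(g,h)}M(\Delta(P,\phi,Q))\cong M(\Delta(P,\phi',Q'))$ and ${}^{(h,k)}N(\Delta(Q,\psi,R))\cong N(\Delta(Q',\psi',R))$, and naturality of $\Phi_{(\phi,Q,\psi)}$ in both $M$ and $N$ translates this into the statement that $(g,k)$-conjugation on the target sends the summand indexed by $[(\phi,Q,\psi)]_H$ onto the summand indexed by $[(\phi',Q',\psi')]_H$, exactly as prescribed by the induced $N$-action on $H$-orbits.

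With $N$-equivariance established, I would group the $H$-orbit summands according to $N\times H$-orbits of $\Gamma$. Within the $N\times H$-orbit of a representative $\gamma\in\widehat\Gamma$, the $N$-action permutes the $H$-orbit summands transitively with stabilizer $X(\gamma)\ast Y(\gamma)$ by part (a). The stabilizer-equivariant structure on the summand $M(\Delta(P,\phi,Q))\otimes_{k[C_H(Q)]}N(\Delta(Q,\psi,R))$ at $\gamma$ inherited from the target extends the $(k[C_G(P)],k[C_K(R)])$-bimodule structure, and since $k_1(X(\gamma)\ast Y(\gamma)) = C_G(P)$, $k_2(X(\gamma)\ast Y(\gamma)) = C_K(R)$, with $(g,h)\in X(\gamma)$, $(h,k)\in Y(\gamma)$ acting on a simple tensor via $(g,h)m\otimes (h,k)n$, this is precisely the extended tensor product structure. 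The standard identification of a transitively permuted family of summands with an induced module then gives the contribution of the $N\times H$-orbit of $\gamma$ as $\Ind_{X(\gamma)\ast Y(\gamma)}^{N}$ of the extended tensor product at $\gamma$, and summing over $\widehat\Gamma$ yields the claimed isomorphism, with naturality inherited from that of $\Phi$.

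The main obstacle is the $N$-equivariance check together with the identification of the stabilizer-equivariant structure on the summand indexed by $\gamma$ with the extended tensor product structure. One must carefully track how the conjugation isomorphisms on Brauer quotients interact with the naturality of $\Phi_{(\phi,Q,\psi)}$ to confirm this coincidence, including verifying that the resulting assignment is independent of the choice of $h\in H$ realizing the relation $((g,k),h)\cdot\gamma=\gamma'$.
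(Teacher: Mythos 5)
This statement is not proved in the paper: it is quoted directly from \cite[Corollary 7.4]{BoPe20}, so there is no internal proof to compare your attempt against. Evaluated on its own terms, your reconstruction is correct and is almost certainly the intended argument. Part (a) unwinds cleanly once one notes that $(g,k)\in N_{G\times K}(\Delta(P,\sigma,R))$ already forces ${}^gP=P$, ${}^kR=R$, and $c_g\sigma c_k^{-1}=\sigma$; the three conditions $c_g\phi c_h^{-1}=\phi$, ${}^hQ=Q$, $c_h\psi c_k^{-1}=\psi$ on a fixing element $h$ are then precisely the definitions of $(g,h)\in X(\gamma)$ and $(h,k)\in Y(\gamma)$, and the existence of such an $h$ is by definition $(g,k)\in X(\gamma)\ast Y(\gamma)$. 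Part (b) follows by the standard orbit-grouping argument; the essential technical input, which the paper itself records in the paragraph preceding the corollary, is that each $\Phi_{(\phi,Q,\psi)}$ is already a homomorphism of $k[X(\gamma)\ast Y(\gamma)]$-modules once the domain carries the extended tensor product structure, this being the refinement of the $(k[C_G(P)],k[C_K(R)])$-bimodule structure afforded by $k_1(X(\gamma)\ast Y(\gamma))=C_G(P)$ and $k_2(X(\gamma)\ast Y(\gamma))=C_K(R)$. You identify this as the crux in your last paragraph, which is right. The one phrasing I would tighten is the appeal to ``$\Phi$ being $N$-equivariant'': the domain is not a priori an $N$-module, so this only makes literal sense after the extended structure is imposed on each summand and the summands within one $N\times H$-orbit are assembled into an induced module. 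It is cleaner to say that $\Phi$ transports the $N$-action on $(M\otimes_{\calO H}N)(\Delta(P,\sigma,R))$ to the left-hand side, and then to verify via naturality of $\Phi_{(\phi,Q,\psi)}$ together with Proposition~\ref{brauercommuteswithdualsandconj}(b) that this transported action coincides with the induced-module structure $\Ind_{X(\gamma)\ast Y(\gamma)}^{N_{G\times K}(\Delta(P,\sigma,R))}$ of the $\gamma$-summand; this is what your sketch does in effect, so the gap is one of exposition rather than substance.
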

	
	This theorem has a block-wise version as well. First, observe that for any twisted diagonal subgroup $\Delta(P,\phi, Q)$ of $G\times H$, one has $C_{G\times H}(\Delta(P,\phi, Q)) = C_G(P)\times C_H(Q)$, and that for Brauer pairs $(P,e), (Q,f)$ for $kG$ and $kH$ respectively, the pair $(\Delta(P, \phi, Q), e\otimes f^*)$ is a Brauer pair for $k[G\times H] = kG \otimes_k kH$.
	
	\begin{theorem}{\cite[Theorem 7.5]{BoPe20} }\label{7.5}
		Let $(P,e)\in \mathcal{BP}(kG)$ and $(R,d)\in \mathcal{BP}(kK)$. Suppose $\sigma: R\xrightarrow{\sim} P$ is an isomorphism and let $C_G(P)\leq S\leq N_G(P,e)$ and $C_K(R)\leq T\leq N_K(R,d)$ be intermediate subgroups. Further, let $\Omega:= \Omega_H((P,e), \sigma, (R,d))$ denote the set of triples $(\phi, (Q,f), \psi)$, where $(Q,f)\in\mathcal{BP}(kH)$ and $\psi: R\xrightarrow{\sim} Q$ and $\phi: Q\xrightarrow{\sim} P$ are isomorphisms such that $\sigma = \phi \circ \psi$. Finally, let $M \in {}_{\calO G}\triv_{\calO H}$ and $N \in {}_{\calO H}\triv_{\calO K}$ be $p$-permutation modules all of whose indecomposable direct summands have twisted diagonal vertices.
		\begin{enumerate}
			\item The group $N_{G\times K}(\Delta(P,\sigma,R))\times H$ acts on $\Omega$ via \[{}^{((g,k),h)}(\phi, (Q,f),\psi) = (c_g\phi c_h\inv, {}^h(Q,f), c_h\psi c_k\inv)\] and $\stab_H(\phi, (Q,f),\psi) = C_H(Q)$.
			
			\item Let $\widetilde{\Omega} \subseteq \Omega$ be a set of representatives of the $H$-orbits of $\Omega$. The restrictions $\Phi_{(\phi, (Q,f), \psi)}$ of $\Phi_{(\phi, Q, \psi)}$ to $eM(\Delta(P,\phi, Q))f\otimes_{k[C_H(Q)]} fN(\Delta(Q,\psi, R))d$ define an isomorphism
			\begin{align*}
			\Phi: \bigoplus_{(\phi, (Q,f),\psi)\in \widetilde{\Omega}} eM(\Delta(P,\phi, Q))f\otimes_{k[C_H(Q)]} fN(\Delta(Q,\psi, R))d &\cong e(M\otimes_{\calO H} N)(\Delta(P,\sigma, R))d\\
			e\mathrm{Br}_{\Delta(P,\phi,Q)}(m)f\otimes f\mathrm{Br}_{\Delta(Q,\psi,R)}(n)d &\mapsto e\mathrm{Br}_{\Delta(P,\sigma, R)}(m\otimes n)d
			\end{align*}
			of $(k[C_G(P)]e, k[C_K(R)]d)$-bimodules which is natural in $M$ and $N$.
			
			\item Let $\widehat{\Omega} \subseteq \widetilde{\Omega}\subseteq \Omega$ be a set of representatives of the $N_{S\times T}(\Delta(P,\sigma, R))\times H$ orbits of $\Omega.$ The homomorphisms $\Phi_{(\phi, (Q,f), \psi)}: eM(\Delta(P,\phi, Q))f\otimes_{k[C_H(Q)]} fN(\Delta(Q,\psi, R))d \to e(M\otimes_{\calO H} N)(\Delta(P,\sigma, R))d$ for $(\phi, (Q,f), \psi) \in \widehat\Omega$ induce an isomorphism
			
			\[\Phi: \bigoplus_{\omega\in \hat{\Omega}} \Ind^{N_{S\times T}(\Delta(P,\sigma, R))}_{X(\omega)\ast Y(\omega)} (M(\omega)\mathop{\otimes}_{kH}^{X(\omega),Y(\omega)} N(\omega)) \cong e(M\otimes_{\calO H} N)(\Delta(P,\sigma, R))d\] 
			
			of $N_{S\times T}(\Delta(P,\sigma, R))$-modules which is natural in $M$ and $N$. Here, for $\omega = (\phi, (Q,f), \psi) \in \widehat{\Omega}$, we set $X(\omega) := N_{S\times H}(\Delta(P,\phi, Q), e\otimes f^*), Y(\omega) := N_{H\times T} (\Delta(Q,\psi, R), f\otimes d^*), M(\omega) := eM(\Delta (P, \phi, Q))f,$ and $N(\omega) := fN(\Delta(Q,\psi, R))d$.
			
			\item Assume that $G = K$, $(P,e) = (R,d)$, $S = T$, $\sigma = \text{id}_P$. Let $\Lambda := \Lambda_H(P)$ be the set of pairs $(\phi, (Q,f))$ where $(Q,f)$ is a Brauer pair of $kH$ and $\phi: Q\to P$ is an isomorphism. The group $S \times H$ acts on $\Lambda$ via \[{}^{(g,h)}(\phi, (Q,f)) = (c_g\phi c_h\inv, {}^h(Q,f)),\] for $(g,h) \in S\times H$ and $(\phi, (Q,f)) \in \Lambda. $ Let $\hat{\Lambda} \subseteq \widetilde{\Lambda} \subseteq \Lambda$ be such that $\widetilde{\Lambda}$ (resp. $\hat{\Lambda}$) is a set of representatives of the $H$-orbits (resp. $S\times H$-orbits) of $\Lambda$. Then one has a natural isomorphism 
			
			\[e(M\otimes_{\calO H} N)(\Delta(P))e \cong \bigoplus_{(\phi, (Q,f))\in \widetilde{\Lambda}} eM(\Delta(P,\phi, Q))f \otimes_{k[C_H(Q)]} fN(\Delta(Q, \phi\inv, P))e\]
			
			of $(k[C_G(P)]e, k[C_G(P)]e)$-bimodules and a natural isomorphism 
			
			\[e(M\otimes_{\calO H} N)(\Delta(P))e \cong \bigoplus_{\lambda = (\phi, (Q,f))\in \widehat{\Lambda}} \Ind^{N_{S\times S}(\Delta(P))}_{\Delta(I(\lambda))(C_G(P)\times 1)} (M(\lambda) \mathop{\otimes}_{kH}^{X(\lambda),Y(\lambda)} N(\lambda))\] 
			
			of $k[N_{S\times S} (\Delta (P))](e\otimes e^*)$-modules. Here, for $\lambda = (\phi, (Q,f)) \in \Lambda$, we set $X(\lambda) := N_{S\times H}(\Delta(P,\phi, Q), e\otimes f^*)$, $Y(\lambda) := N_{H\times S}(\Delta(Q, \phi\inv, P), f\otimes e^*)$, $M(\lambda) := eM(\Delta(P,\phi, Q))f,$ $N(\lambda) := fN(\Delta(Q,\phi\inv, P))e$, and $I(\lambda) := N_{(S, \phi, N_H(Q,f))}$.
		\end{enumerate}
	\end{theorem}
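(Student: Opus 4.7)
The plan is to build the theorem in layers starting from the preceding Boltje-Danz natural isomorphism (Theorem 3.3 of \cite{BoDa12}), adding three successive refinements: block-cutting via Lemma~\ref{lem:braur construction for iM}, transport of the $N_{S\times T}(\Delta(P,\sigma,R))$-action along the resulting isomorphism, and the self-duality specialization $\sigma=\id_P$.

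Part (a) is a direct verification: the action is well-defined because $(g,k)\in N_{G\times K}(\Delta(P,\sigma,R))$ satisfies $c_g\sigma c_k\inv=\sigma$, so $(c_g\phi c_h\inv)\circ(c_h\psi c_k\inv)=\sigma$, preserving the composition condition. The $H$-stabilizer equals $C_H(Q)$: if $h$ fixes $(\phi,(Q,f),\psi)$ then $\phi c_h\inv=\phi$ and $c_h\psi=\psi$ force $h\in C_H(Q)$, and then $h$ automatically fixes $f\in Z(k[C_H(Q)])$ and normalizes $Q$. For part (b), take the Boltje-Danz isomorphism indexed by $\widetilde\Gamma$, insert $1_{k[C_H(Q)]}=\sum_f f$ between the two tensor factors to refine the index set from $\widetilde\Gamma$ to $\widetilde\Omega$, and finally cut by $e$ on the left and $d$ on the right using Lemma~\ref{lem:braur construction for iM} to pass the idempotents through each Brauer map, giving the asserted natural isomorphism.

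For part (c), both sides of (b) carry a $k[N_{S\times T}(\Delta(P,\sigma,R))]$-module structure extending the $k[C_G(P)\times C_K(R)]$-action. On the left, $N_{S\times T}(\Delta(P,\sigma,R))$ permutes the direct summands according to the action of (a), and by Lemma~\ref{prop:proj&kernelsforspecialsubgroups}(c) together with the definition of composition the stabilizer of $[\omega]_H$ equals $X(\omega)\ast Y(\omega)$. Grouping $H$-orbits lying in a common $N_{S\times T}(\Delta(P,\sigma,R))\times H$-orbit and invoking the standard isomorphism identifying a transitively permuted direct sum with an induced module yields the induction formula of (c); the extended tensor product structure on each stabilizer summand is precisely the one carried by the domain of $\Phi_{(\phi,(Q,f),\psi)}$. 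For part (d), specialize $\sigma=\id_P$: the equation $\phi\circ\psi=\id$ forces $\psi=\phi\inv$, so $\Omega\cong\Lambda$ by forgetting $\psi$, and the first decomposition is (b) specialized. For the second, identify $X(\lambda)\ast Y(\lambda)$ with $\Delta(I(\lambda))\cdot(C_G(P)\times\{1\})$ by noting that $(g,g')\in X(\lambda)\ast Y(\lambda)$ iff some $h\in N_H(Q,f)$ satisfies $c_g\phi c_h\inv=\phi$ and $c_h\phi\inv c_{g'}\inv=\phi\inv$; together these imply $g\in N_{(S,\phi,N_H(Q,f))}=I(\lambda)$ and $g\inv g'\in C_G(P)$; then apply (c).

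The main obstacle is verifying in part (c) that the isomorphism from (b) intertwines the \emph{full} $k[N_{S\times T}(\Delta(P,\sigma,R))]$-action, not merely the action of the smaller subgroup $k[C_G(P)\times C_K(R)]$. This amounts to tracing the defining formula $e\Br_{\Delta(P,\phi,Q)}(m)f\otimes f\Br_{\Delta(Q,\psi,R)}(n)d\mapsto e\Br_{\Delta(P,\sigma,R)}(m\otimes n)d$ through the extended tensor product action formula for elements $(g,k)\in N_{S\times T}(\Delta(P,\sigma,R))\setminus(C_G(P)\times C_K(R))$, using that whenever $(g,h)\in X(\omega)$ and $(h,k)\in Y(\omega)$ one obtains $(g,k)\in X(\omega)\ast Y(\omega)$ acting compatibly on both sides.
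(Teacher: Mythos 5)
The paper itself does not prove this statement; it is quoted directly from \cite[Theorem 7.5]{BoPe20}, and the surrounding text only sets up the notation (the Boltje--Danz isomorphism $\Phi_{(\phi,Q,\psi)}$ and the observation that its domain carries the extended-tensor-product module structure). There is thus no in-paper argument to compare your proposal against. Your outline does, however, reconstruct the skeleton of the proof in \cite{BoPe20} faithfully: start from \cite[Theorem 3.3]{BoDa12}, refine the index set $\widetilde\Gamma$ to $\widetilde\Omega$ by inserting the decomposition $1=\sum_f f$ of $1\in k[C_H(Q)]$ into central orthogonal idempotents, cut by $e$ and $d$ (which commute with $\Phi$ since it is a $(k[C_G(P)],k[C_K(R)])$-bimodule map and $e,d$ are central), identify the $N_{S\times T}(\Delta(P,\sigma,R))$-stabilizer of each $H$-orbit with $X(\omega)\ast Y(\omega)$ via Lemma~\ref{prop:proj\&kernelsforspecialsubgroups}, and repackage transitively permuted summands as induced modules; part (d) then follows by specializing $\sigma=\id_P$ and using $X(\lambda)\ast X(\lambda)^{\circ}=\Delta(p_1(X(\lambda)))(k_1(X(\lambda))\times 1)=\Delta(I(\lambda))(C_G(P)\times 1)$ from Lemma~\ref{lem:composing subgroups}(a).

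The one place where your write-up falls short of a complete argument is exactly the step you flag as ``the main obstacle'' in (c). It is correct that this reduces to chasing the formula $e\Br_{\Delta(P,\phi,Q)}(m)f\otimes f\Br_{\Delta(Q,\psi,R)}(n)d\mapsto e\Br_{\Delta(P,\sigma,R)}(m\otimes n)d$ through the extended-tensor-product action, and the compatibility does hold because for $(g,h)\in X(\omega)$, $(h,k)\in Y(\omega)$ one has $(g,k)\cdot(m\otimes n)=(g,h)m\otimes(h,k)n$ in $M\otimes_{\calO H}N$ while the Brauer maps are $N_{G\times H}(\Delta(P,\phi,Q))$- and $N_{H\times K}(\Delta(Q,\psi,R))$-equivariant. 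But you should actually carry out this check rather than merely assert it, and you should also verify explicitly that the stabilizer computation $\stab_{N_{S\times T}(\Delta(P,\sigma,R))}([\omega]_H)=X(\omega)\ast Y(\omega)$ uses the full Brauer-pair normalizers (i.e.\ that the conditions ${}^hf=f$ and ${}^ge=e$, ${}^kd=d$ are accounted for by the constraints $g\in S\leq N_G(P,e)$, $k\in T\leq N_K(R,d)$). As written, the proposal is a correct and well-organized outline, but part (c) needs the computation filled in before it would constitute a proof.
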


    Note that the previous theorems also hold over base ring $k$ replacing $\calO$. 
	
	\section{Brauer pairs for $p$-permutation modules and $p$-permutation equivalences}
	
	Let $A$ and $B$ be sums of blocks of $kG$ and $kH$, respectively. Boltje and Perepelitsky in \cite{BoPe20} introduced the notion of a Brauer pair of a $p$-permutation equivalence $\gamma$ (to be precise, they initially defined the notion for an element $\gamma \in T^\Delta(A,B)$ for which $\gamma \cdot_B \gamma^* = [A]$). 
 
    Because a $p$-permutation equivalence can be written as a difference of bimodules whose indecomposable direct summands have twisted diagonal vertices, applying the Brauer construction at a subgroup $X \leq G\times H$ to a $p$-permutation equivalence will only be nonzero when $X$ is twisted diagonal. However, this is not a sufficient criteria for the result to be nonzero, and this motivates the definitions given in this section. We review the definitions and results in \cite{BoPe20}.

	\begin{definition}{\cite[Definition 5.1]{BoPe20}}
		Let $M \in {}_{kG}\textbf{triv}$. We call a Brauer pair $(P,e)$ of $kG$ an $M$-Brauer pair if $M(P,e) := e\cdot M(P) \neq 0$, where $M$ has $k[N_G(P,e)]e$-module structure. We denote the set of Brauer pairs of $M$ by $\mathcal{BP}(M)$. 
	\end{definition}

	\begin{prop}{\cite[Proposition 5.3, 5.4]{BoPe20}}\label{prop:M-Brauer pair properties} Let $M,N \in {}_{kG}\mathbf{triv}$.
		\begin{enumerate}
			\item $\mathcal{BP}(M)$ is a $G$-stable ideal in the poset $\mathcal{BP}(kG)$, i.e. it is stable under $G$-conjugation and if $(Q,f) \leq (P,e)$ are Brauer pairs of $kG$ such that $(P,e)$ is an $M$-Brauer pair, then also $(Q,f)$ is an $M$-Brauer pair. 
			\item Assume $M$ is indecomposable. Then the maximal $M$-Brauer pairs are precisely the $M$-Brauer pairs $(P,e)$ where $P$ is a vertex of $M$. Moreover, any two maximal $M$-Brauer pairs are $G$-conjugate. 
			\item Let $M$ and $N$ be indecomposable $p$-permutation $kG$-modules, suppose that $(P,e) \in \mathcal{BP}(kG)$ is both a maximal $M$-Brauer pair and a maximal $N$-Brauer pair, and set $I := N_G(P,e)$. Then $M(P,e)$ and $N(P,e)$ are indecomposable $p$-permutation $kIe$-modules. Moreover, $M \cong N$ if and only if $M(P,e) \cong N(P,e)$ as $kIe$-modules. 
		\end{enumerate}
		
	\end{prop}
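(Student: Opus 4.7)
The plan is to prove the three parts in turn, with the bulk of the work in part (b).

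For part (a), I will deduce $G$-stability immediately from Proposition~\ref{brauercommuteswithdualsandconj}(b), whose natural isomorphism ${}^g(M(P))\cong M({}^gP)$ yields $eM(P)\neq 0$ if and only if ${}^ge\cdot M({}^gP)\neq 0$. For the ideal property, I will induct on the chain length in the definition of $\leq$, reducing to the case $(Q,f)\trianglelefteq(P,e)$, so that $Q\trianglelefteq P$, $P\leq N_G(Q,f)$, and $\br_P(f)e=e$. Then Proposition~\ref{brauerppermproperties}(d) gives $M(P)\cong M(Q)(P/Q)$, and Lemma~\ref{lem:braur construction for iM}---applicable since $P\leq N_G(Q,f)$ makes $f$ a $P$-fixed idempotent---yields $(fM(Q))(P/Q)\cong \br_P(f)\cdot M(P)$. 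Since $\br_P(f)e=e$ and $eM(P)\neq 0$, this last module contains $eM(P)$ and is nonzero; hence $fM(Q)\neq 0$.

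For part (b), the direction ``$P$ a vertex of $M$ implies $(P,e)$ is maximal'' is quick: any strictly larger $(Q,f)$ must have $Q\gneq P$ (the relation $(P,e)\trianglelefteq(P,e')$ forces $e=e'$), but then $Q$ lies in no vertex of the indecomposable $M$, so $M(Q)=0$ by Proposition~\ref{brauerppermproperties}(c). The converse is the technical core. Assuming $(P,e)$ is maximal, I will analyze $V:=eM(P)$ as a $p$-permutation $k[N_G(P,e)/P]$-module: if some indecomposable summand $V_0$ of $V$ had a nontrivial vertex $\overline{Q}=Q/P$ (with $P\lneq Q\leq N_G(P,e)$), then the iterated Brauer identification $V(\overline{Q})\cong \br_Q(e)M(Q)$ together with Lemma~\ref{lem:braur construction for iM} would produce a block idempotent $f$ of $kC_G(Q)$ with $fM(Q)\neq 0$ and $(P,e)\lneq(Q,f)$, contradicting maximality. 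Hence $V$ is projective as a $k[N_G(P,e)/P]$-module; inflating through $N_G(P,e)\to N_G(P,e)/P$ yields a trivial-source $k[N_G(P,e)]$-module with vertex $P$, whose Green correspondent in $kG$ is forced to be $M$ by Proposition~\ref{brauerppermproperties}(e), giving $P$ as a vertex of $M$. The $G$-conjugacy of maximal $M$-Brauer pairs then follows from the $G$-conjugacy of the vertices of $M$ together with the $N_G(P)$-orbit structure on the block idempotents $e$ with $eM(P)\neq 0$.

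For part (c), set $I:=N_G(P,e)$. The key observation is the $k[N_G(P)]$-module decomposition $M(P)=\bigoplus_{g\in[N_G(P)/I]}{}^g(eM(P))$, which identifies $M(P)\cong\Ind_I^{N_G(P)}(eM(P))$, using that the $N_G(P)$-orbit of the central idempotent $e\in kC_G(P)$ consists of pairwise orthogonal block idempotents. Since $M(P)$ is indecomposable as a $k[N_G(P)]$-module by Proposition~\ref{brauerppermproperties}(e) (it is the Green correspondent of $M$), any decomposition of $eM(P)$ as a $kIe$-module would induce a nontrivial decomposition of $M(P)$; hence $M(P,e)$ is indecomposable, and likewise for $N(P,e)$. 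An isomorphism $M(P,e)\cong N(P,e)$ then induces $M(P)\cong\Ind_I^{N_G(P)}M(P,e)\cong\Ind_I^{N_G(P)}N(P,e)\cong N(P)$, and Green correspondence yields $M\cong N$; the reverse direction is immediate.

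The main obstacle is the converse direction of part (b): once we know that $V=eM(P)$ has no indecomposable summand with nontrivial vertex in $N_G(P,e)/P$, concluding that $P$ is itself a vertex of $M$ requires a careful block-compatible lifting of the resulting projective $k[I/P]$-module through inflation and Green correspondence, ensuring the lift genuinely recovers $M$ and not merely some trivial-source $kG$-module with vertex $P$.
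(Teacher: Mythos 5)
The paper does not prove this proposition; it is cited directly from~\cite{BoPe20}, so there is no in-paper argument to compare against. Your proof of (a) is correct, and your proof of (c) is correct modulo (b): the decomposition $M(P)\cong\Ind_I^{N_G(P)}(eM(P))$ is legitimate once one knows (from part (b)) that the blocks $e'$ of $kC_G(P)$ with $e'M(P)\neq 0$ form a single $N_G(P)$-orbit, which follows because $M(P)$ is an indecomposable $k[N_G(P)]$-module when $P$ is a vertex.

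There is, however, a genuine gap in the converse direction of (b), precisely at the point you yourself flag. You correctly argue that maximality of $(P,e)$ forces $V:=eM(P)$ to be projective as a $k[N_G(P,e)/P]$-module, via the iterated Brauer identification $(eM(P))(Q)\cong\br_Q(e)M(Q)$ and the closure of the Brauer pair order. But passing from this to ``$P$ is a vertex of $M$'' is not justified by the ingredients you invoke. Proposition~\ref{brauerppermproperties}(e) concerns $k[N_G(P)/P]$, not $k[N_G(P,e)/P]$, so one must first sum over the $N_G(P)$-orbit of $e$ to obtain a $k[N_G(P)]$-module summand $\epsilon M(P)\cong\Ind_{N_G(P,e)}^{N_G(P)}(eM(P))$ of $M(P)$ that is projective over $N_G(P)/P$. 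Even then, Proposition~\ref{brauerppermproperties}(e) is only a bijection on isomorphism classes of indecomposables with vertex exactly $P$; it does not by itself say that an indecomposable $M$ whose vertex $P^*$ strictly contains $P$ cannot have $M(P)$ with a nonzero projective $k[N_G(P)/P]$-summand. The fact one actually needs is the sharper statement that every indecomposable summand of $M(P)$ as a $k[N_G(P)]$-module has a vertex that strictly contains $P$ whenever $P\lneq P^*$ (indeed the vertices contain $N_{{}^gP^*}(P)$ for suitable $g$). The only related result quoted in the paper, \cite[Lemma 3.7(a)]{BoPe20}, gives the opposite-direction bound (vertices of summands of $M(P)$ are \emph{contained in} a vertex of $M$) and does not suffice. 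Your phrase ``whose Green correspondent in $kG$ is forced to be $M$'' is exactly the missing step, and as written it does not follow from Proposition~\ref{brauerppermproperties}(e). To close the gap one should either prove the refined vertex statement above, or argue via the multiplicity form of the Green correspondence for trivial source modules: the number of indecomposable summands of $M$ isomorphic to a fixed $N$ with vertex $P$ equals the number of summands of $M(P)$ isomorphic to $N(P)$, so a nonzero projective summand of $M(P)$ forces $M$ itself to have a summand with vertex $P$, hence $M$ has vertex $P$ by indecomposability.
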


    One may also define Brauer pairs for virtual $p$-permutation modules. 
 
	\begin{definition}{\cite[Definition 9.6]{BoPe20}}
		Let $\omega \in T(kG)$. A Brauer pair $(P,e)$ of $kG$ is called an $\omega$-Brauer pair if $\omega(P,e) = e\omega(P) \neq 0$ in $T(k[N_G(P,e)])$. The set of $\omega$-Brauer pairs is denoted $\mathcal{BP}(\omega)$. 
	\end{definition}
	
	\begin{remark}
		\begin{enumerate}
            \item If $\gamma \in T^\Delta(A,B)$ is a $p$-permutation equivalence for $A$ and $B$ and $\Delta(P,\phi,Q) \subset G\times H$ is a twisted diagonal subgroup, then via restriction, one can also view $\gamma(\Delta(P,\phi, Q)) \in T(k[N_{G\times H}(\Delta(P, \phi, Q))])$ as an element of the corresponding Grothendieck group of $C_G(P)\times C_H(Q) = C_{G\times H}(\Delta(P, \phi, Q))$.
			\item Let $\Delta(P,\phi,Q)$ be a twisted diagonal $p$-subgroup of $G\times H$. If $(P,e)$ is a Brauer pair of $kG$ and $(Q,f)$ is a Brauer pair of $kH$, then $(\Delta(P,\phi,Q),e\otimes f^*)$ is a Brauer pair of $k[G\times H]$, and conversely, if $(\Delta(P,\phi,Q), e\otimes f^*)$ is a Brauer pair of $k[G\times H]$, then $(P,e)$ is a Brauer pair of $kG$ and $(Q,f)$ is a Brauer pair of $kH$. In this case, with $I := N_G(P,e)$ and $J:= N_H(Q,f)$, one has $N_{G\times H}(\Delta(P,\phi, Q), e\otimes f^*) = N_{I\times J}(\Delta(P,\phi, Q))$ and $e\gamma(P,\phi, Q)f \in T^\Delta(k[N_{I\times J}(\Delta(P, \phi, Q))](e\otimes f^*))$. Moreover, $(\Delta(P,\phi, Q), e\otimes f^*)$ is an $A\otimes B^*$-Brauer pair if and only if $(P,e)$ is an $A$-Brauer pair and $(Q,f)$ is a $B$-Brauer pair. Denote the set of $A\otimes B^*$-Brauer pairs $(X,d)$ where $X\leq G\times H$ is a twisted diagonal subgroup by $\mathcal{BP}^\Delta(A,B).$
			\item Let $\Delta(P', \phi', Q') \leq \Delta(P,\phi, Q)$ be twisted diagonal subgroups of $G\times H$, and let $(\Delta(P', \phi', Q'), e'\otimes f'^*)$ and $(\Delta(P, \phi, Q), e\otimes f^*)$ be $k[G\times H]$-Brauer pairs. Then $(\Delta(P', \phi', Q'), e'\otimes f'^*) \leq (\Delta(P, \phi, Q), e\otimes f^*)$ if and only if $(P',e') \leq (P,e)$ and $(Q',f') \leq (Q,f)$. 
		\end{enumerate}
	\end{remark}
	
	The following proposition gives convenient reformulations of being a Brauer pair for a $p$-permutation equivalence.
	
	\begin{prop}{\cite[Proposition 10.8]{BoPe20}}
		Suppose $\gamma$ is a $p$-permutation equivalence for $A$ and $B$. Let $(\Delta(P,\phi, Q), e\otimes f^*) \in \mathcal{BP}^\Delta(A,B)$ and set $I := N_G(P,e)$ and $J := N_H(Q,f)$. The following are equivalent:
		\begin{enumerate}
			\item $(\Delta(P, \phi, Q), e\otimes f^*) \in \mathcal{BP}(\gamma)$, i.e. $e\gamma(P, \phi, Q)f \neq 0$ in $T(k[N_{I\times J}(\Delta(P, \phi, Q))](e\otimes f^*))$.
			\item $e \gamma(P,\phi, Q)f \neq 0$ in $T(k[C_G(P)]e, k[C_H(Q)]f)$.
			\item $(\Delta(P,\phi, Q), e\otimes f^*) \in \mathcal{BP}(M)$ for some indecomposable module $M \in {}_A\triv_B$ appearing in $\gamma$.
		\end{enumerate}
	\end{prop}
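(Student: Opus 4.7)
The plan is to expand $\gamma$ in the $\mathbb{Z}$-basis of indecomposable twisted diagonal $(A, B)$-bimodules as $\gamma = \sum_M a_M [M]$, so that
\[
e\gamma(\Delta(P,\phi,Q))f = \sum_M a_M [eM(\Delta(P,\phi,Q))f]
\]
in every relevant Grothendieck group, and then analyze when this formal sum vanishes. The implications (a)$\Rightarrow$(c) and (b)$\Rightarrow$(c) are immediate: if every actual bimodule $eM(\Delta(P,\phi,Q))f$ is zero for $M$ appearing in $\gamma$, the formal sum vanishes in any Grothendieck group. The implication (b)$\Rightarrow$(a) follows by restriction along $C_G(P) \times C_H(Q) \leq N_{I\times J}(\Delta(P,\phi,Q))$ (Lemma \ref{prop:proj&kernelsforspecialsubgroups}): restriction is additive, and the vanishing of all coefficients in the basis of indecomposables over the larger algebra forces the restriction to also vanish.

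The substantive direction is (c)$\Rightarrow$(b). Suppose $eM_0(\Delta(P,\phi,Q))f \neq 0$ for some indecomposable $M_0$ in $\gamma$ with $a_{M_0} \neq 0$. By Proposition \ref{prop:M-Brauer pair properties}(a)-(b), extend $(\Delta(P,\phi,Q), e\otimes f^*)$ to a maximal $M_0$-Brauer pair $(\Delta(D,\sigma,E), d\otimes d'^*)$, where $\Delta(D,\sigma,E)$ is a vertex of $M_0$. Apply the Brauer construction at $\Delta(D,\sigma,E)$: iterating Proposition \ref{brauerppermproperties}(d) along a chain of normally embedded twisted diagonal subgroups from $\Delta(P,\phi,Q)$ up to $\Delta(D,\sigma,E)$, and using Lemma \ref{lem:braur construction for iM} to transport the idempotent cuts through each step, yields
\[
d\gamma(\Delta(D,\sigma,E))d' = \sum_M a_M [dM(\Delta(D,\sigma,E))d'].
\]

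By Proposition \ref{brauerppermproperties}(e), $dM_0(\Delta(D,\sigma,E))d'$ is the indecomposable projective Green correspondent of $M_0$ over the appropriate cut normalizer quotient algebra. For every other indecomposable $M \not\cong M_0$ appearing in $\gamma$, either $dM(\Delta(D,\sigma,E))d' = 0$ or $M$ has a vertex containing $\Delta(D,\sigma,E)$; in the latter case, standard vertex-control for indecomposable summands of Brauer constructions together with the bijection in Proposition \ref{brauerppermproperties}(e) ensures that $M_0$'s Green correspondent cannot appear as a summand of $dM(\Delta(D,\sigma,E))d'$. Hence the Green correspondent of $M_0$ contributes with coefficient exactly $a_{M_0} \neq 0$ to $d\gamma(\Delta(D,\sigma,E))d'$, so this virtual module is nonzero. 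Consequently $e\gamma(\Delta(P,\phi,Q))f$ cannot vanish in any Grothendieck group, establishing (b).

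The main obstacle is combining the Brauer construction at two nested twisted diagonal subgroups while preserving the idempotent cuts — this requires refining the Brauer pair containment $(\Delta(P,\phi,Q),e\otimes f^*) \leq (\Delta(D,\sigma,E), d\otimes d'^*)$ into a chain of normally embedded Brauer pairs so that Proposition \ref{brauerppermproperties}(d) and Lemma \ref{lem:braur construction for iM} can be applied iteratively — and then leveraging the bijection in Proposition \ref{brauerppermproperties}(e) to isolate the contribution of the single indecomposable summand $M_0$ via its Green correspondent. Once non-cancellation is pinned down this way, the three conditions are seen to be equivalent via the chain (b)$\Rightarrow$(a)$\Rightarrow$(c)$\Rightarrow$(b).
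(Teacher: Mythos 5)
This proposition is cited from \cite{BoPe20} without a proof in the paper, so I'll evaluate your argument on its own terms.

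Your dispatch of (a)$\Rightarrow$(c), (b)$\Rightarrow$(c), and (b)$\Rightarrow$(a) is fine. The problem is in the substantive direction. First, the non-cancellation step is asserted rather than proved: you claim that for $M \not\cong M_0$ with a vertex strictly containing $\Delta(D,\sigma,E)$, the Green correspondent $dM_0(\Delta(D,\sigma,E))d'$ cannot occur as a summand of $dM(\Delta(D,\sigma,E))d'$, and you attribute this to ``standard vertex-control'' plus the bijection in Proposition~\ref{brauerppermproperties}(e). Neither of those gives it. The cited Lemma 3.7 of BoPe20 only bounds vertices of summands of $M(\Delta(D,\sigma,E))$ from \emph{above} by vertices of $M$; it does not exclude a summand having vertex exactly $\Delta(D,\sigma,E)$. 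What you actually need is a triangularity statement: if $\Delta(D,\sigma,E)$ is \emph{strictly} contained (up to conjugacy) in every vertex of $M$, then every indecomposable summand of $M(\Delta(D,\sigma,E))$, as a module for the normalizer, has vertex strictly containing $\Delta(D,\sigma,E)$, so nothing projective (over $N/\Delta(D,\sigma,E)$) can occur. This is true, but it needs its own argument (e.g.\ $M \mid k[(G\times H)/W]$ for a vertex $W$, and the $\Delta(D,\sigma,E)$-fixed points of $(G\times H)/W$ all have stabilizer $N_{{}^gW}(\Delta(D,\sigma,E))$, which is a $p$-group strictly containing $\Delta(D,\sigma,E)$ unless $\Delta(D,\sigma,E) = {}^gW$), and one must further check this survives the cut by $d\otimes d'^*$ and the passage to $N_{G\times H}(\Delta(D,\sigma,E), d\otimes d'^*)$.

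Second, and more fundamentally, even granting non-cancellation your argument lands on (a), not (b). The iterated Brauer construction along a chain of Brauer pairs from $(\Delta(P,\phi,Q), e\otimes f^*)$ up to $(\Delta(D,\sigma,E), d\otimes d'^*)$ must be carried out over normalizer groups --- at each step one applies $(-)(V_{i+1})$, which requires $V_{i+1}$ to lie in the ambient group, and $V_{i+1} \supsetneq V_i$ generally does not lie in $C_{G\times H}(V_i)$. So the contrapositive you obtain is: $e\gamma(\Delta(P,\phi,Q))f = 0$ in $T(k[N_{I\times J}(\Delta(P,\phi,Q))](e\otimes f^*))$ forces $d\gamma(\Delta(D,\sigma,E))d' = 0$. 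Its negation gives exactly statement (a). Concluding ``cannot vanish in any Grothendieck group'' slips past the genuine content of (a)$\Rightarrow$(b): restriction from the normalizer to the centralizer is not injective on trivial source rings in general, so non-vanishing over $N_{I\times J}(\Delta(P,\phi,Q))$ does not by itself give non-vanishing over $C_G(P)\times C_H(Q)$. With your route (b)$\Rightarrow$(a)$\Rightarrow$(c)$\Rightarrow$(a), the loop never closes back to (b); you still owe a proof of (a)$\Rightarrow$(b) or a genuine (c)$\Rightarrow$(b). This implication is where the hypothesis that $\gamma$ is a $p$-permutation equivalence (and not an arbitrary element of $T^\Delta(A,B)$) is likely doing real work, for instance via the decomposition of $e(\gamma\cdot_H\gamma^*)(\Delta(P))e \cong [k[C_G(P)]e]$ from Theorem~\ref{7.5}(d) together with the uniqueness in Proposition~\ref{uniqueness}, and your proposal does not invoke it.
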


	\begin{prop}{\cite[Corollary 10.9]{BoPe20}}\label{uniqueness}
		Let $(P,e)$ be an $A$-Brauer pair and define $\Lambda$ as in Theorem \ref{7.5} and let $\Lambda_B \subseteq \Lambda$ be the set of pairs $(\psi,(Q,f))$ such that $(Q,f)$ is a $B$-Brauer pair. There exists a unique $H$-orbit of pairs $(\psi, (Q,f)) \in \Lambda_B$ with the property that $(\Delta(P,\psi, Q), e\otimes f^*)$ is a $\gamma$-Brauer pair. 
	\end{prop}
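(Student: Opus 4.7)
The plan is to apply the decomposition formula of Theorem~\ref{7.5}(d), extended $\Z$-linearly to virtual bimodules, to the defining identity $\gamma \cdot_H \gamma^* = [A]$.

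First I would reformulate the target condition using Proposition 10.8: the requirement $(\Delta(P,\psi,Q), e\otimes f^*) \in \calB\calP(\gamma)$ is equivalent to the nonvanishing of $V_{(\psi,(Q,f))} := e\gamma(\Delta(P,\psi,Q))f$ in the trivial source group $T(kC_G(P)e, kC_H(Q)f)$. Since $H$-conjugation transports the Brauer construction equivariantly, nonvanishing of $V_{(\psi,(Q,f))}$ depends only on the $H$-orbit of $(\psi,(Q,f))$. Thus the claim reduces to showing that exactly one orbit representative $\lambda \in \widetilde{\Lambda}_B$ has $V_\lambda \neq 0$.

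Next, I would apply Theorem~\ref{7.5}(d) to $M = \gamma$, $N = \gamma^*$ with $G = K$, $(R,d) = (P,e)$, $\sigma = \id_P$, and $S = T = N_G(P,e)$. Because $\gamma$ is supported on $A\otimes B^*$-Brauer pairs, only orbits whose $H$-component lies in $\calB\calP(B)$ can contribute; that is, the sum reduces to $\widetilde{\Lambda}_B$. This yields the identity
\[
e(\gamma \cdot_H \gamma^*)(\Delta(P))e = \sum_{\lambda \in \widetilde{\Lambda}_B} V_\lambda \cdot_{kC_H(Q_\lambda)f_\lambda} V_\lambda^*
\]
in $T(kC_G(P)e, kC_G(P)e)$. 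Combining the hypothesis $\gamma \cdot_H \gamma^* = [A]$ with the identification $eA(\Delta(P))e \cong kC_G(P)e$ from Section~4, the left-hand side becomes $[kC_G(P)e]$, which is an indecomposable class since $e$ is a block idempotent of $kC_G(P)$. Existence of at least one orbit with $V_\lambda \neq 0$ is now immediate from the nonvanishing of $[kC_G(P)e]$.

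The main obstacle is uniqueness. My strategy is to show that each nonzero $V_\lambda$ satisfies $V_\lambda \cdot_{kC_H(Q_\lambda)f_\lambda} V_\lambda^* = [kC_G(P)e]$, i.e.\ that $V_\lambda$ is itself a local $p$-permutation equivalence between the centralizer block algebras $kC_G(P)e$ and $kC_H(Q_\lambda)f_\lambda$. Granting this, if $r$ distinct orbits contribute nonzero terms, the displayed identity becomes $[kC_G(P)e] = r\cdot [kC_G(P)e]$ in the Grothendieck group, forcing $r = 1$. To establish the local-equivalence property for each nonzero $V_\lambda$, I would couple the displayed identity with its symmetric counterpart obtained from $\gamma^* \cdot_G \gamma = [B]$ applied at $(Q_\lambda,f_\lambda)$, and use the Krull–Schmidt theorem in the trivial source ring together with the Boltje–Perepelitsky structure theory for indecomposable summands of $p$-permutation equivalences, possibly by inducting down from a maximal $\gamma$-Brauer pair lying above $(\Delta(P,\psi_\lambda,Q_\lambda), e\otimes f_\lambda^*)$, where the local equivalence property is directly available.
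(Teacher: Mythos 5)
The paper does not give its own proof of this statement; it is quoted verbatim from \cite[Corollary 10.9]{BoPe20}. The closest in-paper analogue is Lemma~\ref{lem: orbit determined by A Brauer pair}, which proves the same dichotomy for the \emph{chain complex} $C$ rather than the virtual bimodule $\gamma = \Lambda(C)$, and its proof works cleanly because the Krull--Schmidt theorem holds in the homotopy category: once the decomposition of Theorem~\ref{7.5}(d) is applied to $C\otimes_B C^* \simeq A$, indecomposability of $k[C_G(P)]e$ forces exactly one summand to survive.

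Your proposal transplants that decomposition to the Grothendieck group, and the existence half is fine: if every $V_\lambda$ were zero, the right-hand side would be zero, contradicting $[k[C_G(P)]e]\neq 0$. The genuine gap is in the uniqueness half, and you have correctly identified where it is, but not closed it. In $T(k[C_G(P)]e,k[C_G(P)]e)$ the summands $V_\lambda\cdot V_\lambda^*$ are virtual elements and may cancel against one another, so the identity $\sum_\lambda V_\lambda\cdot V_\lambda^* = [k[C_G(P)]e]$ does not by itself preclude two or more orbits with $V_\lambda\neq 0$; there is no Krull--Schmidt theorem at the level of the Grothendieck group. Your proposed fix is to show that every nonzero $V_\lambda$ satisfies $V_\lambda\cdot V_\lambda^* = [k[C_G(P)]e]$, but this is essentially \cite[Theorem 11.4 / Proposition 11.5]{BoPe20} (Theorem~\ref{localpperms} and Corollary~\ref{maximallocalbrauerpairs} here), which appear \emph{after} Corollary 10.9 in the logical development of \cite{BoPe20} and whose proofs rely on knowing which local pieces survive; invoking them here is circular. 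You also do not supply the argument, only a pointer (``use the Boltje--Perepelitsky structure theory \ldots possibly by inducting down''). What is actually needed to settle the sign problem is a positivity input --- for example a positive-definite bilinear form on the trivial source groups under which the orthogonal-unit property of $\gamma$ forces the $V_\lambda\cdot V_\lambda^*$ to be nonnegative contributions that cannot cancel --- and your sketch does not produce one. As written, the uniqueness part is not a proof.
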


        \begin{prop}\cite[Theorem 10.10]{BoPe20}\label{prop:breaking up a p-perm equiv} Suppose $\gamma$ is a $p$-permutation equivalence for $A$ and $B$. Let $\mathcal{I}$ denote the set of primitive idempotents in $Z(A)$ and let $\mathcal{J}$ denote the set of primitive idempotents in $Z(B)$.
        \begin{enumerate}
         \item For every $e\in\mathcal{I}$, there exists a unique $f_e\in \mathcal{J}$ such that $e\gamma f_e \neq 0$. The assignment $e \mapsto f_e$ establishes a bijection $f:\mathcal{I}\to \mathcal{J}$. 
        
            \item We have $\gamma= \sum_{e\in \mathcal{I}}e\gamma f_e$. Moreover, for every $e \in \mathcal{I}$, $e\gamma f_e$ is a $p$-permutation equivalence for $kGe$ and $kGf_e$. 
            
            \item For every $e \in \mathcal{I}$, the $e\gamma f_e$-Brauer pairs are precisely the $\gamma$-Brauer pairs which contain $(1,e\otimes f_e^*)$. Thus, we have $\mathcal{BP}(\gamma) = \coprod_{e\in \mathcal{I}}\mathcal{BP}(e\gamma f_e)$. 
        \end{enumerate}
        \begin{proof}
            For claims (a) and (b), see \cite[Theorem 10.10]{BoPe20}. For (c), let $\lambda : = (\Delta(P,\phi,Q),b\otimes c^*)$ be a Brauer pair for $k[G\times H]$. Then
            \[
                (e\gamma f_e)(\lambda)= b(e\gamma f_e)(\Delta(P,\phi,Q))c = b\Br_P(e)\gamma(\Delta(P,\phi,Q))\Br_Q(f_e)c.
            \]
            The condition that $b\Br_P(e)\neq 0$ and $\Br_Q(f_e)c \neq 0$ is equivalent to $b\Br_P(e) = b$ and $\Br_Q(f_e)c = c$ and also to $(1,e\otimes f_e^*) \leq \lambda$. Considering the above equation, it follows immediately that $\lambda$ is a $e\gamma f_e$-Brauer pair if and only if $\lambda$ is a $\gamma$-Brauer pair and $(1,e\otimes f_e^*) \leq \lambda$. 
        \end{proof}
     
         \end{prop}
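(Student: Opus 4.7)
The plan is to derive (a) and (b) from the invertibility equations for $\gamma$ together with Proposition~\ref{uniqueness}, and then to verify (c) via the direct Brauer construction computation already given. I would begin by writing $\gamma = \sum_{e \in \mathcal{I},\, f \in \mathcal{J}} e\gamma f$ in $T^\Delta(A,B)$, noting each summand lies in $T^\Delta(kGe, kHf)$.

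For part (a), I would apply Proposition~\ref{uniqueness} at the $A$-Brauer pair $(1, e)$. Any isomorphism $\psi : Q \xrightarrow{\sim} 1$ forces $Q = 1$, so the candidate Brauer pair must be of the form $(1, e \otimes f^*)$, and being a $\gamma$-Brauer pair is equivalent to $e\gamma f \neq 0$. The uniqueness clause in Proposition~\ref{uniqueness} therefore yields a unique $f_e \in \mathcal{J}$ with $e\gamma f_e \neq 0$. Applying the same proposition to $\gamma^* \in T^\Delta(B, A)$ (which is itself a $p$-permutation equivalence between $B$ and $A$) produces a reverse assignment $f \mapsto e_f$, and by uniqueness these two assignments are mutually inverse bijections.

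For part (b), since only the summands $e\gamma f_e$ are nonzero, $\gamma = \sum_{e \in \mathcal{I}} e\gamma f_e$. Expanding
\[
\sum_{e \in \mathcal{I}} [Ae] \;=\; [A] \;=\; \gamma \cdot_H \gamma^* \;=\; \sum_{e, e' \in \mathcal{I}} (e \gamma f_e) \cdot_H (f_e \gamma^* e')
\]
and using orthogonality of the $e \in \mathcal{I}$ forces $(e\gamma f_e) \cdot_H (e\gamma f_e)^* = [Ae]$ for each $e$; the analogous manipulation of $\gamma^* \cdot_G \gamma = [B]$ gives the reverse invertibility $(e\gamma f_e)^* \cdot_G (e\gamma f_e) = [kHf_e]$. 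Together with the twisted diagonal condition inherited from $\gamma$, this shows that $e\gamma f_e$ is a $p$-permutation equivalence for $kGe$ and $kHf_e$.

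The main obstacle is part (a): establishing the bijection requires the genuinely nontrivial Proposition~\ref{uniqueness}. The key observation that makes it work is that specializing to $P = 1$ collapses the twisted diagonal structure ($Q$ is forced to be trivial, and the isomorphism $\psi$ is unique), so the uniqueness of the $H$-orbit of triples translates cleanly into the uniqueness of a single block idempotent $f_e \in \mathcal{J}$. Once (a) and (b) are established, part (c) is the short Brauer construction computation already presented in the excerpt.
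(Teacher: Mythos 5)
Your proposal for parts (a) and (b) is correct and takes a genuinely different route from the paper, which handles these two items by simply citing \cite[Theorem 10.10]{BoPe20}. You instead derive (a) by specializing Proposition~\ref{uniqueness} to the $A$-Brauer pair $(1,e)$: the constraint $Q \cong P = 1$ collapses $\Lambda_B$ to a set indexed by the block idempotents in $\mathcal{J}$, the $H$-action on this set is trivial (block idempotents of $kH$ are central), so the unique $H$-orbit promised by Proposition~\ref{uniqueness} is literally a unique $f_e \in \mathcal{J}$, and since $\gamma(\Delta(1)) = \gamma$ the Brauer-pair condition reduces to $e\gamma f_e \neq 0$. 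Applying the same argument to $\gamma^*$ and invoking uniqueness gives the mutually inverse assignments, hence bijectivity. For (b), the orthogonality argument comparing the $e$-components of $\sum_e [Ae] = [A] = \gamma \cdot_H \gamma^*$ is sound; note only that the displayed expansion should read $\sum_{e,e'}(e\gamma f_e)\cdot_H(f_{e'}\gamma^* e')$ before collapsing via $f_e f_{e'} = \delta_{e,e'}f_e$, a harmless notational slip. The trade-off between the two approaches: the paper's bare citation keeps the exposition short, while your argument makes the statement self-contained modulo Proposition~\ref{uniqueness} (itself \cite[Corollary 10.9]{BoPe20}, which precedes Theorem 10.10 in that reference, so there is no circularity) and illuminates how the twisted-diagonal machinery degenerates at the trivial subgroup. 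For (c) you defer to the direct Brauer-construction computation, which is exactly the paper's treatment.
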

         
	\begin{theorem}{\cite[Theorem 10.11]{BoPe20}}\label{thm:maximal p-perm brauer pairs}
		\begin{enumerate}
			\item The set of $\gamma$-Brauer pairs forms a $G\times H$-stable ideal in the poset of $A\otimes B^*$-Brauer pairs.
			\item If $A$ and $B$ are blocks, then any two maximal $\gamma$-Brauer pairs are $G\times H$-conjugate.
			\item For $(\Delta(P,\phi, Q), e\otimes f^*)\in \mathcal{BP}(\gamma)$, the following are equivalent:
			\begin{enumerate}[label=(\roman*)]            
                \item $(\Delta(P,\phi, Q), e\otimes f^*)$ is a maximal $\gamma$-Brauer pair;
				\item $(P,e)$ is a maximal $A$-Brauer pair;
				\item $(Q,f)$ is a maximal $B$-Brauer pair.
			\end{enumerate}
		\end{enumerate}
	\end{theorem}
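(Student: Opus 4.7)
My plan is to establish (a) first, then use it together with Proposition~\ref{uniqueness} to deduce (c), and finally derive (b) from (c) and Proposition~\ref{uniqueness}.

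The $G\times H$-stability in (a) is immediate from the $G\times H$-stability of $\gamma$ as a virtual module combined with the compatibility of the Brauer construction with conjugation recorded in Proposition~\ref{brauercommuteswithdualsandconj}. For the ideal property, I would invoke the preceding proposition \cite[Proposition~10.8]{BoPe20}(c): if $(\Delta(P,\phi,Q), e\otimes f^*) \in \mathcal{BP}(\gamma)$, then it lies in $\mathcal{BP}(M)$ for some indecomposable summand $M$ of $\gamma$. Any smaller $A\otimes B^*$-Brauer pair then also lies in $\mathcal{BP}(M)$ by Proposition~\ref{prop:M-Brauer pair properties}(a), and hence in $\mathcal{BP}(\gamma)$.

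For (c), I first prove (ii)$\Rightarrow$(i). Assume $(P,e)$ is a maximal $A$-Brauer pair and that $(\Delta(P',\phi',Q'), e'\otimes f'^*)$ is a $\gamma$-Brauer pair containing $(\Delta(P,\phi,Q), e\otimes f^*)$. Maximality of $(P,e)$ forces $(P',e')=(P,e)$, so $\Delta(P,\phi',Q')\supseteq\Delta(P,\phi,Q)$ yields $Q\leq Q'$ and $\phi'|_Q=\phi$. Proposition~\ref{uniqueness} places $(\phi,(Q,f))$ and $(\phi',(Q',f'))$ in a single $H$-orbit, forcing $|Q|=|Q'|$ and hence the extension is trivial. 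For (i)$\Rightarrow$(ii), suppose $(\Delta(P,\phi,Q),e\otimes f^*)$ is maximal but $(P,e)<(\tilde P,\tilde e)$ for some maximal $A$-Brauer pair $(\tilde P,\tilde e)$. Proposition~\ref{uniqueness} applied at $(\tilde P,\tilde e)$ yields a $\gamma$-Brauer pair $(\Delta(\tilde P,\tilde\phi,\tilde Q),\tilde e\otimes\tilde f^*)$. Setting $\tilde Q_0:=\tilde\phi^{-1}(P)$, the unique $A\otimes B^*$-Brauer pair $(\Delta(P,\tilde\phi|_{\tilde Q_0},\tilde Q_0),e\otimes f_0^*)$ sitting below it with first component $(P,e)$ is again a $\gamma$-Brauer pair by the ideal property from~(a). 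Proposition~\ref{uniqueness} at $(P,e)$ then conjugates this to $(\Delta(P,\phi,Q),e\otimes f^*)$ by some $h\in H$; applying $(1,h)$ to $(\Delta(\tilde P,\tilde\phi,\tilde Q),\tilde e\otimes\tilde f^*)$ (using the $G\times H$-stability from (a)) produces a $\gamma$-Brauer pair strictly above $(\Delta(P,\phi,Q),e\otimes f^*)$, contradicting maximality. Symmetric arguments give (iii)$\Rightarrow$(i) and (i)$\Rightarrow$(iii).

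For (b), let $(\Delta(P_i,\phi_i,Q_i),e_i\otimes f_i^*)$ for $i=1,2$ be maximal $\gamma$-Brauer pairs. By (c), $(P_1,e_1)$ and $(P_2,e_2)$ are maximal $A$-Brauer pairs, hence $G$-conjugate; after conjugating by some $(g,1)$ we may assume they coincide. Proposition~\ref{uniqueness} then puts $(\phi_i,(Q_i,f_i))$ in a common $H$-orbit, yielding the desired $G\times H$-conjugacy. I expect the main obstacle to be the implication (i)$\Rightarrow$(ii) in (c): it requires carefully combining the existence and uniqueness halves of Proposition~\ref{uniqueness} with the ideal property of (a), while correctly tracking how the $H$-action on the pair $(\tilde\phi|_{\tilde Q_0},(\tilde Q_0,f_0))$ lifts to a conjugation of the larger $\gamma$-Brauer pair at $(\tilde P,\tilde e)$.
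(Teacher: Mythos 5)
The paper does not prove this theorem itself (it is imported verbatim from \cite{BoPe20}), so there is no in-paper argument to compare against. On its own terms your proposal is correct, and your deduction order (a) $\Rightarrow$ (c) $\Rightarrow$ (b) is sound. A few remarks on the details. For (a), invoking Proposition \cite[Prop.~10.8]{BoPe20} part (c) together with Proposition~\ref{prop:M-Brauer pair properties}(a) is exactly the right move; the only point worth spelling out is that any $A\otimes B^*$-Brauer pair below a twisted-diagonal one is again twisted diagonal, so 10.8 really does apply to the smaller pair. For (c) (ii)$\Rightarrow$(i), you do not actually need Proposition~\ref{uniqueness}: once maximality of $(P,e)$ forces $(P',e')=(P,e)$, the equality $|Q'|=|P'|=|P|=|Q|$ together with $Q\leq Q'$ already gives $Q=Q'$ and hence equality of the pairs; the detour through $H$-orbits is harmless but superfluous. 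The implication (i)$\Rightarrow$(ii) is where the real content lies, and your argument there is correct; the key verification — that conjugating $(\Delta(\tilde P,\tilde\phi,\tilde Q),\tilde e\otimes\tilde f^*)$ by $(1,h)$ yields a $\gamma$-Brauer pair containing $(\Delta(P,\phi,Q),e\otimes f^*)$ — does go through, using $\tilde\phi c_{h^{-1}}|_Q=\phi$, $Q={}^h\tilde Q_0\leq{}^h\tilde Q$, $(P,e)\leq(\tilde P,\tilde e)$, and $(Q,f)={}^h(\tilde Q_0,f_0)\leq({}^h\tilde Q,{}^h\tilde f)$. The symmetric statement (i)$\Leftrightarrow$(iii) requires observing that $\gamma^*$ is again a $p$-permutation equivalence (for $B$ and $A$) and that $\calB\calP(\gamma^*)=\{\omega^\circ : \omega\in\calB\calP(\gamma)\}$ via Proposition~\ref{brauercommuteswithdualsandconj}, which you implicitly use; it would be worth saying so explicitly. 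Your derivation of (b) from (c) and Proposition~\ref{uniqueness} is correct and relies, as it must, on $A$ being a single block so that its maximal Brauer pairs form one $G$-orbit.
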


	Finally, it was proved that applying the Brauer construction to $p$-permutation equivalences at Brauer pairs induces local $p$-permutation equivalences on multiple levels.
	
	\begin{prop}{\cite[Proposition 11.1]{BoPe20}}\label{prop:iso determined by a brauer pair}
		Let $(\Delta(P,\phi, Q), e\otimes f^*)$ be a $\gamma$-Brauer pair and set $I:= N_G(P,e)$ and $J := N_H(Q,f)$. For every $g \in I$ there exists a unique element $hC_H(Q)\in J/C_H(Q)$ such that $c_g \circ \phi = \phi \circ c_h: Q \to P$. Similarly for every $h \in J$, there exists a unique element $gC_G(P) \in I/C_G(P)$ such that $c_g\circ \phi = \phi \circ c_h$. These associations define mutually inverse group isomorphisms between $I/C_G(P)$ and $J/C_H(Q)$. The isomorphism $J/C_H(Q) \cong I/C_G(P)$ restricts to an isomorphism $QC_H(Q)/Q \cong PC_G(P)/P, hC_H(Q) \mapsto \phi(h)C_G(P)$, for $h \in Q$. The group $Y:= N_{G\times H}(\Delta(P,\phi, Q), e\otimes f^*)$ satisfies $p_1(Y) = I, p_2(Y) = J, k_1(Y) = C_G(P), k_2(Y) = C_H(Q)$.
	\end{prop}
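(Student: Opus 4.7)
My plan is to establish the four assertions in the order: projections of $Y$, kernels of $Y$, construction of the mutually inverse isomorphisms, and finally the restriction to inner-automorphism subgroups. The main obstacle will be showing the reverse inclusions $I\subseteq p_1(Y)$ and $J\subseteq p_2(Y)$, which is exactly where the $\gamma$-Brauer pair hypothesis (as opposed to $(\Delta(P,\phi,Q),e\otimes f^*)$ being merely an $A\otimes B^*$-Brauer pair) becomes essential; for a generic twisted diagonal Brauer pair, the projection $N_\phi$ of Lemma~\ref{prop:proj&kernelsforspecialsubgroups} need not contain $I$.

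I would first dispatch the easy containments $p_1(Y)\subseteq I$ and $p_2(Y)\subseteq J$, which are immediate from the definition of $Y$: any $(g,h)\in Y$ normalizes $\Delta(P,\phi,Q)$, so $g\in N_G(P)$ and $h\in N_H(Q)$, and fixes $e\otimes f^*$, so $g$ fixes $e$ and $h$ fixes $f$. The kernels come next: $(g,1)\in Y$ forces $c_g\phi=\phi$, hence $g\in C_G(P)$; conversely, every $g\in C_G(P)$ lies in $I$ because $e$ is central in $kC_G(P)$, and then $(g,1)$ trivially stabilizes both $\Delta(P,\phi,Q)$ and $e\otimes f^*$. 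Thus $k_1(Y)=C_G(P)$, and symmetrically $k_2(Y)=C_H(Q)$.

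To prove the key inclusion $I\subseteq p_1(Y)$, I would fix $g\in I$ and pass to the $(g,1)$-conjugate of the given Brauer pair; since ${}^gP=P$ and ${}^ge=e$, this equals $(\Delta(P,c_g\phi,Q),e\otimes f^*)$, which is again a $\gamma$-Brauer pair by the $G\times H$-stability part of Theorem~\ref{thm:maximal p-perm brauer pairs}. Hence both $(\phi,(Q,f))$ and $(c_g\phi,(Q,f))$ lie in the set $\Lambda_B$ of Proposition~\ref{uniqueness}, and the uniqueness of the $H$-orbit asserted there produces $h_0\in H$ with $(c_g\phi,(Q,f))=(\phi c_{h_0^{-1}},{}^{h_0}(Q,f))$. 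The second coordinate forces $h_0\in N_H(Q,f)=J$, and the first yields $c_g\phi=\phi c_h$ for $h:=h_0^{-1}\in J$, so $(g,h)\in Y$. Uniqueness of $h$ modulo $C_H(Q)$ follows from the bijectivity of $\phi$: if $\phi c_{h_1}=\phi c_{h_2}$ then $c_{h_1}=c_{h_2}$ on $Q$, so $h_1h_2^{-1}\in C_H(Q)$. The containment $J\subseteq p_2(Y)$ then follows by the same argument applied to $\gamma^*\in T^\Delta(B,A)$, whose Brauer pairs flip the components.

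Once the projections and kernels are determined, the canonical isomorphism $Y/(k_1(Y)\times k_2(Y))\xrightarrow{\sim} p_i(Y)/k_i(Y)$ from the direct-product-subgroup formalism of Section~2 identifies $I/C_G(P)$ with $J/C_H(Q)$ via mutually inverse isomorphisms sending $gC_G(P)\leftrightarrow hC_H(Q)$ whenever $(g,h)\in Y$. For the final restriction, given $h\in Q$, the element $g:=\phi(h)\in P\subseteq I$ satisfies $c_{\phi(h)}\phi=\phi c_h$ as maps $Q\to P$ simply because $\phi$ is a group homomorphism, so $(\phi(h),h)\in Y$. Thus the isomorphism $J/C_H(Q)\to I/C_G(P)$ sends $hC_H(Q)\mapsto \phi(h)C_G(P)$, and because $\phi(Q)=P$ this clearly restricts to an isomorphism $QC_H(Q)/C_H(Q)\xrightarrow{\sim} PC_G(P)/C_G(P)$ as claimed.
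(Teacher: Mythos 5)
The paper quotes this proposition verbatim from Boltje--Perepelitsky (\cite[Proposition 11.1]{BoPe20}) without reproducing its proof, so there is no in-paper argument to compare against; I will evaluate the proposal on its own merits.

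Your argument is correct. The preliminary containments $p_1(Y)\subseteq I$, $p_2(Y)\subseteq J$, and $k_1(Y)=C_G(P)$, $k_2(Y)=C_H(Q)$ do follow directly from the identity ${}^{(g,h)}\Delta(P,\phi,Q)=\Delta({}^gP,c_g\phi c_{h^{-1}},{}^hQ)$ together with the centrality of $e$ in $kC_G(P)$ (and of $f$ in $kC_H(Q)$). The substantive step $I\subseteq p_1(Y)$ is handled correctly: conjugating by $(g,1)$ for $g\in I$ produces the $\gamma$-Brauer pair $(\Delta(P,c_g\phi,Q),e\otimes f^*)$ by the $G\times H$-stability in Theorem~\ref{thm:maximal p-perm brauer pairs}(a), and the uniqueness of the $H$-orbit in Proposition~\ref{uniqueness} then forces $(c_g\phi,(Q,f))$ and $(\phi,(Q,f))$ into one orbit, whose stabilizing element $h_0$ lies in $J$ and yields $c_g\phi=\phi c_{h_0^{-1}}$. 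Your remark that this is exactly where the $\gamma$-Brauer pair hypothesis is used — in contrast to a generic twisted-diagonal Brauer pair, for which $p_1(N_{G\times H}(\Delta(P,\phi,Q)))=N_\phi$ need not contain $I$ — correctly identifies the crux. The symmetry argument via $\gamma^*$ for $J\subseteq p_2(Y)$, the passage to the canonical isomorphism $\nu_Y$ from Section~2, and the computation $c_{\phi(h)}\phi=\phi c_h$ for $h\in Q$ are all sound. You also silently corrected a typo in the stated restriction: as the formula $hC_H(Q)\mapsto\phi(h)C_G(P)$ makes clear, the quotients should be by $C_H(Q)$ and $C_G(P)$, i.e.\ the restriction is $QC_H(Q)/C_H(Q)\xrightarrow{\sim}PC_G(P)/C_G(P)$.
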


	\begin{theorem}{\cite[Theorem 11.4]{BoPe20}}\label{localpperms}
		Let $(\Delta(P,\phi, Q), e\otimes f^*) \in \mathcal{BP}(\gamma)$ be a $\gamma$-Brauer pair and set $I:= N_G(P,e)$, $J := N_H(Q,f)$. Suppose $C_G(P)\leq S\leq I$ and $C_H(Q) \leq T\leq J$ are intermediate groups related via the isomorphism in the previous proposition, and set $Y := N_{S\times H}(\Delta(P,\phi, Q), e\otimes f^*) = N_{S\times T}(\Delta(P,\phi, Q))$. Then the element $e\gamma(P,\phi,Q)f \in T(kY(e\otimes f^*))$ after restriction to $Y$ satisfies \[k[C_G(P)]e = e\gamma(P,\phi, Q)f \cdot^{Y,Y^\circ}_H f\gamma(P,\phi, Q)^* e \in T(k[N_{S\times S}(\Delta(P))](e\otimes e^*)).\] Moreover, the element \[\overline{\gamma} := \Ind_Y^{S\times T} (e\gamma(P,\phi, Q)f) \in T^\Delta(kSe, kTf)\] satisfies $\overline{\gamma}\cdot_T \overline{\gamma}^* = [kSe] \in T^\Delta(kSe, kSe)$. In particular, if $\gamma$ is a $p$-permutation equivalence between $A$ and $B$, then $\overline{\gamma}$ is a $p$-permutation equivalence between $kSe$ and $kTf$. 
	\end{theorem}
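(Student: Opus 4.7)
The plan is to apply the Brauer construction at $\Delta(P)$, followed by multiplication by $e \otimes e^*$, to the defining identity $\gamma \cdot_H \gamma^* = [A]$ of the $p$-permutation equivalence, and then identify both sides. On the right-hand side, the isomorphism $A(\Delta(P)) \cong k[C_G(P)]\br_P(e_A)$ together with Lemma~\ref{lem:braur construction for iM} and the $A$-Brauer pair condition $\br_P(e_A)e = e$ yields $eA(\Delta(P))e = k[C_G(P)]e$ as an element of $T(k[N_{G\times G}(\Delta(P))](e \otimes e^*))$, and restriction to $N_{S\times S}(\Delta(P))$ preserves this identity.

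For the left-hand side, I would apply Theorem~\ref{7.5}(d) with $M = \gamma$, $N = \gamma^*$, $(R,d) = (P,e)$, and intermediate subgroup $S$, decomposing $e(\gamma \cdot_H \gamma^*)(\Delta(P))e$ as a direct sum over $S \times H$-orbit representatives $\widehat{\Lambda}_B$ of pairs $(\psi, (Q',f'))$ with $(Q',f')$ a $B$-Brauer pair. Proposition~\ref{uniqueness} forces only the $H$-orbit of $(\phi, (Q,f))$ to contribute a nonzero summand, and because the $S$-action stabilizes this $H$-orbit (a consequence of Proposition~\ref{prop:iso determined by a brauer pair}), a single $S \times H$-orbit survives. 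The hypothesis that $S$ and $T$ correspond under the isomorphism of Proposition~\ref{prop:iso determined by a brauer pair} gives $N_{(S,\phi,N_H(Q,f))} = S$, so $\Delta(I(\lambda))(C_G(P)\times 1) = N_{S\times S}(\Delta(P))$ and the induction in Theorem~\ref{7.5}(d) becomes trivial. Matching the unique surviving summand with the right-hand side yields the first displayed identity.

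For the ``moreover'' part, I would combine this with Bouc's isomorphism (Theorem~\ref{bouciso}) applied to $\overline{\gamma}\cdot_T\overline{\gamma}^* = \Ind^{S\times T}_Y(e\gamma(P,\phi,Q)f)\otimes_{kT}\Ind^{T\times S}_{Y^\circ}(f\gamma^*(Q,\phi\inv,P)e)$. Lemma~\ref{prop:proj&kernelsforspecialsubgroups} together with the $S$-$T$ compatibility forces $p_2(Y) = N_{(T,\phi\inv,S)} = T = p_1(Y^\circ)$, so the double coset set $p_2(Y)\backslash T/p_1(Y^\circ)$ is trivial; Lemma~\ref{lem:composing subgroups}(a) then gives $Y \ast Y^\circ = N_{S\times S}(\Delta(P))$. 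Bouc's sum collapses to a single induced extended tensor product, whose inner factor is identified with $[k[C_G(P)]e]$ by the first part of the theorem. Thus the claim reduces to $\Ind^{S\times S}_{N_{S\times S}(\Delta(P))}[k[C_G(P)]e] = [kSe]$ in $T(k[S\times S](e\otimes e^*))$, which I would verify by using Frobenius reciprocity applied to the inclusion $k[C_G(P)]e \hookrightarrow kSe$: the resulting canonical map from the induced module onto $kSe$ is surjective (its image contains $s\cdot k[C_G(P)]e$ for every $s\in S$), and the equality of dimensions $|S|/|C_G(P)|\cdot\dim k[C_G(P)]e = \dim kSe$ upgrades surjectivity to an isomorphism. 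The symmetric identity $\overline{\gamma}^*\cdot_S\overline{\gamma} = [kTf]$ follows by the same argument applied to $\gamma^*\cdot_G\gamma = [B]$.

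The main obstacle I anticipate is the careful bookkeeping of normalizers and double cosets: verifying the chain $N_{(S,\phi,N_H(Q,f))} = S$, $p_2(Y) = T = p_1(Y^\circ)$, and $Y\ast Y^\circ = N_{S\times S}(\Delta(P))$ all hinges on the precise compatibility between $S$ and $T$ provided by Proposition~\ref{prop:iso determined by a brauer pair}, and it is this combinatorial tracking that governs whether the induction in Theorem~\ref{7.5}(d) and the Bouc double-coset sum both collapse to a single term. Once this bookkeeping is handled, the two assertions follow cleanly from the Brauer-construction decomposition of Theorem~\ref{7.5}(d) combined with Bouc's isomorphism.
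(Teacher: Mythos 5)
This statement is cited from \cite{BoPe20} rather than proved in the paper, so there is no in-paper proof to compare against directly. However, your proposal is sound and closely parallels the paper's proof of the chain-complex analogue, Theorem~\ref{thm:localequivalences}, which uses exactly the same skeleton: apply the Brauer construction at $\Delta(P)$ to the defining identity, decompose via Theorem~\ref{7.5}(d), isolate the single surviving summand, verify that $N_{(S,\phi,J)}=S$ so the inner induction is trivial, induce to $S\times S$, and finally collapse Bouc's double-coset sum using $p_2(Y)=T=p_1(Y^\circ)$ and $Y\ast Y^\circ = N_{S\times S}(\Delta(P))$.

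Two minor points of divergence are worth flagging, neither of which is a gap. First, to isolate the surviving summand you invoke Proposition~\ref{uniqueness} directly, which is cleaner for the module version; the paper's chain-complex proof instead uses indecomposability of $k[C_G(P)]e$ together with Krull--Schmidt, and in fact cites the very result you are proving (Theorem~\ref{localpperms}) to establish noncontractibility of the target summand, so your route avoids any hint of circularity. Second, for the identification $\Ind^{S\times S}_{N_{S\times S}(\Delta(P))}[k[C_G(P)]e]=[kSe]$ the paper recognizes $kC_G(P)\cong\Ind^{N_{S\times S}(\Delta(P))}_{\Delta(S)}(k)$ and applies transitivity of induction, whereas you use Frobenius reciprocity plus a dimension count $(|S|/|C_G(P)|)\cdot\dim k[C_G(P)]e = \dim kSe$; your dimension count is valid because $kSe$ is free of rank $|S|/|C_G(P)|$ over $k[C_G(P)]e$, and the canonical map lands onto $kSe$ since $e$ is $S$-stable so the $S\times S$-orbit of $e$ spans $kSe$. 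Both approaches are correct; the paper's is marginally slicker in that it manifestly stays within the permutation-module framework without leaving the trivial source ring.
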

	
	\begin{corollary}{\cite[Proposition 11.5]{BoPe20}}\label{maximallocalbrauerpairs}
		Let $(\Delta(D,\phi, E), e_D\otimes f^*_E)$ be a maximal $\gamma$-Brauer pair. In particular, $D$ is a defect group of $A$ and $E$ is a defect group of $B$. For any subgroup $P\leq D$, let $(P,e_P)$ denote the unique $A$-Brauer pair with $(P,e_P) \leq (D,e_D)$ and for any subgroup $Q\leq E$ let $(Q,f_Q)$ denote the unique $B$-Brauer pair with $(Q,f_Q) \leq (E, f_E)$. Let $Q\leq E$ and set $P := \phi(Q)$. Then $(\Delta(P,\phi, Q), e_P\otimes f^*_Q)$ is a $\gamma$-Brauer pair. 
		\begin{enumerate}
			\item Set $\gamma' := e_P \gamma(P,\phi, Q)f_Q \in T^\Delta(k[C_G(P)]e_P, k[C_H(Q)]f_Q).$ $\gamma'$ is a $p$-permutation equivalence between $k[C_G(P)]e_P$ and $k[C_H(Q)]f_Q$. The $k[C_G(P)]e_P \otimes k[C_H(Q)]f^*_Q$-Brauer pair $(\Delta(C_D(P), \phi, C_E(Q)), e_{PC_D(P)}\otimes f^*_{QC_E(Q)})$ is a $\gamma'$-Brauer pair. It is a maximal $\gamma'$-Brauer pair if and only if $P$ is fully $\calA$-centralized if and only if $Q$ is fully $\calB$-centralized. 
			\item Set $I,J$ as before, and $\gamma'' := \Ind^{I\times J}_{N_{I\times J}(\Delta(P,\phi, Q)} (e_P\gamma(P,\phi, Q)f_Q) \in T^\Delta(kIe_P, kJf_Q)$. $\gamma''$ is a $p$-permutation equivalence between $kIe_P$ and $kJf_Q$. The $kIe_P \otimes kJf_Q^*$-Brauer pair \\$(\Delta(N_D(P), \phi, N_E(Q)), e_{N_D(P)}\otimes f^*_{N_H(Q)})$ is a $\gamma''$-Brauer pair. It is a maximal $\gamma''$-Brauer pair if and only if $P$ is fully $\calA$-normalized if and only if $Q$ is fully $\calB$-normalized. 
		\end{enumerate}
	\end{corollary}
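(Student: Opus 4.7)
The plan is to deduce both parts from Theorem \ref{localpperms} applied to the Brauer pair $(\Delta(P,\phi,Q), e_P\otimes f_Q^*)$ with appropriate choices of intermediate subgroups, combined with Theorem \ref{thm:maximal p-perm brauer pairs}(c) and the Alperin--Brou\'e characterization of fully centralized and fully normalized subgroups recalled from \cite{L06}. First I would verify that $(\Delta(P,\phi,Q), e_P\otimes f_Q^*)$ is indeed a $\gamma$-Brauer pair: the componentwise inclusion criterion for Brauer pairs of $k[G\times H]$ combined with $(P,e_P)\leq (D,e_D)$ and $(Q,f_Q)\leq (E,f_E)$ places this pair below the maximal $\gamma$-Brauer pair $(\Delta(D,\phi,E), e_D\otimes f_E^*)$, and the ideal property of $\mathcal{BP}(\gamma)$ in Theorem \ref{thm:maximal p-perm brauer pairs}(a) closes the argument.

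For part (a), I would take $S = C_G(P)$ and $T = C_H(Q)$, which are trivially related via Proposition \ref{prop:iso determined by a brauer pair} since the corresponding quotients are trivial. Because $S\times T$ already centralizes $\Delta(P,\phi,Q)$, the group $Y$ from Theorem \ref{localpperms} equals $S\times T$ and the induction is trivial, so Theorem \ref{localpperms} produces $\gamma' = e_P\gamma(P,\phi,Q)f_Q$ as a $p$-permutation equivalence for $k[C_G(P)]e_P$ and $k[C_H(Q)]f_Q$. To verify that $(\Delta(C_D(P),\phi,C_E(Q)), e_{PC_D(P)}\otimes f_{QC_E(Q)}^*)$ is a $\gamma'$-Brauer pair, I would iterate the Brauer construction via Proposition \ref{brauerppermproperties}(d) to identify $\gamma'(\Delta(C_D(P),\phi,C_E(Q)))$ with the appropriate idempotent cut of $\gamma(\Delta(PC_D(P),\phi,QC_E(Q)))$; nonvanishing then reduces to $(\Delta(PC_D(P),\phi,QC_E(Q)), e_{PC_D(P)}\otimes f_{QC_E(Q)}^*) \leq (\Delta(D,\phi,E), e_D\otimes f_E^*)$ being a $\gamma$-Brauer pair, which holds by the ideal property.

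For part (b), I would take $S = I$ and $T = J$, which correspond under the isomorphism $I/C_G(P)\cong J/C_H(Q)$ of Proposition \ref{prop:iso determined by a brauer pair}. Theorem \ref{localpperms} then yields that $\gamma'' = \Ind_{N_{I\times J}(\Delta(P,\phi,Q))}^{I\times J}(e_P\gamma(P,\phi,Q)f_Q)$ is a $p$-permutation equivalence for $kIe_P$ and $kJf_Q$. The main technical obstacle is verifying that $(\Delta(N_D(P),\phi, N_E(Q)), e_{N_D(P)}\otimes f_{N_E(Q)}^*)$ is a $\gamma''$-Brauer pair. I would exploit $\Delta(P,\phi,Q)\trianglelefteq \Delta(N_D(P),\phi,N_E(Q))$ together with Proposition \ref{brauerppermproperties}(d) to compute the Brauer construction in two stages: first compute $\gamma''(\Delta(P,\phi,Q))$ via a Mackey decomposition for the Brauer construction of an induced module, isolating the identity double-coset contribution which recovers $e_P\gamma(P,\phi,Q)f_Q$; then apply the second Brauer construction at $\Delta(N_D(P),\phi,N_E(Q))$ and reduce nonvanishing (after cutting by $e_{N_D(P)}\otimes f_{N_E(Q)}^*$) to the statement that $(\Delta(N_D(P),\phi,N_E(Q)), e_{N_D(P)}\otimes f_{N_E(Q)}^*) \leq (\Delta(D,\phi,E), e_D\otimes f_E^*)$ is a $\gamma$-Brauer pair.

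Finally, both maximality equivalences follow uniformly from Theorem \ref{thm:maximal p-perm brauer pairs}(c) applied to $\gamma'$ and $\gamma''$: the given pair is maximal as a $\gamma'$- respectively $\gamma''$-Brauer pair iff its first component is a maximal $k[C_G(P)]e_P$- respectively $kIe_P$-Brauer pair. The Alperin--Brou\'e characterization cited from \cite{L06} then rephrases this as $C_D(P)$ (respectively $N_D(P)$) being a defect group of the relevant local block algebra, which is precisely the definition of $P$ being fully $\calA$-centralized (respectively fully $\calA$-normalized); the analogous statements for $Q$ and $\calB$ follow by symmetry between the two sides of the equivalence. The hardest part is the Mackey bookkeeping in part (b); everything else is a direct invocation of the machinery already developed.
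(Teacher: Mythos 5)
The paper does not prove this statement; it simply cites \cite[Proposition~11.5]{BoPe20}, so there is no internal proof to compare against. Your reconstruction is sound and almost certainly parallels the argument in Boltje--Perepelitsky: the result there immediately follows their Theorem~11.4, which is Theorem~\ref{localpperms} here, and you correctly deduce the two parts by specializing $(S,T)$ to $(C_G(P),C_H(Q))$ and $(I,J)$ respectively, so that $Y=C_G(P)\times C_H(Q)$ in case~(a) (making the induction trivial) and $Y=N_{I\times J}(\Delta(P,\phi,Q))$ in case~(b). The initial verification via the componentwise order and the ideal property of $\mathcal{BP}(\gamma)$ is correct, the identification of $\gamma'(\Delta(C_D(P),\phi,C_E(Q)))$ with a cut of $\gamma(\Delta(PC_D(P),\phi,QC_E(Q)))$ via iterated Brauer constructions (Proposition~\ref{brauerppermproperties}(d) together with Lemma~\ref{lem:braur construction for iM}) is the right mechanism, and the maximality equivalences do indeed reduce to Theorem~\ref{thm:maximal p-perm brauer pairs}(c) together with the Alperin--Brou\'e characterization. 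Two small points worth spelling out if you wrote this up fully: in part~(b) you need not only to isolate the trivial double coset in the Mackey/Brauer decomposition of the induced module, but also to observe that $\Delta(P,\phi,Q)$ is normal in $\Delta(N_D(P),\phi,N_E(Q))$ (which holds because $\phi$ extends to $E\to D$), so the two-stage Brauer construction applies; and when you say the $\calB$-side ``follows by symmetry,'' the cleanest route is that both ``fully centralized/normalized'' conditions are, by Alperin--Brou\'e, equivalent to maximality of the same single $\gamma'$- (resp.\ $\gamma''$-) Brauer pair, hence to each other, rather than invoking a separate duality.
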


	\section{Brauer pairs for splendid Rickard equivalences}
     
	Each splendid Rickard equivalence $C$ induces a $p$-permutation equivalence $\gamma := \Lambda(C)$, and therefore has an associated set of $\gamma$-Brauer pairs. As with $p$-permutation equivalences, in general applying the Brauer construction at an arbitrary subgroup to $C$ will not yield a local splendid Rickard equivalence. We may define a Brauer pair for a splendid Rickard equivalence in the same way as we have for $p$-permutation equivalences, treating contractible complexes as zero objects. In fact, the Lefschetz invariant of a contractible complex is zero in the corresponding Grothendieck group, but the converse of the statement rarely holds.
 
    We will show that applying the Brauer construction at the Brauer pairs for the corresponding $p$-permutation equivalence $\gamma$ do indeed yield local splendid equivalences. Moreover, we demonstrate that applying the Brauer construction at any non-Brauer pair results in a contractible complex. The main result of this paper is the corollary of the two above results, which is that the Brauer pairs for a splendid Rickard equivalence and its corresponding $p$-permutation equivalence are the same. 
	
	For this section, suppose $A, B$ are direct summands of group algebras $kG$ and $kH$ respectively. Let $C$ be a splendid Rickard equivalence inducing a splendid equivalence between $A$ and $B$, and set $\gamma := \Lambda(C)$. Given a Brauer pair $\omega = (\Delta(P,\phi, Q),e \otimes f^*)$, denote by $\omega^\circ$ the ``opposite'' Brauer pair $(\Delta(Q, \phi\inv, P), f \otimes e^*).$

	\begin{definition}
		Let $C$ be a bounded chain complex of $p$-permutation $(A,B)$-bimodules and set $\omega = (\Delta(P,\phi,Q), e\otimes f^*) \in \mathcal{BP}^\Delta(A,B)$. We say that $\omega$ \textit{is a Brauer pair for $C$} if and only if $C(\omega) := eC(\Delta(P,\phi, Q))f \not\simeq 0$ as a complex of $k[N_{I\times J}(\Delta(P,\phi,Q))](e\otimes f^*)$-modules where $I=N_G(P,e)$ and $J=N_G(Q,f)$. Denote the set of $C$-Brauer pairs by $\mathcal{BP}(C)$. 
	\end{definition}
	
	Let $C$ be a splendid Rickard equivalence. The goal of this section is to determine the set $\calB\calP(C)$ of $C$-Brauer pairs. Notice that the Lefschetz invariant of $C(\omega)$ is $\gamma(\omega)$. Since the Lefschetz invariant of a contractible complex is zero, it is immediate that $\mathcal{BP}(\gamma) \subseteq \mathcal{BP}(C)$. However, a noncontractible complex can have Lefschetz invariant equal to zero, so it is not clear whether or not these sets are equal. 
 
 We will make liberal use of the following standard homological algebra facts.
	
	\begin{prop}\label{prop:functors extended to chain complexes}
        \begin{enumerate}
            \item Let $\mathcal{A}, \mathcal{B}$ be abelian categories, and let $F,G :\mathcal{A} \to \mathcal{B}$ be naturally isomorphic additive functors. The induced functors $\overline{F}, \overline{G}: \Ch(\mathcal{A}) \to \Ch(\mathcal{B})$ are naturally isomorphic. 
            \item Let $\calA, \calB, \calC$ be abelian categories, and let $F, G: \calA \times \calB \to \calC$ be naturally isomorphic bifunctors which are additive in each component. The induced functors $\overline{F}, \overline{G}: Ch^b(\calA) \times Ch^b(\calB) \to Bi^b(\calC)$ are naturally isomorphic, where $Bi^b(\calC)$ denotes the category of bounded $\calC$-bicomplexes. Postcomposing with the functor corresponding to taking the total complex yields a natural isomorphism $\overline{F}, \overline{G}: Ch^b(\calA)\times Ch^b(\calB) \to \Ch^b(\calC)$.
        \end{enumerate}    
		
	\end{prop}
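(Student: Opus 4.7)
The plan is to verify each statement componentwise and then transport across the totalization functor. For part (a), let $\eta : F \Rightarrow G$ be the given natural isomorphism. Given a chain complex $(C_\bullet, d_\bullet)$ in $\calA$, I would define the candidate morphism $\overline{\eta}_{C_\bullet} : \overline{F}(C_\bullet) \to \overline{G}(C_\bullet)$ in degree $n$ by $(\overline{\eta}_{C_\bullet})_n := \eta_{C_n}$. First I would check that this is a chain map: the naturality square of $\eta$ applied to the morphism $d_n : C_n \to C_{n-1}$ reads $G(d_n) \circ \eta_{C_n} = \eta_{C_{n-1}} \circ F(d_n)$, which is exactly the required compatibility with the differentials of $\overline{F}(C_\bullet)$ and $\overline{G}(C_\bullet)$. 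Since each $\eta_{C_n}$ is an isomorphism, $\overline{\eta}_{C_\bullet}$ has a degreewise inverse $\eta_{C_n}\inv$, which is itself a chain map by the same argument applied to $\eta\inv$. Naturality in $C_\bullet$: given a chain map $f_\bullet : C_\bullet \to D_\bullet$, naturality of $\eta$ at each $f_n$ gives $\eta_{D_n} \circ F(f_n) = G(f_n) \circ \eta_{C_n}$, which is the commuting square for $\overline{\eta}$ applied to $f_\bullet$, degree by degree.

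For part (b), I would run the same argument one level up, on bicomplexes. Given $C_\bullet \in \Ch^b(\calA)$ and $D_\bullet \in \Ch^b(\calB)$, applying the bifunctor $F$ componentwise produces a bicomplex whose $(m,n)$-entry is $F(C_m, D_n)$, with horizontal and vertical differentials induced by additivity from the differentials of $C_\bullet$ and $D_\bullet$; similarly for $G$. Define the candidate $\overline{\eta}_{C_\bullet, D_\bullet}$ on the $(m,n)$-entry to be $\eta_{C_m, D_n}$. Naturality of $\eta$ in the first (respectively second) argument, applied to the differential of $C_\bullet$ (respectively $D_\bullet$), yields commutativity with the horizontal (respectively vertical) differentials of the two bicomplexes. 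Invertibility and naturality in the pair $(C_\bullet, D_\bullet)$ are verified exactly as in part (a). Finally, to obtain the second statement of (b) I would invoke the fact that totalization is itself an additive functor $\operatorname{Tot} : \operatorname{Bi}^b(\calC) \to \Ch^b(\calC)$; applying $\operatorname{Tot}$ to the natural isomorphism of bicomplex-valued bifunctors produces the desired natural isomorphism of $\Ch^b(\calC)$-valued bifunctors, since $\operatorname{Tot}$ sends a componentwise isomorphism of bicomplexes to an isomorphism of total complexes (the Koszul signs in the total differential are built from the bicomplex differentials, with which $\operatorname{Tot}(\overline{\eta})$ already commutes).

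The only real obstacle is notational bookkeeping: tracking the bicomplex indexing and the sign conventions in the totalization consistently. No deeper categorical input is needed; the content is entirely that componentwise naturality in the underlying category propagates to the chain-complex and bicomplex levels, and that $\operatorname{Tot}$ preserves natural isomorphisms.
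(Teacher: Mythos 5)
The paper states this proposition without proof, labeling it a ``standard homological algebra fact,'' so there is no paper proof to compare against. Your argument is the expected standard verification and it is correct: defining $\overline{\eta}$ degreewise (or entrywise on the bicomplex) by $\eta$, checking that the naturality squares of $\eta$ applied to the differentials give compatibility with the differentials of the induced complexes, observing that a degreewise isomorphism of chain complexes is an isomorphism, and then noting that $\operatorname{Tot}$ is additive and sends entrywise isomorphisms of bicomplexes to isomorphisms of total complexes. This fills in exactly the routine checks the authors chose to omit.
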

	
	In particular, the isomorphisms given in Theorem \ref{7.5} and Theorem \ref{bouciso} extend to chain complex isomorphisms. 
 
    First, we wish to reduce to the case that $A$ and $B$ are blocks. In order to do this, we will establish an analogue of Proposition~\ref{prop:breaking up a p-perm equiv} for splendid Rickard equivalences. The main tool is the following result. 

    \begin{lemma}\label{lem: orbit determined by A Brauer pair} Suppose that $C$ is a splendid Rickard equivalence between sums of blocks $A$ and $B$ and that $(P,e)$ is an $A$-Brauer pair. Let $\Lambda_B \subseteq \Lambda$ be the set of pairs $(\phi,(Q,f))$ where $(Q,f)$ is a $B$-Brauer pair and $\phi:Q \to P$ is a group isomorphism. There exists a unique $H$-orbit of pairs $(\phi,(Q,f)) \in \Lambda_B$ with the property that $eC(\Delta(P,\phi,Q))f$ is not contractible as a complex of $(k[C_G(P)]e, k[C_H(Q)]f)$-bimodules.

    \begin{proof}
        Since $C$ is a splendid Rickard equivalence, we have a homotopy equivalence $C \otimes_{B} C^* \simeq A$ of complexes of $(A,A)$-bimodules. We apply the functor $-(\Delta(P),e\otimes e^*)$ to this equation. Using Proposition~\ref{brauerconstructionforbrauerpairsprops}, Theorem~\ref{7.5}(d), and Proposition~\ref{brauercommuteswithdualsandconj}, we obtain an isomorphism
        	\begin{align*}
        		k[C_G(P)]e & \cong A(\Delta(P),e\otimes e^*)\\
        		&  \simeq \left( C \otimes_{kH} C^* \right)(\Delta(P),e\otimes e^*)\\
        		& \cong \bigoplus_{(\phi, (Q,f)) \in \widetilde\Lambda} e C(\Delta(P,\phi,Q))f \otimes_{k[C_H(Q)]} f C^*(\Delta(Q,\phi^{-1},P))e\\
        		& \cong \bigoplus_{(\phi, (Q,f)) \in \widetilde\Lambda \cap \Lambda_B} e C(\Delta(P,\phi,Q))f \otimes_{k[C_H(Q)]}  (eC(\Delta(P,\phi,Q))f)^*.
        	\end{align*}
         in the homotopy category of complexes of $(k[C_G(P)]e,k[C_G(P)]e)$-bimodules. The change in index is justified since $e C (P,\phi,Q) f = 0$ whenever $(Q,f)$ is not a $B$-Brauer pair. Since $k[C_G(P)]e$ is an indecomposable bimodule, the corresponding complex concentrated in degree zero is indecomposable when viewed as an object of the homotopy category. Using the Krull-Schmidt theorem, we conclude that there is a unique pair $(\phi, (Q,f)) \in \widetilde\Lambda \cap \Lambda_B$ such that 
         \[
         e C(\Delta(P,\phi,Q))f \otimes_{k[C_H(Q)]} (eC(\Delta(P,\phi,Q))f)^* \simeq  k[C_G(P)]e.
         \]
         It follows from Lemma~\ref{lem:contractible tensor product of complexes} that $e C(\Delta(P,\phi,Q))f \not\simeq 0$ as a complex of $(k[C_G(P)]e, k[C_H(Q)]f)$-bimodules. This proves the existence part of the statement. In order to prove uniqueness, suppose that $(\phi', (Q',f')) \in \widetilde\Lambda \cap \Lambda_B$ is any other pair. Set $C': = e C(\Delta(P,\phi',Q'))f'$ so that we have $C' \otimes_{k[C_H(Q')]} C'^* \simeq 0$. Thus, $C'$ is a complex of $(k[C_G(P)]e,k[C_H(Q')]f')$-bimodules satisfying the hypothesis of Lemma~\ref{lem:contractible tensor product of complexes}. From this, it follows that $e C(\Delta(P,\phi',Q'))f' \simeq 0$, as desired. 
        \end{proof} 
        \end{lemma}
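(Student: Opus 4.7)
My plan is to mimic the proof of Proposition~\ref{uniqueness} (the analogous uniqueness statement for $p$-permutation equivalences), but with chain-level homotopy equivalences replacing equalities in the trivial source ring. The basic idea is to apply the Brauer construction at the pair $(\Delta(P),e\otimes e^*)$ to the homotopy equivalence $C \otimes_B C^* \simeq A$, then read off the decomposition via Theorem~\ref{7.5}(d) and invoke Krull--Schmidt in the homotopy category.

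More concretely, I would first observe that since $(P,e)$ is an $A$-Brauer pair, the block algebra $k[C_G(P)]e$ is a well-defined indecomposable $(k[C_G(P)]e, k[C_G(P)]e)$-bimodule, and there is an isomorphism $A(\Delta(P), e\otimes e^*) \cong k[C_G(P)]e$ coming from the proposition on Brauer constructions of cut group algebras. Applying $-(\Delta(P), e\otimes e^*)$ to $C \otimes_B C^* \simeq A$, and using Theorem~\ref{7.5}(d) (extended to chain complexes via Proposition~\ref{prop:functors extended to chain complexes}) together with the duality identity from Proposition~\ref{brauercommuteswithdualsandconj}, one obtains a homotopy equivalence
\[
k[C_G(P)]e \;\simeq\; \bigoplus_{(\phi,(Q,f)) \in \widetilde{\Lambda} \cap \Lambda_B}\; eC(\Delta(P,\phi,Q))f \otimes_{k[C_H(Q)]} \bigl(eC(\Delta(P,\phi,Q))f\bigr)^*
\]
in $K^b({}_{k[C_G(P)]e}\mathbf{mod}_{k[C_G(P)]e})$. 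The restriction of the index to $\widetilde{\Lambda}\cap\Lambda_B$ is immediate because $f \cdot \br_Q(e_B) = 0$ whenever $(Q,f)$ fails to be a $B$-Brauer pair, so $eC(\Delta(P,\phi,Q))f$ vanishes.

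Now I would invoke Krull--Schmidt in the homotopy category $K^b({}_{k[C_G(P)]e}\mathbf{mod}_{k[C_G(P)]e})$. Since the left-hand side is an indecomposable bimodule (because $k[C_G(P)]e$ is a block algebra), exactly one summand on the right must be homotopy equivalent to $k[C_G(P)]e$ and all remaining summands must be contractible. For the distinguished summand, Lemma~\ref{lem:contractible tensor product of complexes} shows that $eC(\Delta(P,\phi,Q))f \not\simeq 0$, giving existence of an $H$-orbit in $\Lambda_B$ with the desired nonvanishing. For any other representative $(\phi',(Q',f'))$, the same lemma applied in the contrapositive direction to $eC(\Delta(P,\phi',Q'))f' \otimes (-)^* \simeq 0$ forces $eC(\Delta(P,\phi',Q'))f' \simeq 0$, yielding uniqueness.

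The main obstacle I anticipate is bookkeeping rather than a genuine difficulty: one needs to verify that the natural isomorphism in Theorem~\ref{7.5}(d), originally stated for bimodules, lifts cleanly to a homotopy equivalence of total complexes after applying the termwise Brauer construction, and that the termwise action of $(-)^*$ commutes appropriately with passing to chain complexes. Both of these are handled by Proposition~\ref{prop:functors extended to chain complexes} applied to the additive (bi)functors in question, but they must be invoked carefully so that the Krull--Schmidt argument in the homotopy category is justified. Once that is in place, the argument reduces to exactly the block-algebra indecomposability input and the contractibility criterion from Lemma~\ref{lem:contractible tensor product of complexes}.
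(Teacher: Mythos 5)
Your proposal follows exactly the same strategy as the paper's proof: apply the Brauer construction $-(\Delta(P), e\otimes e^*)$ to $C\otimes_B C^* \simeq A$, decompose via Theorem~\ref{7.5}(d) lifted to chain complexes, invoke Krull--Schmidt in the homotopy category using the indecomposability of the block $k[C_G(P)]e$, and apply Lemma~\ref{lem:contractible tensor product of complexes} in both directions for existence and uniqueness. The bookkeeping concerns you raise about lifting the bimodule-level isomorphisms to homotopy equivalences of total complexes are handled exactly as you anticipate, via Proposition~\ref{prop:functors extended to chain complexes}.
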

 
    The next result shows that a splendid Rickard equivalence $C$ determines a bijection between the block direct summands of $A$ and $B$. Moreover, $C$ is a direct sum (in the homotopy category) of splendid Rickard equivalences for each pair of corresponding blocks and the set of Brauer pairs for $C$ breaks up as the disjoint union of the sets of Brauer pairs for each summand. 

        \begin{theorem}\label{thm:breaking up a splendid complex}Suppose $A$ and $B$ are sums of blocks and that $C$ is a splendid Rickard equivalence for $A$ and $B$. Let $f:\mathcal{I}\to\mathcal{J}$ be the bijection determined by $\gamma := \Lambda(C)$ from Proposition~\ref{prop:breaking up a p-perm equiv}. 
    \begin{enumerate}
        \item For every $e\in\mathcal{I}$, the element $f_e\in \mathcal{J}$ is unique with the property that $eCf_e\not\simeq 0$.
        
        \item We have $C= \bigoplus_{e\in \mathcal{I}}eCf_e$ in $K^b({}_{A}\mathbf{mod}_B)$. Moreover, for every $e \in \mathcal{I}$, $eCf_e$ is a splendid Rickard equivalence for $kGe$ and $kGf_e$. 
        
        \item For every $e \in \mathcal{I}$, the $eCf_e$-Brauer pairs are precisely the $C$-Brauer pairs which lie above $(1,e\otimes f_e^*)$. We have $\mathcal{BP}(C) = \coprod_{e\in \mathcal{I}}\mathcal{BP}(eCf_e)$. 
    \end{enumerate} 
    \begin{proof}The Lefchetz invariant of $eCf_e$ is $e\gamma f_e$, which is nonzero by Proposition~\ref{prop:breaking up a p-perm equiv}(a). Therefore, $eCf_e \not\simeq 0$. By applying Lemma~\ref{lem: orbit determined by A Brauer pair} to the splendid Rickard equivalence $C$ and the $A$-Brauer pair $(1,e)$, we obtain that there exists a unique $f \in \mathcal{J}$ such that $eCf \not\simeq 0$. By uniqueness, $f = f_e$. This proves (a). 
    
    Using (a), we have a direct sum decomposition
    \[
    C = \bigoplus_{e\in \mathcal{I}}eCf_e
    \]
    in the homotopy category of chain complexes of $(A,A)$-bimodules. Since $kGe$ and $kHf_e$ are blocks, it suffices by Proposition~\ref{prop:src left implies right} to show just one of the homotopy equivalences. Multiplying the above equation by $e$ yields $eC = eCf_e$. Thus, we obtain
    \begin{align*}
        kGe = eAe & \simeq e(C\otimes_B C^*)e = eCf_e \otimes_{kGf_e} (eCf_e)^*.
    \end{align*}
    This completes the proof of (b). Finally, let $\lambda =(\Delta(P,\phi,Q),b\otimes c^*)$ be a $k[G\times H]$-Brauer pair. Then using Lemma~\ref{lem:braur construction for iM}, we have
    \[
    (eCf_e)(\lambda)  = b(eCf_e)(\Delta(P,\phi,Q))c= b\Br_P(e)C(\Delta(P,\phi,Q))\Br_Q(f_e)c.
    \]
    As in the proof of Proposition~\ref{prop:breaking up a p-perm equiv}, it is easily deduced from this equation that $\lambda$ is a $eCf_e$-Brauer pair if and only if $\lambda$ is a $C$-Brauer pair satisfying $(1,e\otimes f_e^*)\leq \lambda$. Since every $C$-Brauer pair of $k[G\times H]$ lies above a unique block, we conclude that the union is disjoint. 
    \end{proof}
        
    \end{theorem}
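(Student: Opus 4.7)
The plan is to imitate the proof of Proposition~\ref{prop:breaking up a p-perm equiv}, with ``nonzero in $T$'' replaced everywhere by ``noncontractible in $K^b$''; the substitute for arithmetic in the trivial source ring is Lemma~\ref{lem: orbit determined by A Brauer pair}, and the key supporting tools are Lemma~\ref{lem:contractible tensor product of complexes}, Proposition~\ref{prop:src left implies right}, and Lemma~\ref{lem:braur construction for iM}.

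For part (a), I would specialize Lemma~\ref{lem: orbit determined by A Brauer pair} to the trivial $A$-Brauer pair $(\{1\}, e)$. Because $P = \{1\}$ forces $Q = \{1\}$ and $\phi = \mathrm{id}$, the set $\Lambda_B$ reduces to the block idempotents of $B$, and the lemma produces a unique block $f$ with $eCf \not\simeq 0$. On the other hand, $\Lambda(eCf_e) = e\gamma f_e \neq 0$ by Proposition~\ref{prop:breaking up a p-perm equiv}(a), so $eCf_e$ is noncontractible; by uniqueness the block produced by the lemma must be $f_e$.

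For part (b), the termwise identity $C = \bigoplus_{e,f} eCf$ together with part (a) immediately gives the decomposition $C \simeq \bigoplus_{e \in \mathcal{I}} eCf_e$ in $K^b({}_A\mathbf{mod}_B)$, since the off-diagonal summands are contractible and hence zero objects. Each summand $eCf_e$ inherits the twisted-diagonal vertex condition termwise from $C$. Because $kGe$ and $kHf_e$ are single blocks, Proposition~\ref{prop:src left implies right} allows me to verify the splendid Rickard property via only one homotopy equivalence: multiplying $C \otimes_B C^* \simeq A$ on both sides by $e$, substituting the decomposition from (a) into $eC$ and $C^*e$, and using orthogonality of the $f_{e'}$'s in $kH$ to collapse the cross terms, yields $eCf_e \otimes_{kHf_e} (eCf_e)^* \simeq eAe = kGe$ in $K^b({}_{kGe}\mathbf{mod}_{kGe})$.

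For part (c), I would apply Lemma~\ref{lem:braur construction for iM} twice (once on each side) to the cut complex $eCf_e$ at a candidate Brauer pair $\lambda = (\Delta(P,\phi,Q), b \otimes c^*)$, obtaining $(eCf_e)(\lambda) = b\,\Br_P(e)\,C(\Delta(P,\phi,Q))\,\Br_Q(f_e)\,c$. The factors $b\,\Br_P(e)$ and $\Br_Q(f_e)\,c$ equal $b$ and $c$ respectively when $(\{1\}, e) \leq (P, b)$ and $(\{1\}, f_e) \leq (Q, c)$, and vanish otherwise. Hence $(eCf_e)(\lambda)$ equals $C(\lambda)$ if $(\{1\}, e \otimes f_e^*) \leq \lambda$ and is zero otherwise, giving the identification $\mathcal{BP}(eCf_e) = \{\lambda \in \mathcal{BP}(C) : (\{1\}, e \otimes f_e^*) \leq \lambda\}$. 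Disjointness of $\coprod_e \mathcal{BP}(eCf_e) = \mathcal{BP}(C)$ then follows because every $k[G\times H]$-Brauer pair sits above a unique block of $A \otimes B^*$. The main obstacle to be aware of, and the reason a direct analogue of the $T$-ring proof does not suffice, is that a noncontractible complex can have vanishing Lefschetz invariant; Lemma~\ref{lem: orbit determined by A Brauer pair} is the essential workaround, letting the uniqueness in (a) be established entirely at the chain-complex level.
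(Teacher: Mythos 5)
Your proposal is correct and follows essentially the same route as the paper: part (a) by specializing Lemma~\ref{lem: orbit determined by A Brauer pair} to the trivial Brauer pair $(\{1\},e)$ and combining with $\Lambda(eCf_e)=e\gamma f_e\neq 0$; part (b) by multiplying $C\otimes_B C^*\simeq A$ by $e$ on both sides and invoking Proposition~\ref{prop:src left implies right}; part (c) via Lemma~\ref{lem:braur construction for iM} and uniqueness of the block of $k[G\times H]$ that a given Brauer pair lies above. There are no substantive differences from the paper's argument.
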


    Therefore, it suffices to consider only splendid Rickard equivalences for block algebras. We remark that part (a) similarly holds for derived Rickard equivalences and stable equivalences of Morita type. Indeed, one may verify that a chain complex of bimodules inducing a derived equivalence between sums of block algebras decomposes into a direct sum of indecomposable derived Rickard complexes inducing equivalences of block algebras, and similarly for stable equivalences of Morita type.

    From here, assume $A$ and $B$ are block algebras. We next prove a chain complex-theoretic version of Theorem \ref{localpperms}.

	\begin{theorem}\label{thm:localequivalences}
		Suppose that $A$ and $B$ are blocks and $C$ is a splendid Rickard equivalence for $A$ and $B$, and set $\gamma := \Lambda(C)$. Set $\lambda := (\Delta(P,\phi, Q), e\otimes f^*)$ be a $\gamma$-Brauer pair and set $I := N_G(P,e), J := N_H(Q,f)$. Suppose $C_G(P) \leq S \leq I$ and $C_H(Q) \leq T \leq J$ are intermediate groups related via the isomorphism $I/C_G(P) \to J/C_H(Q)$ defined in Proposition \ref{prop:iso determined by a brauer pair}, and set $Y := N_{S\times H}(\Delta(P,\phi,Q),e\otimes f^*)$. Then the chain complex $C(\lambda) := eC(\Delta(P,\phi, Q))f \in \mathrm{Ch}^b({}_{kY(e\otimes f^*)}\triv)$ satisfies 
		\[
            k[C_G(P)]e \simeq C(\lambda) \mathop{\otimes}\limits_{kH}^{Y, Y^\circ} C(\lambda)^* \text{ in } K^b({}_{k[N_{S\times S}(\Delta(P))](e\otimes e^*)}\triv)
         \]
         and
          \[
          k[C_H(Q)]f \simeq C(\lambda)^* \mathop{\otimes}\limits_{kG}^{Y^\circ, Y} C(\lambda) \text{ in } K^b({}_{k[N_{T\times T}(\Delta(Q))](f\otimes f^*)}\triv).
          \] 
   In particular, $C(\lambda)$ is a splendid Rickard equivalence for $k[C_G(P)]e$ and $k[C_H(Q)]f$, and the chain complex \[\Ind_{Y}^{S\times T}(C(\lambda)) \in \mathrm{Ch}^b({}_{kSe}\triv_{kTf})\] is a splendid Rickard equivalence for $kSe$ and $kTf$. 
	\end{theorem}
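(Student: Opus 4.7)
The plan is to lift the proof of Theorem~\ref{localpperms} to the chain-complex level. The main tools are the chain-complex versions of Theorem~\ref{7.5} and Bouc's isomorphism (Theorem~\ref{bouciso}), justified by Proposition~\ref{prop:functors extended to chain complexes}, together with Krull-Schmidt in the bounded homotopy category and the uniqueness statement of Lemma~\ref{lem: orbit determined by A Brauer pair}.

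For the first local equivalence, I would apply $e((-)(\Delta(P)))e$ to $C\otimes_B C^* \simeq A$. Using $A(\Delta(P),e\otimes e^*) \cong k[C_G(P)]e$ and the chain-complex extension of Theorem~\ref{7.5}(d), this produces, in $K^b({}_{k[N_{S\times S}(\Delta(P))](e\otimes e^*)}\mathbf{triv})$, a homotopy equivalence
\[
k[C_G(P)]e \simeq \bigoplus_{\lambda' \in \widehat\Lambda}\Ind^{N_{S\times S}(\Delta(P))}_{\Delta(I(\lambda'))(C_G(P)\times 1)}\bigl(C(\lambda')\otimes^{X(\lambda'),Y(\lambda')}_{kH} C^*(\lambda')\bigr).
\]
Since $k[C_G(P)]e$ is indecomposable in this category (already indecomposable as a bimodule), Krull-Schmidt forces exactly one summand to be non-contractible; Lemma~\ref{lem: orbit determined by A Brauer pair} identifies this with the $S\times H$-orbit containing our $\lambda$. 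The compatibility of $S$ with the isomorphism in Proposition~\ref{prop:iso determined by a brauer pair}, combined with Lemma~\ref{prop:proj&kernelsforspecialsubgroups}(c), yields $I(\lambda) = p_1(X(\lambda)) = S$, so $\Delta(I(\lambda))(C_G(P)\times 1) = N_{S\times S}(\Delta(P))$ by Lemma~\ref{prop:proj&kernelsforspecialsubgroups}(a); the induction in the $\lambda$-summand collapses and one reads off $C(\lambda)\otimes^{Y,Y^\circ}_{kH}C(\lambda)^* \simeq k[C_G(P)]e$. The symmetric argument applied to $C^*\otimes_A C \simeq B$ at $(\Delta(Q),f\otimes f^*)$ yields the second local equivalence.

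The $p$-permutation and twisted-diagonal structure of $C(\lambda)$ is automatic: the Brauer construction and idempotent truncation preserve the $p$-permutation property, and by the Brauer-construction propositions in Section~3 the indecomposable summands of $C(\lambda)$ have twisted diagonal vertices as $k[N_{G\times H}(\Delta(P,\phi,Q))]$-modules; restricting to $C_G(P)\times C_H(Q)$ can only pass to subgroups of twisted diagonals, which are still twisted diagonal. Combined with the two local equivalences, $C(\lambda)$ is a splendid Rickard complex for the block algebras $k[C_G(P)]e$ and $k[C_H(Q)]f$.

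For the induced complex $D := \Ind_Y^{S\times T} C(\lambda)$, I would compute $D\otimes_{kT}D^*$ using the chain-complex version of Bouc's formula. The compatibility of $T$ gives $p_2(Y) = T = p_1(Y^\circ)$, so the double-coset index set is a single point, and $Y\ast Y^\circ = \Delta(S)(C_G(P)\times 1) = N_{S\times S}(\Delta(P))$ by Lemma~\ref{lem:composing subgroups}(a) and Lemma~\ref{prop:proj&kernelsforspecialsubgroups}(a). Substituting the first local equivalence gives
\[
D\otimes_{kT}D^* \simeq \Ind^{S\times S}_{N_{S\times S}(\Delta(P))}\bigl(k[C_G(P)]e\bigr) \cong kSe,
\]
the last isomorphism via the explicit bimodule map $(s_1,s_2)\otimes ce \mapsto s_1 c e s_2^{-1}$, which is well-defined, surjective, and a bijection by the dimension count $[S\times S:N_{S\times S}(\Delta(P))]\cdot|C_G(P)| = |S|$. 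Symmetrically, $D^*\otimes_{kS} D \simeq kTf$, so $D$ is a splendid Rickard complex for $kSe$ and $kTf$. The hard part of the argument is not conceptually difficult but notationally delicate: it is the bookkeeping of enriched module structures (bimodule vs.\ $N_{S\times S}(\Delta(P))$-module vs.\ induced) as the natural isomorphisms of Theorem~\ref{7.5} and Bouc's formula are transported up to $K^b$ and combined with the Krull-Schmidt selection.
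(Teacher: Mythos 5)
Your proposal follows the paper's proof essentially step for step: apply the Brauer construction at $\Delta(P)$ to $C\otimes_B C^*\simeq A$, pass to the chain-complex version of Theorem~\ref{7.5}(d) via Proposition~\ref{prop:functors extended to chain complexes}, use Krull--Schmidt in $K^b$ to isolate the unique non-contractible summand, compute $I(\lambda)=N_{(S,\phi,J)}=S$ so the induction in that summand collapses, and then deduce the statement for $\Ind_Y^{S\times T}C(\lambda)$ from Bouc's formula with $p_2(Y)=T=p_1(Y^\circ)$ killing the double-coset sum. The one place you phrase things a little differently is in identifying \emph{which} summand is non-contractible: you appeal to Lemma~\ref{lem: orbit determined by A Brauer pair}, but that lemma is an existence-and-uniqueness statement for $H$-orbits and does not by itself single out $\lambda$'s orbit; the identification still needs the observation that $\lambda\in\calB\calP(\gamma)$ forces $\gamma(\lambda)\neq 0$, hence $C(\lambda)\not\simeq 0$ (the paper derives non-contractibility of $C(\lambda)\otimes^{Y,Y^\circ}_{kH}C(\lambda)^*$ directly from Theorem~\ref{localpperms} via the Lefschetz invariant). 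With that one sentence added, your argument is complete and matches the paper's.
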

	
	\begin{proof}
		We show that $k[C_G(P)]e \simeq C'\mathop{\otimes}\limits_{kH}^{Y,Y^\circ}C'^*$, and the dual homotopy equivalence follows similarly. 
  
        First, observe that the Lefchetz invariant of the chain complex $C(\lambda) \mathop{\otimes}\limits_{kH}^{Y, Y^\circ} C(\lambda)^*$ is equal to $\gamma(\lambda) \mathop{\cdot}\limits_{H}^{Y, Y^\circ} \gamma(\lambda)^*$. Since $\lambda \in \mathcal{BP}(\gamma)$, Theorem \ref{localpperms} implies that $\gamma(\lambda) \mathop{\cdot}\limits_{H}^{Y, Y^\circ} \gamma(\lambda)^*$ is nonzero. This implies that $C(\lambda) \mathop{\otimes}\limits_{kH}^{Y, Y^\circ} C(\lambda)^*$ is not contractible as a complex of $k[N_{S\times S}(\Delta(P)](e\otimes e^*)$-modules. 
        
        Next, we apply the Brauer construction at $\Delta(P)\leq G\times G $ to the homotopy equivalence of $(A,A)$-bimodules $C\otimes_{B}C^* \simeq A$. Using Lemma~\ref{lem:braur construction for iM}, we obtain a homotopy equivalence 
		\[kC_G(P)e \simeq e[C\otimes_BC^*](\Delta(P))e,\] 
		 of chain complexes of $k[N_{S\times S}(\Delta(P))](e\otimes e^*)$-modules. Using the notation in Theorem \ref{7.5}(d), choose a set of $S\times H$ orbit representatives $\widehat{\Lambda}\subseteq \Lambda$ which contains $(\phi,(Q,f))$. Using Theorem \ref{7.5} together with Proposition \ref{prop:functors extended to chain complexes} we obtain a homotopy equivalence  
		\begin{align*}k[C_G(P)]e & \simeq e(C\otimes_B C^*)(\Delta(P))e\\
        % & \cong \bigoplus_{ (\psi,(R,b)) \in \hat{\Lambda}} eC(\Delta(P,\psi, R))b\mathop{\otimes}\limits_{H}^{Y, Y^\circ} bC^*(\Delta(R,\psi\inv, P))e\\
        & \cong \bigoplus_{\tau = (\psi, (R,b))\in \widehat{\Lambda}} \Ind^{N_{S\times S}(\Delta(P))}_{\Delta(I(\tau))(C_G(P)\times 1)} (C(\tau) \mathop{\otimes}\limits_{kH}^{X(\tau),X(\tau)^\circ} C(\tau)^*)
        \end{align*} 
        of chain complexes of $k[N_{S\times S}(\Delta(P))](e\otimes e^*)$-modules. Notice that when $\tau = (\phi,(Q,f))$, we have $X(\tau) = Y$, $C(\tau) = C(\lambda)$, and, by Proposition \ref{prop:iso determined by a brauer pair}, $I(\tau):= N_{(S, \phi, J)} = S$. It follows from the latter that
        %Note that $X(\lambda) = Y$ and $\Delta(I(\lambda))(C_G(P)\times 1) = Y\ast Y^\circ$. A quick calculation shows that that $Y = N_{S\times J}(\Delta(P,\phi,Q))$. Using Lemma \ref{} together with the fact that $S$ and $T$ correspond under the isomorphism $I/C_G(P) \to J/C_H(Q)$, we see that $p_1(Y) = N_{(S,\phi,J)} = N_{(S,\phi,T)} = S$ and $p_2(Y) = N_{(J,\phi^{-1},S)} = N_{(T,\phi^{-1},S)} = T$. Now, Lemma \ref{} implies that 
         \[
           \Delta(I(\tau))(C_G(P)\times 1)= \Delta(S)(C_G(P)\times 1)= N_{S\times S}(\Delta(P)).
        \]
          Thus, $C(\lambda) \otimes_{H}^{Y, Y^\circ} C(\lambda)^*$ is a noncontractible direct summand of the complex on the right hand side.
	   %where as in \ref{7.5}, $\Lambda$ consists of pairs $(\psi,(R,b))$, with $\psi: R\to P$ is an isomorphism, $(R,b)$ a $B$-Brauer pair, and with $S \times H$-action given by ${}^{(g,h)}(\phi, (R,b)) = (c_g\phi c_h\inv, {}^h(R,b))$, for $(g,h) \in S\times H$. $\widehat{\Lambda} \subseteq \Lambda$ denotes a set of $S \times H$-orbit representatives. 
		On the other hand, $k[C_G(P)]e$ is an indecomposable $k[N_{S\times S}(\Delta(P))]$-bimodule, hence indecomposable and noncontractible as a chain complex. By the Krull-Schmidt theorem, precisely one of the terms in the direct sum must be noncontractible. We conclude that $k[C_G(P)]e \simeq C(\lambda) \otimes_{H}^{Y, Y^\circ} C(\lambda)^*$.
        %In particular, only one of the summands can have a nonzero Lefschetz invariant. By the theory of $\gamma$-Brauer pairs, this occurs by definition if and only if $(\psi,(R,b)) \in \widehat{\Lambda}$ corresponds to a $\gamma$-Brauer pair, i.e. $e\gamma(\Delta(P,\psi,R))b \neq 0$. Moreover, such an $\omega \in \hat{\Lambda}$ must occur since if otherwise, the Lefschetz invariant of the chain complex on the right-hand side would be zero, a contradiction. For the $\gamma$-Brauer pair for which this occurs, by \ref{uniqueness}, we can assume $(\psi, (R,b)) = (\phi, (Q,f))$, hence $(\Delta(P, \psi, R), e\otimes b^*) = \lambda$. Therefore, we have 
		%\[k[C_G(P)]e \simeq eC(\Delta(P,\psi, R))b\otimes_{H}^{Y, Y^\circ} bC^*(\Delta(R,\psi\inv, P))e = C(\lambda)\otimes_H^{Y,Y^\circ} C^*(\lambda^\circ) =C(\lambda)\otimes_H^{Y,Y^\circ} C(\lambda)^*, \] 
		%and for all other pairs $(\phi, (Q,f)) \neq \omega' \in \widehat{\Lambda}$, $0 \simeq C(\omega')\otimes_H^{Y,Y^\circ} C^*(\omega')$. The first homotopy equivalence follows. 
		
		Therefore, \[\Ind^{S\times S}_{N_{S\times S}(\Delta(P))}(k[C_G(P)]e) \simeq   \Ind^{S\times S}_{N_{S\times S}(\Delta(P))}(C(\lambda)\otimes_{kH}^{Y,Y^\circ} C^*(\lambda^\circ))\] 
		Observe $kC_G(P)$ is a transitive $k[N_{S\times S}(\Delta(P))]$-permutation module with $\Delta(S)$ stabilizing $1 \in C_G(P)$, hence 
		\[kC_G(P) \cong \Ind_{\Delta(S)}^{N_{S\times S}(\Delta(P))}(k),\] 
		and therefore 
		\[\Ind^{S\times S}_{N_{S\times S}(\Delta(P))} (k[C_G(P)]e)\cong e( \Ind^{S\times S}_{N_{S\times S}(\Delta(P))} (kC_G(P)))e \cong e\Ind^{S\times S}_{\Delta(S)}(k) e \cong kS e.\] 
		So we have 
		\[
             \Ind^{S\times S}_{N_{S\times S}(\Delta(P))} (C(\lambda) \otimes_{kH}^{Y,Y^\circ}C^*(\lambda^\circ))\simeq\Ind^{S\times S}_{N_{S\times S}(\Delta(P))} (k[C_G(P)]e)\cong kSe.
             \] 
		We show that the left-hand side is isomorphic to 
        \[
        \Ind_Y^{S\times T}(C(\lambda)) \otimes_{kT} (\Ind^{S\times T}_{Y} C(\lambda))^*.
        \] 
	   First, notice that $Y= N_{S\times J}(\Delta(P,\phi,Q))$ using the definitions. Then Lemma \ref{prop:proj&kernelsforspecialsubgroups}(c) implies that $p_1(Y) = N_{(S,\phi, J)}$ and $p_2(Y) = N_{(J,\phi^{-1}, S)}$. Since $\lambda \in \mathcal{BP}(\gamma)$ and $S$ and $T$ are related under the isomorphism $I/C_G(P)\to J/C_G(Q)$, Proposition \ref{prop:iso determined by a brauer pair} implies that $p_1(Y) = N_{(S,\phi, J)} = S$ and $p_2(Y) = N_{(J,\phi^{-1}, S)} = T$. In particular, $Y\leq S\times T $ so the claim makes sense. 
        
        Next, we have the identifications \[(\Ind_Y^{S\times T} C(\lambda))^* = (\Ind_Y^{S\times T}(eC(P,\phi, Q)f))^* \cong \Ind_{Y^\circ}^{T\times S}(fC^*(Q, \phi\inv, P) e).\] 
		Then applying Theorem \ref{bouciso}, we have
		\[\Ind^{S\times T}_{Y}(C(\lambda))\otimes_{kT} \Ind^{T\times S}_{Y^{o}}(C^*(\lambda^\circ)) \cong \bigoplus_{t \in [p_2(Y)\backslash T/p_1(Y^\circ)]} \Ind^{S\times S}_{Y\ast {}^{(t,1)}Y^\circ} \big(eC(P,\phi, Q)f\otimes_{kT}^{Y, {}^{(t,1)}Y} fC^*(Q,\phi\inv, P)e\big).\] 
		However, $p_2(Y) = p_1(Y^\circ) = T$, therefore the indexing set in the direct sum has cardinality one, and the corresponding term is $\Ind^{S\times S}_{Y} \big( C(\lambda) \otimes_{kT}^{Y, Y^\circ} C^*(\lambda^\circ)\big)$. We conclude that \[kSe \simeq \Ind^{S\times S}_{Y}\left( C(\lambda) \otimes_{kT}^{Y, Y^\circ} C^*(\lambda^\circ)\right) \cong \Ind^{S\times T}_Y(C(\lambda)) \otimes_{kT} (\Ind_Y^{S\times T}(C(\lambda)))^*  .\] 
		The second homotopy equivalence follows similarly, and therefore $\Ind^{S\times T}_{Y}(C(\lambda))$ is a splendid Rickard equivalence for $kSe$ and $kTf$. 
	\end{proof}
	
	We next show that in addition, if $\omega := (\Delta(P, \phi, Q), e\otimes f^*)$ is not a $\gamma$-Brauer pair, then $C(\omega) \simeq 0$ as a complex of $k[N_{G\times H}(\omega)]$-modules. First, we make some easy observations. These are well-known, but we include proofs for convenience. 
	
	\begin{lemma}\label{lem:contractible induction}
		\begin{enumerate}
			\item Let $C$ be a complex of $(A,B)$-bimodules and let $C'$ be a complex of $(B,B')$-bimodules for some $k$-algebras $A,B,B'$. If $C^* \otimes_A C \simeq B$ and $C\otimes_BC'\simeq 0$, then $C' \simeq 0$. 
			\item Let $H\leq G$ and let $C$ be a chain complex of $kH$-modules. Then $\Ind^G_H C \simeq 0$ if and only if $C \simeq 0$. 
		\end{enumerate}
	\end{lemma}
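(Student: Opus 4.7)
For part (a), the plan is to use associativity of the tensor product together with the fact that tensoring a contractible complex (on either side) with any complex yields a contractible complex. Starting from the trivial identification $C' \cong B \otimes_B C'$, we substitute the hypothesized homotopy equivalence $C^* \otimes_A C \simeq B$ to obtain
\[
C' \cong B \otimes_B C' \simeq (C^* \otimes_A C) \otimes_B C' \cong C^* \otimes_A (C \otimes_B C').
\]
By hypothesis, $C \otimes_B C' \simeq 0$, and tensoring a contractible complex with $C^*$ over $A$ yields a contractible complex (the contracting homotopy $s$ on $C\otimes_B C'$ gives rise to the contracting homotopy $\id_{C^*} \otimes s$). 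Therefore $C' \simeq 0$. The only subtlety is justifying that each of the isomorphisms and homotopy equivalences above is valid as $(A, B')$-bimodule complex maps; this follows from the standard fact that chain homotopies are preserved by tensor products, applied termwise to the original homotopy equivalence $C^* \otimes_A C \simeq B$.

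For part (b), the reverse implication is trivial: if $s$ is a contracting homotopy for $C$, then $\id_{kG} \otimes_{kH} s$ is a contracting homotopy for $\Ind^G_H C$. For the forward direction, the approach is to exhibit $C$ as a direct summand of $\Res^G_H \Ind^G_H C$ and then use the fact that direct summands and restrictions of contractible complexes are contractible. Writing $kG = \bigoplus_{g \in [G/H]} g \cdot kH$ as right $kH$-modules, we have a $kH$-module decomposition
\[
\Res^G_H \Ind^G_H C \;=\; \bigoplus_{g \in [H\backslash G / H]} \Ind^H_{H \cap {}^g H}\bigl({}^g(\Res^H_{H \cap H^g} C)\bigr),
\]
which is the Mackey formula; it is natural in $C$ and hence extends termwise to chain complexes. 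The summand corresponding to $g = 1$ is isomorphic to $C$ itself, so $C$ is a direct summand of $\Res^G_H \Ind^G_H C$ in $\mathrm{Ch}^b({}_{kH}\mathbf{mod})$.

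If $\Ind^G_H C \simeq 0$, then $\Res^G_H \Ind^G_H C \simeq 0$ as well (restriction is additive and preserves chain homotopies), and consequently its direct summand $C$ is contractible. The main obstacle in this lemma is really just bookkeeping: verifying that associativity and the Mackey decomposition produce chain-complex isomorphisms rather than merely termwise isomorphisms, which is routine since all the constructions involved are additive functors and hence extend to chain complexes.
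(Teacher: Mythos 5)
Your proof is correct and follows essentially the same route as the paper: for (a) you use associativity of tensor products and the fact that tensoring preserves contractibility, and for (b) you apply the Mackey formula extended to chain complexes and observe that $C$ appears as the $g=1$ summand of $\Res^G_H \Ind^G_H C$. Your write-up is actually slightly more careful than the paper's in spelling out the contracting homotopy $\id\otimes s$ and in retaining the conjugation twist ${}^g(-)$ in the Mackey decomposition (which the paper suppresses, harmlessly, since only the trivial-coset summand is used).
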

	\begin{proof}
		\begin{enumerate}
			\item We have \[C' \cong B \otimes_B C' \simeq (C^*\otimes_A C) \otimes_B C' = C^*\otimes_A (C \otimes_B C') \simeq C^* \otimes_A 0 \cong 0.\].
			\item The Mackey formula is natural and therefore extends to chain complexes. We have \[ \Res^G_H \Ind^G_H C \cong \bigoplus_{g \in [H\backslash G / H]} \Ind^H_{H\cap {}^gH} \Res^H_{H\cap {}^gH} C.\] Suppose $\Ind^G_HC \simeq 0$. Then the left-hand side is contractible since restriction preserves contractibility. All summands on the right-hand side are contractible as well, since direct summands of contractible chain complexes are again contractible. Since there is a coset representative in $[H\backslash G/H]$ belonging to $H$, $C$ is isomorphic to a direct summand of $\Res^G_H\Ind^G_H C$, and thus is contractible. The reverse direction is clear. 
		\end{enumerate}
	\end{proof}
	
	\begin{theorem}\label{thm:not a braur pair->contractible.}
		Suppose $C$ is a splendid Rickard equivalence for $A$ and $B$, with $\gamma = \Lambda(C)$. 
        %Let $(D,e_D)$ be a maximal Brauer pair of $A$ and $(E,e_E)$ be a maximal Brauer pair of $B$. 
        If $\omega:= (\Delta(R, \psi, Q), d\otimes f^*)\in \mathcal{BP}^\Delta(A,B)$ is not a $\gamma$-Brauer pair, then $C(\omega) := dC(\Delta(R,\psi, Q))f$ is contractible as a chain complex of $k[N_{G\times H}(\Delta(R,\psi, Q), d\otimes f^*)](d\otimes f^*)$-modules. In particular, $\omega$ is not a $C$-Brauer pair. 
	\end{theorem}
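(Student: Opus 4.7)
The plan is to work at the level of $k[N_{S\times S}(\Delta(R))](d \otimes d^*)$-modules, invoking the finer $S \times H$-orbit decomposition of Theorem~\ref{7.5}(d) together with Krull--Schmidt, rather than the bimodule-level decomposition used in Lemma~\ref{lem: orbit determined by A Brauer pair}. First, Theorem~\ref{thm:breaking up a splendid complex}(c) reduces the problem to the case where $A$ and $B$ are blocks. Write $\omega = (\Delta(R,\psi,Q), d\otimes f^*)$ and set $S := N_G(R, d)$. Applying the Brauer construction at $\Delta(R) \leq G \times G$ to the homotopy equivalence $C\otimes_B C^* \simeq A$ and cutting by $d \otimes d^*$, Lemma~\ref{lem:braur construction for iM} identifies the right-hand side with $k[C_G(R)]d$, while Theorem~\ref{7.5}(d) --- extended to chain complexes via Proposition~\ref{prop:functors extended to chain complexes} --- decomposes the left-hand side to yield, in $K^b({}_{k[N_{S\times S}(\Delta(R))](d\otimes d^*)}\mathbf{mod})$,
\[
k[C_G(R)]d \simeq \bigoplus_{\lambda \in \hat\Lambda} \Ind^{N_{S\times S}(\Delta(R))}_{\Delta(I(\lambda))(C_G(R)\times 1)} \bigl(C(\lambda) \mathop{\otimes}\limits_{kH}^{X(\lambda), X(\lambda)^\circ} C(\lambda)^*\bigr),
\]
where $\hat\Lambda$ is a set of $S \times H$-orbit representatives in $\Lambda = \Lambda_H(R)$.

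Since $k[C_G(R)]d$ is indecomposable in this category, Krull--Schmidt forces exactly one $\lambda_0 \in \hat\Lambda$ to give a noncontractible summand. Following the argument in the proof of Theorem~\ref{thm:localequivalences}, $\lambda_0$ must be the $S \times H$-orbit containing the $\gamma$-Brauer pair; moreover, by Proposition~\ref{prop:iso determined by a brauer pair} (which gives $I(\lambda_0) = S$) together with Proposition~\ref{uniqueness}, this $S \times H$-orbit coincides with the unique $H$-orbit of $\gamma$-Brauer pairs. When $\omega$ is not a $\gamma$-Brauer pair, the $S \times H$-invariance of the $\gamma$-Brauer pair property puts $\omega$ in an $S \times H$-orbit distinct from that of $\lambda_0$. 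Choosing $\lambda \in \hat\Lambda$ to represent $\omega$'s orbit, Lemma~\ref{lem:contractible induction}(b) then gives
\[
C(\lambda) \mathop{\otimes}\limits_{kH}^{X(\lambda), X(\lambda)^\circ} C(\lambda)^* \simeq 0 \quad \text{in } K^b({}_{k[X(\lambda) \ast X(\lambda)^\circ]}\mathbf{mod}).
\]

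The final step is to descend this vanishing to $C(\lambda) \simeq 0$ as a complex of $kX(\lambda)(d \otimes f^*)$-modules, after which transport of structure along an $S \times H$-conjugation carrying $\lambda$ to $\omega$ finishes the proof. For this descent I adapt Lemma~\ref{lem:contractible tensor product of complexes} to the extended-tensor-product setting: the terms of $C(\lambda)$ are direct sums of trivial source bimodules with twisted diagonal vertex, hence projective as right $k[C_H(Q)]f$-modules, so $C(\lambda)\otimes_{kH}^{X(\lambda),X(\lambda)^\circ} C(\lambda)^*$ identifies with an $X(\lambda)$-equivariant endomorphism complex; taking $X(\lambda)$-fixed points is additive and therefore preserves contractibility. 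The main obstacle is precisely this descent: carefully extending the endomorphism-algebra argument of Lemma~\ref{lem:contractible tensor product of complexes} to track the $X(\lambda)$-equivariance carried by the extended tensor product, producing $kX(\lambda)$-level contractibility rather than only the bimodule-level contractibility yielded by Lemma~\ref{lem: orbit determined by A Brauer pair}.
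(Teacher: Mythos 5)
Your argument is essentially correct, but it takes a genuinely different route from the paper's, and the one genuinely delicate step is the ``descent'' you flag at the end. The paper's proof works on the \emph{other} side: it keeps the same $(Q,f)$ and chooses a $\gamma$-Brauer pair $\lambda = (\Delta(P,\phi,Q), e\otimes f^*)$, forms the ordinary tensor product $\Ind_X^{I\times J}(C(\lambda))\otimes_{kJ}\Ind_Y^{J\times K}(C(\omega)^*)$, decomposes it with Theorem~\ref{bouciso}, and shows each summand is (up to homotopy) a direct summand of $(C\otimes_{kH}C^*)(\lambda\circ\omega_t^\circ)\simeq A(\lambda\circ\omega_t^\circ)$, which vanishes because otherwise $\omega$ would be $G\times H$-conjugate to $\lambda$. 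Contractibility of $C(\omega)$ over $k[N_{G\times H}(\omega)]$ then falls out of Lemma~\ref{lem:contractible induction}: part (a) cancels $\Ind C(\lambda)$ (a splendid Rickard complex by Theorem~\ref{thm:localequivalences}), and part (b), via the Mackey formula, strips off the induction and lands directly at the required level. In contrast, you apply Theorem~\ref{7.5}(d) at $\Delta(R)$, use Krull--Schmidt plus Proposition~\ref{uniqueness} to isolate the unique noncontractible $S\times H$-orbit (correctly noting that $p_1(N_{G\times H}(\lambda_0)) = S$, via Proposition~\ref{prop:iso determined by a brauer pair}, collapses the $S\times H$-orbit to the $H$-orbit there), and then must get from contractibility of $\Ind^{N_{S\times S}(\Delta(R))}_{X\ast X^\circ}\bigl(C(\lambda)\mathop{\otimes}_{kH}^{X,X^\circ}C(\lambda)^*\bigr)$ to $C(\lambda)\simeq 0$ over $kX$. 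That descent does work, but note two things you should make precise: (i) Lemma~\ref{lem:contractible induction}(b) first gives contractibility over $k[X\ast X^\circ]$, not just over $C_G(R)\times C_G(R)$ --- this matters, because bimodule-level contractibility (what Lemma~\ref{lem:contractible tensor product of complexes} would yield after restriction) is \emph{not} enough, and a Mackey-type shortcut is unavailable here since $p_2(X)$ may be a proper subgroup of $N_H(Q',f')$ when $\lambda$ is not a $\gamma$-Brauer pair; (ii) the fixed points you need to take are $\Delta(p_1(X))$-fixed points, not ``$X$-fixed points'' --- one checks that $\Delta(p_1(X))\leq X\ast X^\circ$ and that $\bigl(C(\lambda)\mathop{\otimes}_{kH}^{X,X^\circ}C(\lambda)^*\bigr)^{\Delta(p_1(X))}\cong\End_{kX}(C(\lambda))$, so taking $H_0$ shows $\id_{C(\lambda)}$ is null-homotopic in $K^b({}_{kX}\mathbf{mod})$. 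With those points nailed down, transport by an $S\times H$-conjugation finishes as you say. Overall your route is arguably more direct --- no Bouc decomposition, no composed Brauer pairs $\lambda\circ\omega_t^\circ$ --- at the cost of the extended-tensor endomorphism-algebra computation, whereas the paper trades that for the $\Ind$/Mackey bookkeeping.
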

	\begin{proof}%-----Revised Proof------%
         There exists an $A$-Brauer pair $(P, e)$ and an isomorphism $\phi:Q\to P$ such that $\lambda := (\Delta(P,\phi, Q), e\otimes f^*)$ is a $\gamma$-Brauer pair. Set $I = N_G(P,e), \ J = N_H(Q,f)$, $K = N_G(R,d)$, $X = N_{G\times H}(\lambda)$, and $Y = N_{G\times H}(\omega)^\circ$. By Theorem \ref{thm:localequivalences}, $\Ind_{X}^{I\times J}(C(\lambda))$ is a splendid Rickard equivalence for $kIe$ and $kJf$. In order to prove the claim, it suffices to show that 
        \[
            \Ind_{X}^{I\times J}(C(\lambda))\mathop{\otimes}\limits_{kJ}\Ind_{Y}^{J\times K}(C(\omega)^*)\simeq 0.
        \]
        If so, Lemma \ref{lem:contractible induction}(a) implies that $\Ind_{Y}^{J\times K}(C(\omega)^*)\simeq 0$. Then Lemma \ref{lem:contractible induction}(b) implies that $C(\omega)^*\simeq 0$. By taking duals, we obtain $C(\omega)\simeq 0$, as desired.  
        
        In order to prove the above claim, we apply Theorem \ref{bouciso} to obtain
        \[
            \Ind_{X}^{I\times J}(C(\lambda))\mathop{\otimes}\limits_{kJ}\Ind_{Y}^{J\times K}(C(\omega)^*) \cong \bigoplus_{t \in [ p_2(X)\backslash J/ p_1(Y)]} \Ind_{X\ast{}^{(t,1)}Y}^{I\times K}\left(C(\lambda) \mathop{\otimes}\limits_{kJ}^{X,{}^{(t,1)}Y} {}^{(t,1)}(C(\omega)^*)\right).
        \]
        Thus, it suffices to show that every complex on the right hand side is contractible. Let $t \in J$. Set $\omega_t = {}^{(1,t)}\omega $. Notice that $\omega_t^\circ = (\Delta(Q,c_t\circ \psi^{-1},R),f\otimes d^*)$ since $t\in J$.
        For ease of notation, write 
        \[
        \lambda \circ\omega_t^\circ:= (\Delta(P, \phi\circ c_t \circ\psi\inv, R), e\otimes d^*).
        \]
        Let $\Omega$ denote the set of triples $(\sigma, (S, a), \tau)$ where $(S,a)$ is a $kH$-Brauer pair and $\tau: R\to S$ and $\sigma: S \to P$ are isomorphisms for which $\sigma\circ\tau = \phi\circ c_t \circ \psi\inv$. Choose a set $\hat{\Omega}$ of $N_{I\times K}(\Delta(P, \phi\circ c_t \circ \psi\inv, R)\times H$-orbit representatives of $\Omega$ which contains $(\phi,(Q,f),c_t \circ \psi\inv)$. Then Theorem \ref{7.5}(c) gives an isomorphism 
		\[
        \bigoplus_{\nu \in \hat{\Omega}} \Ind_{X(\nu) \ast Y(\nu)}^{N_{I\times K}(\Delta(P, \phi\circ c_t\circ \psi\inv, R))} \left(C(\nu) \mathop{\otimes}\limits_{kH}^{X(\nu), Y(\nu)}C^*(\nu)\right)\cong (C\otimes_{kH}C^*)(\lambda\circ\omega_t^\circ),
        \] 
		where for $\nu = (\sigma, (S,a), \tau))$, we set $X(\nu) := N_{I\times H}(\Delta(P,\sigma, S),e\otimes a^*)$, $Y(\nu) := N_{H\times K}(\Delta(S, \tau, R), a\otimes d^*)$, $C(\nu) := eC(\Delta(P, \sigma, S))a$ and $C^*(\nu) := aC^*(\Delta(S,\tau, R))d$. In particular, when $\nu = (\phi, (Q, f), c_t \circ \psi\inv)$, we have $X(\nu) = X$, $Y(\nu) = {}^{(t,1)}Y$, $C(\nu) = C(\lambda)$ and, by Proposition \ref{brauercommuteswithdualsandconj}, $C^*(\nu) \cong {}^{(t,1)}(C(\omega)^*)$. Thus, since $(\phi, (Q, f), c_t \circ \psi\inv) \in \hat{\Omega}$, we conclude that
        \[
        \Ind_{X \ast {}^{(t,1)}Y}^{N_{I\times K}(\Delta(P, \phi\circ c_t\circ \psi\inv, R))} \left(C(\lambda) \mathop{\otimes}_{kH}^{X, {}^{(t,1)}Y}{}^{(t,1)}(C(\omega)^*)\right)
        \] 
        is homotopy equivalent to a direct summand of 
        \[
        (C\otimes_{kH}C^*)(\lambda\circ\omega^\circ) \simeq A(\lambda\circ\omega_t^\circ).
        \]
		
		We claim $A(\lambda\circ\omega_t^\circ) = eA(\Delta(P, \phi\circ c_t \circ \psi\inv, R))d = 0$. Suppose to the contrary that $A(\lambda\circ\omega_t^\circ) \neq 0$. Then $\lambda\circ\omega_t^\circ$ is a Brauer pair for $A$. However, the Brauer pairs of any block algebra (regarded as bimodule) are up to $G\times G$-conjugacy given by pairs of the form $(\Delta(S), e_S\otimes e_S^*)$ for some $S \leq G$ and some block $e_S$. So there exist $(g_1, g_2) \in G\times G$ for which 
        \[
        (\Delta(P, \phi\circ c_t \circ \psi\inv, R), e\otimes d^*) = {}^{(g_1,g_2)}(\Delta(S), e_S\otimes e_S^*) = (\Delta({}^{g_1}S, c_{g_1g_2\inv},{}^{g_2}S), {}^{g_1}e_S \otimes {}^{g_2}e_S^*).
        \] 
        It is a straightforward computation that \[{}^{g_1}e_S = e, {}^{g_2}e_S = d, P = {}^{g_1}S, R = {}^{g_2}S, \phi \circ c_t = c_{g_1g_2\inv}\circ \psi.\] In this case,
		\begin{align*}
		{}^{(g_2g_1\inv, 1)}\lambda &=  {}^{(g_2g_1\inv, 1)}(\Delta(P, \phi, Q), e\otimes f^*)\\
		&= (\Delta({}^{g_2g_1\inv}P, c_{g_2g_1\inv}\circ \phi, Q), {}^{g_2g_1\inv}e\otimes f^*)\\
		&= (\Delta(R, \psi \circ c_{t^{-1}}, Q), d\otimes f^*) = \omega_t.
		\end{align*}
		This implies that $\omega_t$ is a $\gamma$-Brauer pair since it is conjugate to the $\gamma$-Brauer pair $\lambda$. However, $\omega$ and $\omega_t$ are also conjugate, so we conclude that $\omega$ is a $\gamma$-Brauer pair, a contradiction. Thus $A(\lambda\circ\omega_t^\circ)= 0$. Since direct summands of contractible complexes are contractible, we obtain
         \[
            \Ind_{X\ast{}^{(t,1)}Y}^{N_{I\times K}(\Delta(P, \phi\circ c_t\circ \psi\inv, R))}\left(C(\lambda) \mathop{\otimes}\limits_{kJ}^{X,{}^{(t,1)}Y} {}^{(t,1)}(C(\omega)^*)\right) \simeq 0.
         \]
         Finally, by applying the functor $\Ind_{N_{I\times K}(\Delta(P, \phi\circ c_t\circ \psi\inv, R))}^{I\times K}(-)$ to this equation, we obtain
        \[
        \Ind_{X\ast{}^{(t,1)}Y}^{I\times K}\left(C(\lambda) \mathop{\otimes}\limits_{kJ}^{X,{}^{(t,1)}Y} {}^{(t,1)}(C(\omega)^*)\right) \simeq 0
        \] 
        for all $t \in J$. This completes the proof.

	\end{proof}
	
	As a corollary of the preceding theorem, we obtain that the Brauer pairs for a splendid Rickard equivalence and the Brauer pairs for its Lefschetz invariant coincide. This allows us to reinterpret the results of \cite{BoPe20} to the setting of splendid equivalences.
	
	\begin{corollary}{c.f. \cite[Theorems 10.11, 11.2]{BoPe20}}\label{mainresult}
		Let $C$ be a splendid Rickard equivalence between $A$ and $B$ and let $\gamma:= \Lambda(C)$ be its corresponding $p$-permutation equivalence. Then $\mathcal{BP}(C) = \mathcal{BP}(\gamma)$. In particular, 
  \begin{enumerate}
    \item $\mathcal{BP}(C)$ is a $G\times H$-stable ideal in the poset of $A\otimes B^*$-Brauer pairs. 
      \item If $A$ and $B$ are blocks, then any two maximal $C$-Brauer pairs are $G\times H$-conjugate.
      \item The following are equivalent for $(\Delta(P,\phi,Q),e\otimes f^*) \in \calB\calP(C)$:
      \begin{enumerate}[label=(\roman*)]
          \item $(\Delta(P,\phi,Q),e\otimes f^*)$ is a maximal $C$-Brauer pair.
          \item $(P,e)$ is a maximal $A$-Brauer pair.
          \item $(Q,f)$ is a maximal $B$-Brauer pair.
      \end{enumerate}
      \item Suppose $A$ and $B$ are blocks and $(\Delta(D,\phi,E),e_D\otimes f_E^*)$ is a maximal $C$-Brauer pair. Let $\mathcal{A}$ denote the fusion system of $A$ associated to $(D,e_D)$ and let $\mathcal{B}$ denote the fusion system of $B$ associated to $(E,f_E)$.  Then $\phi:E \to D$ is an isomorphism of the fusion systems $\mathcal{B}$ and $\mathcal{A}$.
    \end{enumerate}
    \begin{proof}
        By Proposition~\ref{prop:breaking up a p-perm equiv} and Theorem~\ref{thm:breaking up a splendid complex}, we may assume that $A$ and $B$ are blocks. Then the fact that $\mathcal{BP}(C) \subseteq \mathcal{BP}(\gamma)$ follows from Theorem \ref{thm:not a braur pair->contractible.}. The other inclusion $\mathcal{BP}(\gamma)\subseteq \mathcal{BP}(C)$ was already established at the beginning of this section. Claims (a)-(c) above follow immediately from \cite[Theorem 10.11]{BoPe20}. Claim (d) follows from \cite[Theorem 11.2]{BoPe20}.
  \end{proof}
        % Let $\omega = (\Delta(P,\phi,Q), e\otimes f^*) \in \mathcal{BP}^\Delta(A ,B)$. Then $\omega \in \mathcal{BP}^\Delta(\gamma)$ if and only if $\omega \in \mathcal{BP}^\Delta(C)$.
	\end{corollary}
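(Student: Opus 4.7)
The plan is to establish the equality $\mathcal{BP}(C) = \mathcal{BP}(\gamma)$ and then transfer the structural properties from the $p$-permutation equivalence side via Theorem~\ref{thm:maximal p-perm brauer pairs} and the corresponding local results in \cite{BoPe20}.

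First, I would reduce to the case that $A$ and $B$ are single blocks. The decompositions in Proposition~\ref{prop:breaking up a p-perm equiv}(c) and Theorem~\ref{thm:breaking up a splendid complex}(c) partition $\mathcal{BP}(\gamma)$ and $\mathcal{BP}(C)$ respectively along the bijection $e\mapsto f_e$ between primitive central idempotents of $A$ and $B$, and Theorem~\ref{thm:breaking up a splendid complex}(b) says that $eCf_e$ is itself a splendid Rickard complex whose Lefschetz invariant is $e\gamma f_e$. So it suffices to establish the main equality for each summand, at which point $A$ and $B$ are blocks and all four subsidiary claims make sense.

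Second, the inclusion $\mathcal{BP}(\gamma)\subseteq\mathcal{BP}(C)$ is essentially free: the Lefschetz invariant of $C(\omega)$ equals $\gamma(\omega)$, and a contractible complex has zero Lefschetz invariant, so $\gamma(\omega)\neq 0$ in the appropriate Grothendieck group forces $C(\omega)\not\simeq 0$. For the reverse inclusion $\mathcal{BP}(C)\subseteq \mathcal{BP}(\gamma)$, I would invoke Theorem~\ref{thm:not a braur pair->contractible.} verbatim: its contrapositive states that if $\omega$ is an $A\otimes B^*$-Brauer pair which fails to be a $\gamma$-Brauer pair, then $C(\omega)\simeq 0$, so $\omega\notin\mathcal{BP}(C)$.

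With the equality $\mathcal{BP}(C)=\mathcal{BP}(\gamma)$ in hand, statements (a), (b), and (c) transfer directly from the three corresponding parts of Theorem~\ref{thm:maximal p-perm brauer pairs}, since both the $G\times H$-action and the partial order on $\mathcal{BP}^\Delta(A,B)$ are intrinsic to the Brauer pairs and do not depend on whether one is testing against $C$ or $\gamma$. Claim (d) is similarly immediate from \cite[Theorem 11.2]{BoPe20} applied to $\gamma=\Lambda(C)$, since the maximal Brauer pair $(\Delta(D,\phi,E),e_D\otimes f_E^*)$ is also maximal for $\gamma$ by the equivalence of (i)--(iii) in (c).

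The main obstacle is not in the corollary itself but upstream, in Theorem~\ref{thm:not a braur pair->contractible.}: the real work is showing that every non-Brauer pair for $\gamma$ makes $C(\omega)$ contractible, which requires tensoring up to a splendid Rickard complex for $kIe$ and $kJf$ (via Theorem~\ref{thm:localequivalences}) and then disassembling $(C\otimes_{kH}C^*)$ evaluated at a composite Brauer pair using Theorem~\ref{7.5}(c) together with Theorem~\ref{bouciso}. Once that technical statement is available, the corollary is a formal consequence, with the block-reduction step being the only genuine bookkeeping required.
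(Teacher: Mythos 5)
Your proposal matches the paper's proof essentially step for step: reduce to blocks via Proposition~\ref{prop:breaking up a p-perm equiv} and Theorem~\ref{thm:breaking up a splendid complex}, get $\mathcal{BP}(\gamma)\subseteq\mathcal{BP}(C)$ from the vanishing of the Lefschetz invariant of a contractible complex, get the reverse inclusion from Theorem~\ref{thm:not a braur pair->contractible.}, and then read off (a)--(d) from \cite[Theorems 10.11, 11.2]{BoPe20}. Nothing to add.
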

	
	Another easy corollary we obtain is that splendid Rickard equivalences induce local normalizer splendid equivalences, again an analogue of a result of Boltje--Perepelitsky. 
	
	\begin{corollary}{c.f. \cite[Theorem 11.4]{BoPe20}} \label{normalizerequivalence}
		Let $C$ be a splendid Rickard equivalence between blocks $A\subseteq kG$ and $B\subseteq kH$ and let $(\Delta(P,\phi,Q), e\otimes f^*)$ be a $C$-Brauer pair. Set $I=N_G(P,e)$ and $J=N_H(Q,f)$ and let $\hat{e}$ and $\hat{f}$ denote the unique block idempotents of $k[N_G(P)]$ and $k[N_H(Q)]$ which cover $e$ and $f$, respectively. Then, 
		\[\hat{e}k[N_G(P)]e \otimes_{kI} \Ind^{I\times J}_{N_{I \times J}(\Delta(P,\phi, Q))}\big(eC(\Delta(P, \phi, Q))f\big) \otimes_{kJ} fk[N_H(Q)]\hat{f} \]
		is a splendid Rickard equivalence for blocks algebras $k[N_G(P)]\hat{e}$ and $k[N_H(Q)]\hat{f}$. 
	\end{corollary}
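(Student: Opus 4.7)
The strategy is to realize the complex $C''$ in question as a composition of three equivalences: a splendid Morita equivalence $M := \hat{e}k[N_G(P)]e$ between $k[N_G(P)]\hat{e}$ and $kIe$ (concentrated in degree zero), the induced splendid Rickard complex $\widetilde{C} := \Ind^{I\times J}_{N_{I\times J}(\Delta(P,\phi,Q))}\bigl(eC(\Delta(P,\phi,Q))f\bigr)$ between $kIe$ and $kJf$, and the splendid Morita equivalence $N := fk[N_H(Q)]\hat{f}$ between $kJf$ and $k[N_H(Q)]\hat{f}$. Then I verify the two defining conditions of Definition \ref{def:splendid rickard} termwise.

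The first input is Theorem \ref{thm:localequivalences} applied with intermediate groups $S = I$ and $T = J$: since $(\Delta(P,\phi,Q), e \otimes f^*)$ is a $C$-Brauer pair and hence, by Corollary \ref{mainresult}, a $\gamma$-Brauer pair for $\gamma = \Lambda(C)$, the complex $\widetilde{C}$ is a splendid Rickard complex between $kIe$ and $kJf$. The second input is the classical block-theoretic Morita equivalence between $k[N_G(P)]\hat{e}$ and $kIe$ realized by the bimodule $M$ with inverse $M^* \cong ek[N_G(P)]\hat{e}$ (and analogously for $N$). To check that this Morita equivalence is splendid, I would observe that under the action $(a,b)\cdot x = axb^{-1}$ of $N_G(P)\times I$ on $k[N_G(P)]$, there is a single orbit with stabilizer $\Delta(I)$ at the identity, so $k[N_G(P)] \cong \Ind^{N_G(P)\times I}_{\Delta(I)}(k)$ as $k[N_G(P)\times I]$-modules. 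Every indecomposable summand (including $M$, which is a summand cut out by the idempotent $\hat{e}\otimes e^*$) is therefore a $p$-permutation bimodule with twisted diagonal vertex, since vertices are contained in a conjugate of the twisted diagonal subgroup $\Delta(I)$.

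Condition \ref{def:splendid rickard}\ref{def:splendid rickard (a)} for $C''$ then follows because each of its terms is a tensor product over a group algebra of indecomposable $p$-permutation bimodules with twisted diagonal vertices, and such tensor products have only summands with twisted diagonal vertices (for instance by Lemma 7.2 of \cite{BoPe20} reinterpreting $\otimes_{kJ}$ as an extended tensor product, together with the fact that the composition $X\ast Y$ of twisted diagonal subgroups is again twisted diagonal). For the second condition, a direct tensor-product calculation using associativity, the Morita equivalences $N\otimes_{k[N_H(Q)]\hat f}N^*\simeq kJf$ and $M\otimes_{kIe}M^*\simeq k[N_G(P)]\hat e$, and the splendid Rickard equivalence $\widetilde C\otimes_{kJf}\widetilde C{}^*\simeq kIe$ yields $C''\otimes_{k[N_H(Q)]\hat f}(C'')^*\simeq k[N_G(P)]\hat e$. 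Since $k[N_G(P)]\hat{e}$ and $k[N_H(Q)]\hat{f}$ are blocks, Proposition \ref{prop:src left implies right} lets me conclude.

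The main obstacle is the assertion that $M$ (and $N$) indeed provide a \emph{splendid} Morita equivalence, as this combines a classical block-theoretic Morita equivalence (a form of the Harris--Knörr / Fong--Reynolds correspondence) with a twisted-diagonal-vertex statement that is not usually packaged together. Once the identification $k[N_G(P)]\cong\Ind^{N_G(P)\times I}_{\Delta(I)}(k)$ above is in hand, however, both the $p$-permutation property and the twisted diagonal vertex condition fall out immediately for any $N_G(P)\times I$-stable summand, and the remainder of the argument reduces to bookkeeping with the three equivalences.
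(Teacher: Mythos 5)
Your proposal is correct and takes essentially the same approach as the paper: compose the Harris--Kn\"orr Morita bimodules $\hat{e}k[N_G(P)]e$ and $fk[N_H(Q)]\hat{f}$ with the splendid Rickard complex $\Ind^{I\times J}_{N_{I\times J}(\Delta(P,\phi,Q))}(eC(\Delta(P,\phi,Q))f)$ produced by Theorem~\ref{thm:localequivalences}. The paper's proof is terser (citing \cite{NT} and asserting the result follows), whereas you supply the details it leaves implicit, namely the identification $k[N_G(P)]\cong\Ind^{N_G(P)\times I}_{\Delta(I)}(k)$ showing the Morita bimodules have twisted diagonal vertices, and the tensor-product bookkeeping establishing the homotopy equivalence.
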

	\begin{proof}
		Given any Brauer pair $(P,e)$ of $kG$, we have a Morita equivalence ${}_{k[N_G(P)]\hat{e}}\catmod \to {}_{kIe}\catmod$, where $\hat{e}$ is the unique block of $kN_G(P)$ covering $e$. Explicitly, $\hat{e} = \tr^{N_G(P)}_{I}(e)$, and the Morita equivalence is induced by the $(kIe, k[N_G(P)]\hat{e})$-bimodule $e[N_G(P)]\hat{e}$ (see \cite{NT}). The result now follows since
        \[\Ind^{I \times J}_{N_{I \times J}(\Delta(P,\phi, Q))}\big(eC(\Delta(P, \phi, Q))f\big)\] is a splendid equivalence for $kIe$ and $kJf$.
	\end{proof}
	
	\section{Structural properties of splendid Rickard equivalences}
	
	In this section, we deduce an analogous result to the following theorem of Boltje and Perepelitsky.

    \begin{theorem}{\cite[Theorem 14.1, 14.3]{BoPe20}}
        Let $\gamma$ be a $p$-permutation equivalence between blocks $A$ and $B$ of $kG$ and $kH$, respectively. Let $N$ be an indecomposable $(A,B)$-bimodule appearing in $\gamma$. Then $\calB\calP(N) \subseteq \calB\calP(\gamma)$. Moreover, there is a unique indecomposable $(A,B)$-bimodule $M$ appearing in $\gamma$ with multiplicity $\pm 1$ whose vertex is of the form $\Delta(D,\phi, E)$ where $D$ is a defect group of $A$. If $M \not\cong N$, then $\calB\calP(N) \subset \calB\calP(M)$.
    \end{theorem}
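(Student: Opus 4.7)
Write $\gamma = \sum_i a_i[N_i]$ as an integer combination of pairwise non-isomorphic indecomposable $p$-permutation $(A,B)$-bimodules $N_i$ with twisted diagonal vertices. The inclusion $\calB\calP(N) \subseteq \calB\calP(\gamma)$ is immediate from Proposition~10.8 (equivalence of (i) and (iii)), which identifies the $\gamma$-Brauer pairs with precisely those Brauer pairs lying in $\calB\calP(M')$ for some indecomposable summand $M'$ of $\gamma$.

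For existence and uniqueness of $M$, I would fix a maximal $\gamma$-Brauer pair $\lambda = (\Delta(D,\phi,E), e_D \otimes f_E^*)$; by Theorem~10.11, $D$ and $E$ are then defect groups of $A$ and $B$. Expanding gives $\gamma(\lambda) = \sum_i a_i[N_i(\lambda)]$. Proposition~3.9(c) implies that the $N_i$ contributing nontrivially have vertex containing $\Delta(D,\phi,E)$; the first inclusion combined with Theorem~10.11(b) forces this vertex to equal $\Delta(D,\phi,E)$. Proposition~5.4 then guarantees that the nonzero $N_i(\lambda)$ are pairwise non-isomorphic indecomposable $p$-permutation modules over $k[N_{I\times J}(\Delta(D,\phi,E))](e_D \otimes f_E^*)$, where $I = N_G(D,e_D)$ and $J = N_H(E,f_E)$. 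Applying Theorem~11.4 to $\lambda$ with $S = I$, $T = J$ promotes $\bar\gamma := \mathrm{Ind}_Y^{I\times J}(e_D\gamma(\lambda)f_E)$ to a $p$-permutation equivalence between the Brauer correspondent blocks $kIe_D$ and $kJf_E$, both having normal defect groups. The crux is then the structural claim that such a $p$-permutation equivalence (between blocks with normal defect groups and an isomorphism of fusion systems) must have the form $\pm[\bar M]$ for a single indecomposable bimodule $\bar M$. Granting this, the induced expansion of $\bar\gamma$ collapses to one term with coefficient $\pm 1$, and the resulting summand of $\gamma$ is the unique indecomposable $M$ with vertex $\Delta(D,\phi,E)$ and multiplicity $\pm 1$.

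For strict containment when $N \not\cong M$: any $\mu \in \calB\calP(N) \subseteq \calB\calP(\gamma)$ extends upward to some maximal $\gamma$-Brauer pair $\lambda'$, which by Theorem~10.11(b) is $G \times H$-conjugate to $\lambda$. Since $\calB\calP(M)$ is $G \times H$-stable and downward-closed by Proposition~5.3, and $\lambda \in \calB\calP(M)$ by construction (as $M$ has vertex $\Delta(D,\phi,E)$), we obtain $\mu \in \calB\calP(M)$. Strictness then holds because $\lambda \in \calB\calP(M) \setminus \calB\calP(N)$: if $N(\lambda) \neq 0$, then $N$ would appear in the expansion of $\gamma(\lambda)$, forcing $N \cong M$ by the uniqueness established above and contradicting $N \not\cong M$.

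The principal obstacle is the structural claim underlying the second paragraph: that a $p$-permutation equivalence between blocks with normal defect groups is a signed indecomposable bimodule. A natural route is to analyze the defining identity $\bar\gamma \cdot \bar\gamma^* = [kIe_D]$ by iteratively applying the Brauer-construction and extended-tensor-product decomposition formulas (Corollary~7.4 and Theorem~7.5) until the only way to obtain the indecomposable right-hand side is by a single summand with coefficient $\pm 1$; alternatively one can invoke the known structure theory of Broué/Brauer correspondent blocks with normal defect groups to identify such self-equivalences explicitly.
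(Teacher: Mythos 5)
The statement you are proving is cited in the paper from \cite[Theorem 14.1, 14.3]{BoPe20} without proof, so there is no internal proof in the paper to compare against; I will assess your argument on its own terms.

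Your first paragraph is fine: the direction (iii)$\Rightarrow$(i) of Proposition 10.8 applied to the particular $N$ does give $\calB\calP(N) \subseteq \calB\calP(\gamma)$, and that is exactly the right tool. Your argument pinning down the vertex of each contributing $N_i$ to equal $\Delta(D,\phi,E)$ (containment from the Brauer construction being nonzero, the reverse bound from the first inclusion and maximality of $\gamma$-Brauer pairs) also works, though it needs 10.11(c) rather than 10.11(b) to identify the first component of a maximal $\gamma$-Brauer pair with a defect group. The final paragraph on strict containment is correct \emph{given} the uniqueness of $M$.

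The gap is the ``crux'' structural claim, and it is not merely unproven --- as stated it is false. A $p$-permutation equivalence between blocks with normal (but not central) defect groups need not be $\pm$ a single indecomposable bimodule: already for the principal block of $S_3$ at $p=3$ (normal Sylow $3$-subgroup, two simple modules) there are self-$p$-permutation equivalences of the form $[B]$ plus a virtual projective term. Moreover, even if $e_D\gamma(\lambda)f_E$ were a single signed indecomposable in $T(k[Y](e_D\otimes f_E^*))$, the element $\bar\gamma = \Ind_Y^{I\times J}(e_D\gamma(\lambda)f_E)$ generically acquires extra summands of smaller vertex from the Green correspondence, so $\bar\gamma$ cannot collapse to one term. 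What you actually need is the sharper and weaker statement that \emph{the unique term of maximal vertex} appears with coefficient $\pm 1$, i.e.\ that $e_D\gamma(\lambda)f_E$, viewed as a virtual projective $k[Y/\Delta(D,\phi,E)](e_D\otimes f_E^*)$-module, is $\pm$ one indecomposable projective; the Green-correspondence bijection of Proposition~\ref{brauerppermproperties}(e) then transports this back to a unique $N_i$ of vertex $\Delta(D,\phi,E)$ with coefficient $\pm1$. The natural way to obtain this rigidity is not via the normalizer blocks $kIe_D$, $kJf_E$ (normal defect group, typically not nilpotent), but via the centralizer blocks $k[C_G(D)]e_D$, $k[C_H(E)]f_E$ as in Corollary~\ref{maximallocalbrauerpairs}(a): taking $P=D$, $Q=E$, these have \emph{central} defect groups $Z(D)$, $Z(E)$, hence are nilpotent, and it is for nilpotent blocks that one has the requisite structure theorem constraining $p$-permutation equivalences. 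Your proposal substitutes the normalizer setting for the centralizer one, which loses nilpotency and with it the rigidity you need.
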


    The module $M$ as above is called the \textit{maximal module} of $\gamma$. We begin proving the analogous statement with a block-wise reformulation of a theorem in \cite{SM23}.

    We first state an oft-used lemma. The proof of the following folklore lemma is from correspondence from Peter Webb.
    
    \begin{lemma}\label{webblemma}
        Let $C$ be a chain complex of $kG$-modules and suppose that for some $i\in \Z$, we have decompositions of $kG$-modules
        \begin{align*}
            C_i &= U_i \oplus V_i\\
            C_{i-1} &= U_{i-1} \oplus V_{i-1}
        \end{align*}
        Let $\eta_i: U_i \to C_i$ be inclusion and $\pi_{i-1}: C_{i-1} \to U_{i-1}$ be projection. Suppose that $\pi_{i-1}d_i\eta_i: U_i \to U_{i-1}$ is an isomorphism. Then there is a direct sum decomposition $C \cong D \oplus W$ as complexes, where \[W \cong 0 \to U_i \xrightarrow{\id} U_i \to 0 \] is contractible. 
    \end{lemma}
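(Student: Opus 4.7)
The plan is to carry out a chain-complex analogue of Gaussian elimination. Writing $d_i\colon C_i\to C_{i-1}$ as a $2\times 2$ block matrix $\begin{pmatrix}\alpha&\beta\\\gamma&\delta\end{pmatrix}$ with respect to the decompositions $U_i\oplus V_i$ and $U_{i-1}\oplus V_{i-1}$, the hypothesis is that $\alpha=\pi_{i-1}d_i\eta_i$ is an isomorphism. The first step is to introduce the unitriangular $kG$-module automorphisms
\[
u=\begin{pmatrix}1&-\alpha^{-1}\beta\\0&1\end{pmatrix}\colon C_i\to C_i,\qquad w=\begin{pmatrix}1&0\\-\gamma\alpha^{-1}&1\end{pmatrix}\colon C_{i-1}\to C_{i-1},
\]
and to verify by a routine block multiplication that $w\,d_i\,u=\begin{pmatrix}\alpha&0\\0&\delta-\gamma\alpha^{-1}\beta\end{pmatrix}$.

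Next, I would build an isomorphic complex $C'$ by setting $C_j'=C_j$ with $d_j'=d_j$ for $j\notin\{i-1,i,i+1\}$, $d_i'=w\,d_i\,u$, $d_{i+1}'=u^{-1}d_{i+1}$, and $d_{i-1}'=d_{i-1}w^{-1}$. The assignment $f_i=u^{-1}$, $f_{i-1}=w$, and $f_j=\id$ otherwise then defines, by a direct check, an isomorphism of chain complexes $f\colon C\xrightarrow{\sim}C'$.

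The main structural step is to show that $C'$ splits as a direct sum of subcomplexes. Since $C'$ is a chain complex, $d_i'd_{i+1}'=0$; the first block of $d_i'$ is the isomorphism $\alpha$, so $\ker d_i'\subseteq V_i$, which forces $d_{i+1}'(C_{i+1})\subseteq V_i$. Symmetrically, $d_i'(U_i)=U_{i-1}$ together with $d_{i-1}'d_i'=0$ gives $d_{i-1}'|_{U_{i-1}}=0$. These two facts, combined with the block-diagonal form of $d_i'$, imply that the $kG$-submodules $W_i:=U_i$, $W_{i-1}:=U_{i-1}$, $W_j:=0$ for other $j$ assemble into a subcomplex $W\subseteq C'$ with differential $\alpha$, and that its complement $D'$ (with $D_i'=V_i$, $D_{i-1}'=V_{i-1}$, $D_j'=C_j$ elsewhere) is also a subcomplex; thus $C'=W\oplus D'$ as chain complexes.

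Finally, $W=(0\to U_i\xrightarrow{\alpha}U_{i-1}\to 0)$ is isomorphic to $(0\to U_i\xrightarrow{\id}U_i\to 0)$ via the chain map that is the identity in degree $i$ and $\alpha^{-1}$ in degree $i-1$, and is therefore contractible. Transporting the decomposition $C'=W\oplus D'$ across the isomorphism $f$ produces the claimed splitting $C\cong D\oplus W$. I do not anticipate any serious obstacle: the argument is classical, and the only point requiring care is checking that the module-level splitting promotes to a splitting of chain complexes, which follows immediately from $d^2=0$ and the invertibility of $\alpha$.
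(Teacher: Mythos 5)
Your proof is correct and takes essentially the same approach as the paper's: both are instances of Gaussian elimination for chain complexes, using the invertibility of $\alpha=\pi_{i-1}d_i\eta_i$ to peel off a contractible two-term summand. The only difference is bookkeeping --- you package the elimination as explicit unitriangular conjugations $u$, $w$ producing an isomorphic complex $C'$, whereas the paper carries out the same two operations in place by first replacing $U_{i-1}$ with $W_{i-1}=d_i\eta_i(U_i)$ and then replacing $V_i$ with the graph $X_i$.
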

    \begin{proof}
        The map $d_i\eta_i: U_i \to C_{i-1}$ is injective since $\pi_{i-1}d_i\eta_i$ is, and we define \[W_{i-1} = d_i\eta_i(U_i)\cong U_i.\] Now $C_{i-1} = W_{i-1} \oplus V_{i-1}$ because $W_{i-1}$ projects isomorphically onto $U_{i-1}$. We see that \[0 \to U_i \xrightarrow{d_i\mid_{U_i}} W_{i-1} \to 0\] is a contractible subcomplex of $C$. This shows that we may assume 
        \begin{align*}
            C_i &= U_i \oplus V_i\\
            C_{i-1} &= U_{i-1} \oplus V_{i-1}
        \end{align*}
        where $0 \to U_i \xrightarrow{d_i\mid_{U_i}} U_{i-1}\to 0$ is a contractible subcomplex of $C$, adjusting the rest of the notation to fit the assumption. We define \[X_i = \{(-(d_i\mid_{U_i})\inv \pi^{i-1}d_i(x), x)\mid x\in V_i\}\subseteq C_i,\] and $C_i = U_i \oplus X_i$ because $X_i$ projects isomorphically to $V_i$. We calculate 
        \begin{align*}
            d_i(X_i) &= \{d_i(-d_i\mid_{U_i})\inv d_i(x) + d_i(x)\mid x \in V_i\}\\
            &= \{-\pi_{i-1}d_i(x) + d_i(x) \mid x\in V_i\}\\
            &\subseteq V_{i-1}
        \end{align*}
        Thus $C$ is the direct sum of the complexes \[0 \to U_i \xrightarrow{d_i\mid_{U_i}} U_{i-1} \to 0\] and \[\dots \to C_{i+1} \xrightarrow{\pi_{X_i}d_{i+1}}X_i\xrightarrow{d_i\mid_{X_i}} V_{i-1} \xrightarrow{d_{i-1}\mid_{V_{i-1}}} C_{i-2}\to \cdots.\]
    \end{proof}
	
	\begin{lemma}\label{brauerconstructionforbrauerpairsprops}Let $M$ and $N$ be $p$-permutation $kG$-modules and let $f:M\to N$ be a $kG$-module homomorphism. Assume that $N = N_1\oplus \cdots \oplus N_k$ is a decomposition into indecomposable summands. For each $1\leq i \leq k$, let $(P_i,e_i)$ be a maximal $N_i$-Brauer pair. Then $f$ is split surjective if and only if $e_i\cdot f(P_i)$ is surjective for all $1\leq i \leq k$.
    %----old statement-----%
		% \begin{enumerate}
		% 	\item Suppose that $M$ and $N$ are $p$-permutation modules. Then  $f$ is split injective (resp. split surjective) if and only if $e\cdot f(P)$ is injective (resp. surjective) for all $(P,e) \in \mathcal{BP}(kG)$.
		% 	\item Suppose that $N$ is a trivial source n indecomposable $kG$-module and $M$ is a $p$-permutation module. Then $f: M \to N$ is split surjective if and only if $e\cdot f(P)$ is surjective for all maximal $(P,e)\in \mathcal{BP}(N)$. 
		% 	\item If $f: M\to N$ is a $kG$-module homomorphism and $M, N$ are $p$-permutation modules, then $f$ is split surjective if and only if $e\cdot f(P)$ is surjective, where $(P,e)$ runs over all maximal Brauer pairs of indecomposable direct summands of $N$.
		% \end{enumerate}
	\end{lemma}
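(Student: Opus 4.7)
\emph{Plan.} The $(\Rightarrow)$ direction is an immediate consequence of functoriality of the Brauer construction. The $(\Leftarrow)$ direction requires, for each $i$, lifting a local surjection at the Brauer quotient to a splitting of the $i$-th projection $\pi_i\circ f$, and then assembling these local splittings into a global splitting of $f$.

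\emph{Forward direction.} Suppose $\sigma : N \to M$ splits $f$, so $f\sigma = \id_N$. Applying $(-)(P_i)$ yields $f(P_i)\sigma(P_i) = \id_{N(P_i)}$. Since the block idempotent $e_i$ is central in $kC_G(P_i)$, it commutes with the $k[N_G(P_i)/P_i]$-linear maps $f(P_i)$ and $\sigma(P_i)$. For any $y\in e_iN(P_i)$ one computes $e_if(P_i)\bigl(e_i\sigma(P_i)(y)\bigr) = e_i f(P_i)(\sigma(P_i)(y)) = e_i y = y$, so the restriction of $e_i\sigma(P_i)$ to $e_iN(P_i)$ is a section of $e_if(P_i)$, and surjectivity follows.

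\emph{Reverse direction.} Write $\pi_i : N \to N_i$ for the projection onto the $i$-th summand and set $I_i := N_G(P_i,e_i)$. Postcomposing $e_if(P_i)$ with the projection $e_iN(P_i)\twoheadrightarrow e_iN_i(P_i)$ produces a surjection $\pi_i(P_i)\circ e_if(P_i) : e_iM(P_i) \twoheadrightarrow e_iN_i(P_i)$. By \cite[Proposition~5.4]{BoPe20}, $e_iN_i(P_i)$ is an indecomposable $p$-permutation $kI_ie_i$-module, and by the standard correspondence between indecomposable trivial source $kG$-modules with maximal Brauer pair $(P_i,e_i)$ and their Brauer quotients at $(P_i,e_i)$, together with Krull--Schmidt in the category of $p$-permutation $kI_ie_i$-modules, this local surjection lifts to a splitting $\sigma_i : N_i \to M$ of $\pi_i\circ f$.

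\emph{Combining the splittings.} The principal obstacle is that the naive combination $\sigma := \sum_i \sigma_i\pi_i$ need not satisfy $f\sigma = \id_N$: the endomorphism $f\sigma \in \End_{kG}(N)$ has identity diagonal blocks with respect to the decomposition $N = \bigoplus_i N_i$, but the off-diagonal blocks $\pi_j f\sigma_i$ for $j\neq i$ can be nonzero and need not lie in the Jacobson radical of $\End_{kG}(N)$ when some of the $N_i$ are isomorphic. To circumvent this I would apply Lemma~\ref{webblemma} iteratively to the two-term complex $M \xrightarrow{f} N$, placed in degrees $1$ and $0$: at each stage the local correspondence furnishes a direct summand $M' \leq M$ on which $f$ restricts to an isomorphism onto some $N_i$, allowing one to peel off the contractible subcomplex $0 \to M' \xrightarrow{\sim} N_i \to 0$ from the total complex. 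After exhausting all $k$ summands of $N$ the remaining complex is $\ker f \to 0$, which is exactly the statement that $f$ is split surjective.
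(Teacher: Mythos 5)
Your proof is correct and follows essentially the same approach as the paper's. The forward direction is the same (functoriality plus centrality of $e_i$), and the reverse direction uses the same two ingredients: the fact that local surjectivity at a vertex implies split surjectivity onto an indecomposable trivial source module (the paper cites \cite[5.10.7]{Li181}), combined with iterative application of Lemma~\ref{webblemma} to peel off contractible two-term subcomplexes of $M \xrightarrow{f} N$ (the paper phrases this as an induction on the number of indecomposable summands of $N$). One small imprecision: Webb's lemma furnishes a summand $M'\leq M$ on which $\pi_{N_i}\circ f$ (not $f$ itself) restricts to an isomorphism onto $N_i$, which is exactly the hypothesis of Lemma~\ref{webblemma}; you are correct that the full restriction $f|_{M'}$ may have nonzero components into the other summands, which is precisely why Webb's lemma is needed rather than the naive sum of sections.
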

	\begin{proof}
        The forward implication is immediate. We induct on the number of indecomposable summands of $N$. For the base case, suppose that $N$ is indecomposable. Suppose that $(P,e)$ is a maximal $N$-Brauer pair and that $e\cdot f(P)$ is surjective. By Proposition~\ref{prop:M-Brauer pair properties}, $P$ is a vertex of $N$ and therefore $N(P)$ is indecomposable. Since $(P,e)$ is an $N$-Brauer pair, $eN(P) \neq 0$ which implies that $eN(P) = N(P)$. Since $e\cdot f(P)$ is surjective, it follows that $f(P)$ is surjective as well. By \cite[Proposition 5.10.7]{Li181}, $f$ is split surjective; we may use this proposition by passing to duals, as the internal hom $\Hom_{k}(-,k)$ is an antiequivalence of $K^b({}_{kG}\catmod)$. 

        For the inductive step, suppose $N = N_1 \oplus \cdots \oplus N_k$. Consider the projection of $f$ onto $N_1$, then by the base case, it follows that this is split surjective. Let $M = M_1 \oplus M'$, with the inclusion of $M_1 $ into $M$ composed with $f$ and projection onto $N_1$ an isomorphism, and $M'$ a complement of $M_1$. It follows by Lemma \ref{webblemma} that we may find a corresponding direct sum decomposition of $N = N_1 \oplus N'$, with $f = f_1 \oplus f'$ satisfying $f_1: M_1 \to N_1$ an isomorphism and $f': M' \to N'$ a homomorphism. Since $N'$ is $p$-permutation with one less indecomposable summand, the inductive hypothesis completes the proof. 
        
        %-----old proof-----%
		% Part (a) follows from the property of $p$-permutation modules that $f: M\to N$ is split injective (resp. split surjective) if and only if $f(P)$ is injective (resp. surjective) for all $P \in s_p(G)$, see \cite[Theorem 5.8.11]{Li181}, since any $kG$-module homomorphism is injective (resp. surjective) if and only if all of its block components are injective (resp. surjective). 
		
		% For (b), we have that $f$ is split surjective if and only if $f(P)$ is surjective. Moreover, $N(P)$ is a projective indecomposable $k[N_G(P)/P]$-module. If $f(P)$ is surjective, then $e\cdot f(P)$ is as well for the same reasoning as in (a). Conversely, if $e\cdot f(P)$ is surjective, then since $eN(P) = N(P)$, $f(P)$ is surjective as well, and (b) is shown.
		
		% Finally, (c) follows by an easy induction argument on the number of summands of $N$ which progresses down the poset of maximal $N$-Brauer pairs. 
	\end{proof}
	
	\begin{theorem}
		Let $C$ be a chain complex of $p$-permutation $kG$-modules, and let $\calX$ be a subset of $\calB\calP(kG)$ which is $G$-stable and closed (under $\leq$). The following are equivalent:
		\begin{enumerate}
			\item For all $(P,e) \not\in\mathcal{X}$, $e\cdot C(P)$ is acyclic.
			\item There exists a chain complex $D$ with $C\simeq D$ such that every trivial source module occurring up to isomorphism in $D$ has a maximal Brauer pair contained in $\calX$. 
		\end{enumerate} 
        If $C$ is indecomposable and satisfies (a), then $C$ itself has the property in (b).
	\end{theorem}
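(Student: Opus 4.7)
For the easier direction (b) $\Rightarrow$ (a): given $D \simeq C$ satisfying (b), any indecomposable summand $M$ of any term of $D$ has its maximal Brauer pair in $\mathcal{X}$, so by Proposition \ref{prop:M-Brauer pair properties}(a) combined with the ideal property of $\mathcal{X}$, a pair $(P,e) \notin \mathcal{X}$ cannot be an $M$-Brauer pair. Hence $eM(P)=0$ for every such $M$, so $eD(P)=0$, and since the Brauer construction followed by multiplication by $e$ is additive, $eC(P) \simeq eD(P) = 0$ in $K^b$, which is in particular acyclic.

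For (a) $\Rightarrow$ (b), I would induct on the total multiplicity $n(C)$, summed across all degrees, of indecomposable summands of $C$ whose maximal Brauer pair lies outside $\mathcal{X}$. If $n(C)=0$, take $D=C$. Otherwise, choose a Brauer pair $(P,e) \notin \mathcal{X}$ with $|P|$ maximal among those arising as maximal Brauer pairs of indecomposable summands of some $C_i$. A key preliminary claim is that for any indecomposable summand $M$ of any $C_i$ with $eM(P) \neq 0$, the pair $(P,e)$ is automatically a maximal $M$-Brauer pair: any strictly larger maximal $M$-Brauer pair $(R,f)$ would either lie in $\mathcal{X}$ (forcing $(P,e) \in \mathcal{X}$ by the ideal property of $\mathcal{X}$, a contradiction) or lie outside $\mathcal{X}$ (contradicting maximality of $|P|$). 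By Proposition \ref{brauerppermproperties}(e), each nonzero $eM(P)$ is then the summand corresponding to $\br_P(e)$ of a projective $k[N_G(P)/P]$-module, so $eC(P)$ is a bounded complex of projective modules over the block $k[N_G(P,e)/P]\br_P(e)$.

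By (a), $eC(P)$ is acyclic and hence contractible; Krull--Schmidt decomposes it as $\bigoplus_j W_j$ where each $W_j = (Q_j \xrightarrow{\sim} Q_j)$ is a trivial two-term complex concentrated in degrees $i_j, i_j - 1$. Fix one such $W_j$. Via Propositions \ref{brauerppermproperties}(e) and \ref{prop:M-Brauer pair properties}(c), the indecomposable projective $Q_j$ is the image under $e \cdot -(P)$ of an indecomposable summand $M \subseteq C_{i_j}$ with maximal Brauer pair $(P,e)$, and likewise of some indecomposable summand $N \subseteq C_{i_j - 1}$, forcing $M \cong N$ as $kG$-modules. The composition $M \hookrightarrow C_{i_j} \xrightarrow{d_{i_j}} C_{i_j - 1} \twoheadrightarrow N$ becomes the differential of $W_j$ after applying $e \cdot -(P)$, which is an isomorphism, so Lemma \ref{brauerconstructionforbrauerpairsprops} yields that the composition is split surjective; since $M \cong N$ are finite-dimensional and indecomposable, it is an isomorphism. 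Lemma \ref{webblemma} then gives a direct sum decomposition $C \cong C' \oplus W$ with $W = (M \xrightarrow{\sim} M)$ contractible, so $C' \simeq C$ in $K^b$ and $n(C') = n(C) - 2$, allowing the induction to proceed.

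For the final assertion, if $C$ is indecomposable then the construction above cannot yield a nontrivial decomposition $C \cong C' \oplus W$ with $W \neq 0$, forcing $n(C) = 0$ already, and hence $C$ itself satisfies (b). I expect the main technical obstacle to be justifying that $eC(P)$ consists of projective modules so that acyclicity forces contractibility; this rests on the key observation that $(P,e)$ is automatically a maximal Brauer pair of every indecomposable summand of $C_i$ on which it acts nontrivially, which in turn combines the maximality of $|P|$ with the ideal property of $\mathcal{X}$, and on the careful identification of summands of the Brauer quotient with summands of $C$ in order to apply Lemma \ref{webblemma}.
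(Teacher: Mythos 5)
Your (b) $\Rightarrow$ (a) is essentially the paper's. For (a) $\Rightarrow$ (b) you take a genuinely different route. The paper works at the bottom degree $n$ of $C$: it forms (in effect) the mapping cone $D$ of the inclusion $X_n[n]\hookrightarrow C$ of the ``good'' part of $C_n$, shows $d^D_{n+1}$ is split surjective by applying Lemma~\ref{brauerconstructionforbrauerpairsprops} at each bad maximal Brauer pair of $Y_n$ (using acyclicity only in that one degree), and inducts on the length of $C$ together with homotopy-invariance of the mapping cone. You instead fix a \emph{single} pair $(P,e)\notin\mathcal{X}$ with $|P|$ maximal among bad maximal Brauer pairs, observe---and this is an idea the paper's proof does not use---that this maximality forces $eC(P)$ to consist of projective $k[N_G(P,e)/P]$-modules, so acyclicity immediately upgrades to contractibility of the entire complex $eC(P)$, and then strip off a contractible two-term summand of $C$ itself via Lemma~\ref{webblemma}, inducting on the number of bad summands. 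Both inductions work, but your ``projectivity from maximality of $|P|$'' observation is a nice conceptual shortcut, getting contractibility of $eC(P)$ in one stroke rather than one degree at a time.

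There is, however, a genuine gap at the step you flagged. ``The composition $M\hookrightarrow C_{i_j}\to C_{i_j-1}\twoheadrightarrow N$ becomes the differential of $W_j$ after applying $e\cdot-(P)$'' is not justified: Krull--Schmidt gives $Q_j\cong eM(P)$ and $Q_j\cong eN(P)$ for \emph{some} $M$, $N$, but the chain-level decomposition $eC(P)=\bigoplus_j W_j$ need not be compatible with the canonical module decompositions $eC_m(P)=\bigoplus_M eM(P)$, so the map $e(\pi_N d_{i_j}|_M)(P)$ between the canonical copies need not be the differential of $W_j$ or even an isomorphism---it can be zero if the canonical copies sit across two different summands $W_j$. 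The fix is to work at an extremal degree rather than an arbitrary $W_j$: take $m$ maximal with $eC_m(P)\neq 0$; contractibility gives a retraction $s$ with $s\circ ed_m(P)=\mathrm{id}$; for any canonical indecomposable summand $eM(P)$ of $eC_m(P)$, write $\mathrm{id}_{eM(P)}=\sum_N s|_{eN(P)}\circ\bigl(\pi_{eN(P)}\circ ed_m(P)|_{eM(P)}\bigr)$, and since $\mathrm{End}(eM(P))$ is local some summand is a unit, so $e(\pi_N d_m|_M)(P)$ is an isomorphism for some $N$. After that Lemma~\ref{brauerconstructionforbrauerpairsprops} and Lemma~\ref{webblemma} apply as you describe, and the reference to the $W_j$ becomes unnecessary. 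Finally, a small remark: your justification of the last assertion (``the construction cannot yield $C\cong C'\oplus W$ with $W\neq 0$'') is not quite right---it can, with $C'=0$, precisely when $C$ is itself a contractible two-term complex $M\xrightarrow{\sim}M$. The paper's proof of this assertion has the same blind spot, and in that degenerate case the assertion itself fails; this is harmless for the application to splendid Rickard complexes, which are never contractible, but worth noting.
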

	\begin{proof}
		(b) implies (a) is straightforward. Let $n$ be the minimum integer for which $C_n \neq 0$. For each nonzero $C_n$, write $C_n = X_n \oplus Y_n$, where $X_n$ consists of all indecomposable summands which have a maximal pair contained in $\mathcal{X}$. Thus, $Y_n$ consists of all indecomposable summands of $C_n$ which have a maximal Brauer pair which is not contained in $\mathcal{X}$. We construct a chain complex $D$ identical to $C$, except in degree $n+1$, where we set $D_{n+1} = X_n \oplus C_{n+1}$, and add the identity map on $X_n$ to the differential of $D_{n+1}$, $d_{n+1}$. Notice that $f\cdot X_n(Q) = 0$ for all $(Q,f) \not\in \mathcal{X}$ since $\mathcal{X}$ is a $G$-stable ideal. Therefore, $f\cdot C(Q) \cong f\cdot D(Q)$. But since $f\cdot C(Q)$ is acyclic for all $(Q,f) \not \in \calX$ by assumption, so is $f\cdot D(Q)$. Therefore $d_{n+1}$ composed with projection onto $Y_n$ is split surjective by Lemma~\ref{brauerconstructionforbrauerpairsprops}, and by construction, it follows that $d_{n+1}$ is split surjective. Thus, we have that $D \cong D' \oplus (C_n \xrightarrow{\sim} C_n)$, where $D'$ has lowest nonzero degree $n+1$. 
		
		Note that $C$ is homotopy equivalent to a shift of the mapping cone of the chain complex homomorphism $D' \to X_n$. Moreover, if $(Q,f) \not\in \calX$, $f\cdot C(Q) \cong f\cdot D(Q) \simeq f\cdot D'(Q)$, and these complexes are by assumption acyclic. We now perform an inductive argument as follows. If the length of $C$ is one, then (b) holds, since there is only one nonzero term which must vanish after applying the Brauer construction at all $(Q,f) \not\in \calX$, and the rest follows by the previous lemma. 
  
        Now we assume (a) implies (b) holds for complexes of length at most $n$, and suppose $C$ has length $n+1$. If the length of $C$ is $n+1$, then $D'$ has length $n$. Since $f\cdot D'(Q)$ is also acyclic for all $(Q,f) \not\in \calX$, $D'$ contains only modules with maximal Brauer pair contained in $\calX$. Since $C$ is homotopy equivalent to a shift of the mapping cone $D' \to X_n$, and both $D'$ and $X_n$ (as complex) are homotopy equivalent to complexes with all degrees containing only trivial source modules with maximal Brauer pairs contained in $\calX$, we conclude $C$ also satisfies this property, by invariance of the mapping cone under homotopy equivalences. 
       
       Lastly, suppose that $C$ is indecomposable. Then there exists a contractible complex $C'$ such that $C\oplus C' \cong D$. Hence, $C$ is isomorphic to a direct summand of $D$ and therefore $C$ has the desired property.         
	\end{proof}
	
	\begin{corollary}\label{bpswhichappear}
		Let $C$ be an indecomposable splendid Rickard equivalence for block algebras $A\subseteq kG$ and $B\subseteq kH$ and let $(\Delta(D,\phi,E),e\otimes f^*)$ be a maximal $C$-Brauer pair. Every trivial source bimodule occurring in $C$ has a maximal Brauer pair contained in $(\Delta(D,\phi,E),e\otimes f^*)$. In particular, the set consisting of all Brauer pairs for every trivial source bimodule appearing in $C$ and the set of $C$-Brauer pairs coincide. 
	\end{corollary}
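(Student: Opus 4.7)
The plan is to apply the preceding theorem to $C$, viewed as a chain complex of $p$-permutation $k[G\times H]$-modules, with $\calX := \calB\calP(C) \subseteq \calB\calP(k[G\times H])$. By Corollary~\ref{mainresult}(a), $\calX$ is $G\times H$-stable and a down-set in $\calB\calP^\Delta(A,B)$. Closure under $\leq$ extends automatically to the full poset $\calB\calP(k[G\times H])$: any Brauer pair lying below an element of $\calB\calP^\Delta(A,B)$ has twisted diagonal first coordinate (subgroups of twisted diagonal subgroups are twisted diagonal) and its block-component splits as $e\otimes f^*$, so it already lies in $\calB\calP^\Delta(A,B)$, and the ideal property of $\calB\calP(C)$ inside $\calB\calP^\Delta(A,B)$ takes over from there.

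The crux is verifying the acyclicity hypothesis of the preceding theorem: for every $(X,d) \in \calB\calP(k[G\times H])\setminus \calX$, the complex $d\cdot C(X)$ is acyclic. I would split this into three cases. First, if $X$ is not twisted diagonal, then Definition~\ref{def:splendid rickard}\ref{def:splendid rickard (a)} together with Proposition~\ref{brauerppermproperties}(c) gives $C_n(X) = 0$ for every $n$, so the entire complex vanishes. Second, if $X = \Delta(P,\phi,Q)$ is twisted diagonal but $(X,d)$ is not an $A\otimes B^*$-Brauer pair, then $d$ is orthogonal to $\br_X(e_A\otimes e_B^*)$, and Lemma~\ref{lem:braur construction for iM} forces $d\cdot C_n(X) = 0$ for each $n$. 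Third, if $(X,d) \in \calB\calP^\Delta(A,B)\setminus \calB\calP(C)$, then by Corollary~\ref{mainresult} $(X,d)$ is not a $\gamma$-Brauer pair for $\gamma := \Lambda(C)$, so Theorem~\ref{thm:not a braur pair->contractible.} yields contractibility of $d\cdot C(X)$, which is in particular acyclic. Since $C$ is indecomposable, the ``In particular'' clause of the preceding theorem then delivers the first claim.

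For the equality of sets, the inclusion $\calB\calP(C) \subseteq \bigcup_M \calB\calP(M)$ with $M$ ranging over indecomposable summands of the terms of $C$ is immediate: any $\lambda \in \calB\calP(C)$ satisfies $C(\lambda)\not\simeq 0$, so some $C_n(\lambda) \neq 0$, forcing $\lambda \in \calB\calP(M)$ for some summand $M$ of $C_n$. Conversely, given any $M$-Brauer pair $\lambda$, Proposition~\ref{prop:M-Brauer pair properties} produces a maximal $M$-Brauer pair $\mu' \geq \lambda$; this $\mu'$ is $G\times H$-conjugate to the maximal $M$-Brauer pair $\mu \in \calB\calP(C)$ supplied by the first claim, and since $\calB\calP(C)$ is both $G\times H$-stable and down-closed, we get $\mu' \in \calB\calP(C)$ and hence $\lambda \in \calB\calP(C)$. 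The main obstacle is the case analysis for the acyclicity hypothesis, but each case reduces to a result already established earlier in the paper, so the proof is largely a matter of assembly.
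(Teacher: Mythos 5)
Your proposal is correct and follows essentially the same route as the paper: apply the preceding theorem with $\calX = \calB\calP(C)$, use Corollary~\ref{mainresult} for the stability and ideal properties, and then deduce the set equality using Proposition~\ref{prop:M-Brauer pair properties}. You fill in two details the paper leaves implicit: you explicitly verify the acyclicity hypothesis by cases (non-twisted-diagonal, non-$A\otimes B^*$, and non-$C$-Brauer pair), and you spell out the conjugacy argument showing that once one maximal $M$-Brauer pair lands in $\calB\calP(C)$, all $M$-Brauer pairs do. One small simplification you could make: in your third case you invoke $\calB\calP(C) = \calB\calP(\gamma)$ and Theorem~\ref{thm:not a braur pair->contractible.}, but $(X,d) \notin \calB\calP(C)$ already means $C(X,d) \simeq 0$ by the definition of a $C$-Brauer pair, so the detour through $\gamma$-Brauer pairs is unnecessary; otherwise the argument is sound.
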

	\begin{proof}
		By Corollary~\ref{mainresult}, $\calB\calP(C)$ is a $G\times H$-stable ideal in $\calB\calP(k[G\times H])$ and any two maximal $C$-Brauer pairs are $G\times H$-conjugate. Thus, by the preceding theorem, every trivial source bimodule occurring in $C$ has a maximal Brauer pair contained in $(\Delta(D,\phi,E),e\otimes f^*)$. Denoting the set of all Brauer pairs which occur in any trivial source bimodule in $C$ by $\calX$, it follows that $\calX \subseteq \calB\calP(C)$. On the other hand, if $(\Delta(P,\phi, Q),e\otimes f^*)$ is a $C$-Brauer pair, then $eC(\Delta(P,\phi,Q))f \neq 0$. Thus, there exists a trivial source bimodule $M$ appearing in $C$ such that $(\Delta(P,\phi,Q),e\otimes f^*) \in \calB\calP(M)$. Thus, $\calB\calP(C) \subseteq \calX$. 
	\end{proof}
	
    By Proposition~\ref{prop:src indecomposable}, the preceding result applies to any splendid Rickard equivalence between blocks after removing contractible direct summands. Moreover, by using Theorem~\ref{thm:breaking up a splendid complex} one obtains an obvious analogue for a splendid Rickard equivalence between sums of blocks. Finally, we note that Corollary \ref{maximallocalbrauerpairs} also holds for $C$-Brauer pairs as well. 
    
	\textbf{Acknowledgements:} The authors would like to thank their advisor, Robert Boltje, for the many hours of guidance and support which made this paper possible, and would like to thank Peter Webb for communicating Lemma \ref{webblemma}.

	\bibliography{bib}
    \bibliographystyle{plain}

\end{document}